\newtheorem{theorem}{Theorem}[section]
\newtheorem{proposition}[theorem]{Proposition}
\newtheorem{lemma}[theorem]{Lemma}
\newtheorem{corollary}[theorem]{Corollary}
\theoremstyle{definition}
\newtheorem{remark}[theorem]{Remark}
\newtheorem{question}[theorem]{Question}
\newtheorem{definition}[theorem]{Definition}
\newtheorem{example}[theorem]{Example}
\newcommand{\HP}{\hat P}
\newcommand{\Pt}{P_\tau}
\newcommand{\M}{{\mathcal M}}
\newcommand{\A}{{\mathcal A}}
\def\P{{\mathcal P}}
\newcommand{\F}{{\mathcal F}}
\newcommand{\G}{{\mathcal G}}
\newcommand{\N}{{\mathcal N}}
\newcommand{\IN}{{\Bbb N}}
\newcommand{\IR}{{\Bbb R}}
\newcommand{\Comp}{{\mathcal C}omp}
\newcommand{\Tych}{{\mathcal T}ych}
\newcommand{\supp}{\operatorname{supp}}
\newcommand{\id}{\operatorname{id}}
\newcommand{\pr}{\operatorname{pr}}
\newcommand{\oplu}{\operatornamewithlimits{\otimes}}
\newcommand{\Id}{\operatorname{Id}}
\newcommand{\e}{\varepsilon}
\newcommand{\bs}{\setminus}
\newcommand{\In}{{\mathbf n}}
\newcommand{\invlim}{\varprojlim}
\begin{document}

\title{The topology of spaces of probability measures, I:\newline
Functors $\P_\tau$ and $\HP$.}
\shorttitle{The topology of spaces of probability measures, I}
\author{Taras Banakh}
\address{Faculty of Mechanics and Mathematics, Ivan Franko National University of L'viv, 1 Universytets'ka str., Lviv, Ukraine, 290602}

\subjclass{18B30, 28A33, 28C15, 54B30, 54H05}
\UDC{515.12}

\msyear{1995}
\msvolume{5}
\msnumber{1-2}
\mspages{65--87}
\received{01.10.1994}
\revised{24.01.1995}
\pageno{65}

\ruabstract{Т.Банах}{Топология пространств вероятностных мер, I:
Функторы $\P_\tau$ и $\HP$.}%
{}

\enabstract{T.Banakh}{Topology of probability measure spaces, I: functors $\P_\tau$ and $\HP$}%
{For a Tychonoff space $X$, the constructions $\HP(X)$ and $\Pt(X)$ of the spaces
of probability Radon measures and probability $\tau$-smooth measures on $X$
are considered.
It is proved that the constructions $\HP$ and $\Pt$ determine  functors
in the  category of Tychonoff spaces, which extend the functor $P$ of
probability measures in the category of compacta.
In this part we investigate
general topological properties of the spaces
$\HP(X)$ and  $\Pt(X)$, as well as categorial properties of the functors
$\HP$ and $\Pt$.}

\maketitle

\section*{Introduction}

The space of probability measures is a classical object which is studied form different points of view in Measure Theory, Functional Analysis, Probability Theory, Topology and Category Theory. This paper is the first part of a larger project (the results of which were announced in \cite{1}) devoted to the study of spaces of probability measures on topological spaces, in particular,  spaces of probability $\tau$-smooth measures and probability Radon measures. Our interests primarily touch  on topological and categorial aspects of Measure Theory and are very much in line with the survey \cite{2}, where a functor   $P:\Comp\to\Comp$ of the space of probability measures  in the category of compacts is studied (we are going to use contemporary terminology, understanding a compact Hausdorff space under the term "compact").

The study of spaces of probability measures leads to the problem of extension of the functor $P$ from the category of compacta to wider categories, in particular, the category $\Tych$ of Tychonoff spaces and their continuous maps. One of such extensions
$P_\beta$ was suggested by By A.Ch. Chigogidze \cite{3}: For a Tychonoff space $X$ let us consider the space $P_\beta(X)=\{\mu\in P(\beta X)\mid \supp (\mu)\subset
X\subset\beta\, X\}$, where $\beta\,X$ is the Stone-\v Cech compactification of
$X$,  and $\supp(\mu)$ is the support of the measure $\mu$. The structure $P_\beta(X)$
induces a functor $P_\beta:\Tych\to \Tych$, which extends the functor
$P:\Comp\to\Comp$. Another construction was considered in \cite{2} by V.V.~Fedorchuk,
who noted that the functor $P\circ \beta:\Tych\to\Comp$ assigning to each Tychonoff space $X$ the space $P(\beta X)$, also extends the functor $P:\Comp\to\Comp$.

However, the functors $P_\beta $ and $P\circ \beta $ have a number of drawbacks.
In particular, the space $P_\beta (X)$ is very narrow and does not contain many natural countably additive measures on $X$ (i.e. measures non-compact supports), and, on the other hand, the space $P(\beta X)$ is very broad,
and contains all finitely additive measures on $X$, and, as a result, the functor
$P\circ\beta$ does not preserve many specific properties of the space $X$,
in particular, it significantly raises the weight (although it does not raise the density).

Thus, it is natural to consider spaces of measures, which lie between the spaces $P_\beta(X)$ and $P(\beta X)$.

For that purpose, let us consider for a Tychonoff space $X$ the following two spaces of probability measures:
$$
\hat P(X)=\{\mu\in P(\beta X)\mid\mu_*(X)=1\}\quad\text{ and }\quad
P_\tau(X)=\{\mu\in P(\beta X)\mid \mu^*(X)=1\},
$$
where $\mu_*(X)=\sup\{\mu(B)\mid X\supset B$ is a Borel subset of
$\beta X\}$ and $\mu^*(X)=\inf\{\mu(B)\mid X\subset B$ is a  Borel subset of
$\beta X\}$, which are, correspondingly, the upper and the lower  $\mu$-measures of the set $X$ in
$\beta X$ (as a tribute to historical tradition, we use the notation $\hat
P(X)$ and $P_\tau(X)$, not $P_*(X)$ and $P^*(X)$, which seem to be more natural). Evidently, $$P_\beta(X)\subset \hat P(X)\subset
\Pt(X)\subset P(\beta X)$$ for any Tychonoff space $X$, and
$P_\beta(X)=\hat P(X)=\Pt(X)=P(\beta X)$, if the space $X$ is compact.

The measures belonging to the spaces $\HP(X)$ and $\Pt(X)$ can be equivalently described both in terms of countably additive measures on the space $X$ and in terms of linear functionals on the Banach space $C_b(X)$ of bounded continuous real-valued functions on
$X$. Before providing exact formulations, let us recall some definitions.

A countably additive finite measure $\mu$, defined on the $\sigma $-algebra
${\mathcal B}(X)$ of Borel subsets of a topological space $X$,
is called
\begin{itemize}
\item[(i)] {\it a probability measure}, if $\mu(X)=1$;
\item[(ii)] {\it a regular measure}, if $\mu(A)=\sup\{\mu(Z)\mid A\supset Z$ is a closed subset of
 $X\}$ for every Borel subset
$A\subset X$;
\item[(iii)] {\it Radon}, if $\mu(A)=\sup\{\mu(K)\mid A\supset K$ is a compact subset of $X\}$ for every Borel subset
$A\subset X$ (in \cite{4} Radon measures are called dense measures);
\item[(iv)] {\it $\tau$-smooth}, if for any monotonically decreasing net  $\{Z_\alpha \}$ of closed subsets of $X$ with empty intersection $\bigcap_\alpha  Z_\alpha $, the net
$\{\mu(Z_a)\}$ of real numbers converges to zero (see \cite{4}).
\end{itemize}

Further in  this text, by measure on a topological space we shall understand a finitely additive Borel measure. One easily notices that every Radon measure on a Hausdorff space is regular and $\tau$-smooth. Moreover, a regular measure $\mu$ on a Hausdorff space $X$ is a Radon measure if and only if $$\mu(X)=\sup\{\mu(K)\mid \mbox{$K$ is a compact subset of $X$}\}.$$

Let $X$ be a Tychonoff space. For every measure $\mu\in \Pt(X)$
we will define a measure $\tilde \mu$  on $X$ by the formula $\tilde
\mu(A)=\mu^*(A)=\inf\{\mu(B)\mid A\subset B$ is a Borel subset of
$\beta X\}$, where $A$ is a Borel subset of $X$. It is known \cite{5},
or \cite[1.11]{2} (see~also Remark~\ref{r1.2}) that the measure $\tilde \mu$, defined in this way, is  $\tau$-smooth on $X$. Conversely, every probability $\tau$-smooth measure
$\tilde \mu$ on $X$ determines a measure $\mu\in \Pt(X)$ by the formula
$\mu(A)=\tilde \mu(A\cap X)$, where $A\in{\mathcal B}(\beta X)$. Under this condition Radon measures (and only them) become measures on $\beta X$,
which belong to the set $\HP(X)$. Therefore, we will call measures from $\HP(X)$ {\em Radon measures}, and the measures from $\Pt(X)$  {\em $\tau$-smooth measures}.

By $C_b(X)$ we denote the Banach space of all bounded continuous real-valued functions on $X$, endowed with the norm
$\|f\|=\sup\{|f(x)|:x\in X\}$, $f\in C_b(X)$. Not getting into the definition of the integral, let us note that every regular probability measure $\mu$ on $X$
uniquely determines the integral $\int_\mu$, a non-negative linear functional  on $C_b(X)$ of norm 1 (the value of integral $\int_\mu$
on the function $f$ will be denoted by $\int_\mu f$, or simply $\mu(f)$ ). Under such identification  we get the following equivalences (see \cite{4}),
\begin{itemize}
\item[(i)] the measure $\mu$ is $\tau$-dense if and only if
$\mu(f_\alpha )\to 0$ for any monotonically decreasing net
$\{f_\alpha \}\subset C_b(X)$ that pointwise  converges to zero;
\item[(ii)] the measure $\mu$ is Radon if and only if
$\mu(f_\alpha )\to 0$ for any net of uniformly bounded sequence of functions $\{f_\alpha \}\subset
C_b(X)$ that converges to zero uniformly on compacta.
\end{itemize}

Similarly, a measure $\mu\in P(\beta X)$ belongs to the set $\Pt(X)$ if and only if
 $\mu(f_\alpha )\to 0$ for every monotonically decreasing net
$\{f_\alpha \}\subset C(\beta X)$ converging to zero pointwise on the set
$X\subset \beta X$.

Let us show that the constructions of the spaces $\Pt(X)$ and $\HP(X)$ are functorial in the category $\Tych$. Since $\HP(X)\subset \Pt(X)\subset P(\beta X)$ for
any Tychonoff space $X$ and $P\circ \beta :\Tych\to\Comp$ is a functor on the category $\Tych$ of Tychonoff spaces \cite{2}, in order to see that the constructions  $\HP$ and $\Pt$ are functorial, it is sufficient to show that for any continuous map $f:X\to Y$ of Tychonoff spaces
$P(\beta f)(\Pt(X))\subset \Pt(Y)$ and $P(\beta f)(\HP(X))\subset \HP(Y)$,
where $\beta f:\beta X\to\beta Y$ is the Stone-\v Cech compactification of the map $f$ (see \cite[3.66]{6}). If $\mu\in\Pt(X)$, then $\mu^*(X)=1$, and,
consequently, $\mu(B)=1$ for any Borel set $B$,
$X\subset B\subset \beta X$. Then for any Borel set $B'$,
$Y\subset B'\subset \beta Y$, $P(\beta f)(\mu)(B')=\mu((\beta
f)^{-1}(B'))=1$, since $(\beta f)^{-1}(B')$ is a Borel subset of
$\beta X$, which contains $X$. This implies that $P(\beta f)(\mu)\in\Pt(Y)$,
i.e. $P(\beta f)(\Pt(X))\subset\Pt(Y)$.

If $\mu\in\HP(X)$, then for any $\e>0$ there exists a compact $K\subset X$, such that $\mu(K)>1-\e$.
Then $f(K)\subset Y$ is a compact in $Y$ such that $P(\beta
f)(\mu)(f(K))=\mu((\beta f)^{-1}(f(K)))\ge \mu(K)>1-\e$. Therefore,
the measure $P(\beta f)(\mu)$  belongs to the set  $\HP(Y)$, i.e. $P(\beta
f)(\HP(X))\subset \HP(Y)$. Let us define $\Pt(f)=P(\beta
f)|\Pt(X):\Pt(X)\to\Pt(Y)$ and $\HP(f)=P(\beta f)|\HP(X):\HP(X)\to \HP(Y)$.
Thus, we have proved the following

\begin{theorem}\label{t0.1.} The constructions $\Pt$ и $\HP$ are covariant functors in the category $\Tych$ of Tychonoff spaces and their continuous maps, which extend the functor $P:\Comp\to\Comp$.
\end{theorem}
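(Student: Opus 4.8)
The plan is to build everything on top of the already-established fact \cite{2} that $P\circ\beta:\Tych\to\Comp$ is a functor. Since $\HP(X)\subset\Pt(X)\subset P(\beta X)$ and $P(\beta X)$ is compact Hausdorff, each of $\Pt(X)$ and $\HP(X)$ is a subspace of a Tychonoff space and hence is itself an object of $\Tych$. It therefore remains only to produce the arrows and to verify the functorial identities. For a continuous map $f:X\to Y$ I would take $\Pt(f)$ and $\HP(f)$ to be the restrictions $P(\beta f)|\Pt(X)$ and $P(\beta f)|\HP(X)$; being restrictions of a continuous map, these are continuous. The decisive point is that their images land in the correct subspaces, i.e. that $P(\beta f)(\Pt(X))\subset\Pt(Y)$ and $P(\beta f)(\HP(X))\subset\HP(Y)$. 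Once these two inclusions are in hand, preservation of identities and of composition is inherited word for word from the functor $P\circ\beta$, because $\beta(\id_X)=\id_{\beta X}$, $\beta(g\circ f)=\beta g\circ\beta f$, and the new arrows are obtained from $P(\beta f)$ simply by cutting down the domain.

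So the real content is the verification of the two inclusions; write $\nu=P(\beta f)(\mu)$ for the pushforward. For $\Pt$ I would argue set-theoretically using the defining condition $\mu^*(X)=1$, which says that $\mu(B)=1$ for every Borel set $B$ with $X\subset B\subset\beta X$. Given a Borel set $B'$ with $Y\subset B'\subset\beta Y$, the preimage $(\beta f)^{-1}(B')$ is Borel in $\beta X$ (by continuity of $\beta f$) and contains $X$, since $\beta f$ extends $f$ and hence maps $X$ into $f(X)\subset Y\subset B'$. Therefore $\nu(B')=\mu((\beta f)^{-1}(B'))=1$, which is precisely the condition $\nu\in\Pt(Y)$.

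For $\HP$ the natural tool is instead the compact-support description of Radon measures recorded above: $\mu\in\HP(X)$ exactly when $\mu_*(X)=1$, equivalently when for each $\e>0$ there is a compact $K\subset X$ with $\mu(K)>1-\e$. Then $f(K)$ is a compact subset of $Y$, and from $K\subset(\beta f)^{-1}(f(K))$ together with monotonicity one obtains $\nu(f(K))=\mu((\beta f)^{-1}(f(K)))\ge\mu(K)>1-\e$; letting $\e$ tend to $0$ gives $\nu_*(Y)=1$, that is, $\nu\in\HP(Y)$. The extension statement is then automatic: for compact $X$ one has $\beta X=X$, so $P(\beta X)=P(X)$ and, as already noted, $\Pt(X)=\HP(X)=P(X)$, whence $\Pt(f)=\HP(f)=P(f)$.

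I expect these two inclusions to be the only steps requiring genuine attention, and of the two the $\HP$-case to be the more delicate, since it rests on passing between the inner-measure condition $\mu_*(X)=1$ and the existence of compacta of nearly full measure, rather than on the purely formal preimage computation that suffices for $\Pt$.
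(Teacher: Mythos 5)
Your proposal is correct and follows essentially the same route as the paper: both reduce the claim to the two inclusions $P(\beta f)(\Pt(X))\subset\Pt(Y)$ and $P(\beta f)(\HP(X))\subset\HP(Y)$, prove the first by the preimage computation on Borel sets $B'$ with $Y\subset B'\subset\beta Y$, and prove the second via a compact $K\subset X$ with $\mu(K)>1-\e$ and the estimate $\mu((\beta f)^{-1}(f(K)))\ge\mu(K)$. Your additional remarks (that $\Pt(X)$, $\HP(X)$ are Tychonoff as subspaces of $P(\beta X)$, that identities and composition are inherited from $P\circ\beta$, and that the extension of $P$ is automatic since $\beta X=X$ for compact $X$) merely make explicit what the paper leaves implicit.
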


Let us note that we might as well have defined the functors $\Pt:\Tych\to\Tych$ and
$\HP:\Tych\to\Tych$ from inside, without using Stone-\v Cech compactifications. In particular, for the Tychonoff space $X$ the space
$\Pt(X)$ consists of regular probability  $\tau$-smooth measures on $X$, and the topology on
 $\Pt(X)$ is induced by a subbase consisting of sets of the form
$\{\mu\in\Pt(X) : |\mu(\varphi)-\mu_0(\varphi)|<1\}$, where $\mu_0\in\Pt(X)$
and $\varphi\in C_b(X)$. If $f:X\to Y$ is a continuous map of Tychonoff spaces, then the map $\Pt(f):\Pt(X)\to\Pt(Y)$ is defined by the formula $\Pt(f)(\mu)(A)=\mu(f^{-1}(A))$, where $\mu\in\Pt(X)$ and $A$ is a Borel subset of $Y$. Then $\HP(X)$ is a subspace of
$\Pt(X)$, consisting of Radon probability measures, and $\HP(f)$ is the restriction of the map $\Pt(f)$ to the set $\HP(X)$. Using the roundabout way (using Stone-\v Cech compactifications) we got rid of the necessity of checking that the constructions $\Pt$ and $\HP$, defined is such way, are functors in the category $\Tych$ indeed.

Before moving on to the presentation of concrete results, let us note that in a number of spaces, for example, spaces $X$ which are Borel sets in their Stone-\v Cech compactification, every $\tau$-smooth measure is Radon. In this case the spaces $\Pt(X)$ and $\HP(X)$ coincide. More generally, this holds for the so called universally measurable spaces, that is, spaces $X$ which are measurable in some compactification $\gamma X$ with respect to any measure  $\mu\in P(\gamma X)$. Besides absolute Borel spaces, analytic and coanalytic spaces are also absolutely measurable \cite[2.2.12]{7}.

\section{Categorial properties of the functor $\Pt$}

In this section we shall investigate  categorial properties of the functor $\Pt$, and also some general topological properties of the spaces $\Pt(X)$.

Let us start with the following simple remark.

\begin{lemma}\label{l1.1} Let $X$ be a Tychonoff space. If
$\mu\in\Pt(X)$, then $\mu(A)=\mu(B)$ for any two Borel subsets $A,B\subset \beta X$ such that $A\cap X=B\cap X$.
\end{lemma}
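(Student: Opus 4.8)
The plan is to reduce the statement to the single observation already recorded in the proof of Theorem~\ref{t0.1.}: if $\mu\in\Pt(X)$, then $\mu^*(X)=1$, and consequently every Borel set $B$ with $X\subset B\subset\beta X$ satisfies $\mu(B)=1$. Passing to complements, this says equivalently that every Borel subset $C\subset\beta X\bs X$ is $\mu$-null. Once this reformulation is in hand, the lemma follows by a symmetric-difference argument.

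Concretely, I would set $C=(A\bs B)\cup(B\bs A)$, the symmetric difference of the two given Borel sets; since $A$ and $B$ are Borel, so is $C$. The hypothesis $A\cap X=B\cap X$ is exactly the assertion that $C\cap X=\emptyset$, i.e. $C\subset\beta X\bs X$. Now the set $\beta X\bs C$ is Borel and contains $X$, so by the observation above $\mu(\beta X\bs C)=1$, whence $\mu(C)=0$. Since both $A\bs B$ and $B\bs A$ are contained in $C$, they are $\mu$-null, and therefore
$$\mu(A)=\mu(A\cap B)+\mu(A\bs B)=\mu(A\cap B)=\mu(A\cap B)+\mu(B\bs A)=\mu(B),$$
which is the desired equality.

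There is no serious obstacle here; the argument is a direct consequence of the definition of $\Pt(X)$. The only two points that deserve attention are, first, translating the hypothesis $A\cap X=B\cap X$ into the geometric fact that the symmetric difference misses $X$, and second, observing that the complement of a Borel set is Borel, so that $\beta X\bs C$ is a legitimate competitor in the infimum defining $\mu^*(X)$ and hence inherits full measure. These are exactly the steps I would carry out in full.
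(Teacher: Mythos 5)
Your proof is correct and follows essentially the same route as the paper's: both arguments reduce the lemma to the fact that the symmetric difference $A\triangle B$ is a Borel subset of $\beta X\bs X$ and hence $\mu$-null because $\mu^*(X)=1$ (the paper phrases this as $\mu_*(\beta X\bs X)=0$, you phrase it via the full-measure Borel complement $\beta X\bs C\supset X$, which is the same fact). The concluding additivity computation $\mu(A)=\mu(A\cap B)+\mu(A\bs B)$ matches the paper's estimate $|\mu(A)-\mu(B)|\le\mu(A\triangle B)$ in substance, so there is nothing to correct.
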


\begin{proof} Let $A,B\subset \beta X$ be Borel sets with $A\cap X=B\cap X$. Then
$$
\begin{aligned}
|\mu(A)-\mu(B)|&=|\mu(A\cap
B)+\mu(A\bs B)-\mu(A\cap B)-\mu(B\bs A)|=\\
&=|\mu(A\bs B)-\mu(B\bs A)|\le
\mu(A\bs B)+\mu(B\bs A)=\mu((A\bs B)\cup(B\bs A)).
\end{aligned}
$$ Since $A\cap
X=B\cap X$, we have that $A\triangle B=(A\bs B)\cup (B\bs A)\subset \beta X\bs X$.
If $\mu\in \Pt(X)$, then $\mu_*(\beta X\bs X)=0$, which implies that
$\mu(A\triangle B)=\mu_*(A\triangle B)\le \mu_*(\beta X\bs X)=0$, which means that $|\mu(A)-\mu(B)|\le \mu(A\triangle B)=0$, i.e. $\mu(A)=\mu(B)$.
Thus, the lemma is proved.
\end{proof}

\begin{remark}\label{r1.2} Lemma \ref{l1.1} implies the following fact, which has already been mentioned in the introduction: every measure $\mu\in\Pt(X)$ induces a probability measure $\tilde \mu$ on $X$, according to the formula $\tilde
\mu(A)=\mu(B)$, where $B$ is any Borel subset $\beta X$ such that $B\cap X=A$, and $A$ is a Borel subset of $X$. Under these conditions, the measure
$\tilde \mu$ is $\tau$-smooth.
Indeed, for every monotonically decreasing net
$\{Z_\alpha\}$ of non-empty closed subsets of $X$ with empty intersection, the net $\{\bar Z_\alpha \}$ consisting of their closures in $\beta X$ also monotonically decreases. Then, as $\{\bar Z_\alpha \}$ is a centered family of closed subsets of the compact $\beta X$, it has a non-empty intersection $Z=\bigcap_\alpha \bar Z_\alpha $ \cite[3.1.1]{6}. Since $Z\cap
X=(\cap_\alpha \bar Z_\alpha )\cap X=\cap_\alpha (\bar Z_\alpha \cap
X)=\cap_\alpha Z_\alpha =\emptyset$, we have that $Z\subset \beta X\bs X$.
Keeping in mind that $\mu\in\Pt(X)$, we get that $\mu(Z)=0$. The regularity of the measure
$\mu$ implies that for any $\e>0$ there exists an open set
$U$, $Z\subset U\subset \beta X$ such that $\mu(U)<\e$. \cite[3.1.5]{6} implies that since $\cap_\alpha \bar Z_\alpha =Z\subset U$, $Z_{\alpha
_0}\subset U$ for some $\alpha _0$. Consequently, $\mu(\bar
Z_{\alpha _0})\le \mu(U)<\e$, and, therefore, $\tilde\mu(Z_{\alpha _0})=\mu(\bar
Z_{\alpha _0})<\e$. Since the net $\{Z_\alpha \}$ monotonically decreases, $\tilde \mu(Z_\beta )\le \tilde \mu(Z_{\alpha _0})<\e$ for all $\beta \ge\alpha _0$. But this means that the real-valued net $\{\tilde
\mu(Z_\alpha )\}$ converges to zero, thus, the measure $\tilde \mu$ on $X$ is
$\tau$-smooth.
\end{remark}

Let us recall that a map $f:X\to Y$ between topological spaces is called {\em perfect}, if it is closed and the preimage $f^{-1}(y)$ of every point $y\in
Y$ is compact.

\begin{theorem}\label{t1.3} The functor $\Pt:\Tych\to\Tych$ preserves the class of perfect maps.
\end{theorem}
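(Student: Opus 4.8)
The plan is to realize $\Pt(f)$ as the restriction of the map of compacta $P(\beta f)\colon P(\beta X)\to P(\beta Y)$ to the full preimage of $\Pt(Y)$, and to exploit the classical characterization of perfect maps via Stone--\v Cech compactifications: a continuous map $f\colon X\to Y$ of Tychonoff spaces is perfect if and only if $(\beta f)^{-1}(Y)=X$, equivalently $(\beta f)(\beta X\bs X)\subset\beta Y\bs Y$ (see \cite{6}). Granting this, the whole theorem reduces to the single identity
$$P(\beta f)^{-1}(\Pt(Y))=\Pt(X).$$
The inclusion $\Pt(X)\subset P(\beta f)^{-1}(\Pt(Y))$ is exactly the statement $P(\beta f)(\Pt(X))\subset\Pt(Y)$ established in the proof of Theorem~\ref{t0.1.}, so only the reverse inclusion needs work.

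For the reverse inclusion I would take $\mu\in P(\beta X)$ with $\nu:=P(\beta f)(\mu)\in\Pt(Y)$ and show $\mu\in\Pt(X)$, i.e. $\mu_*(\beta X\bs X)=0$. Here the perfectness of $f$ enters, supplying the equality $\beta X\bs X=(\beta f)^{-1}(\beta Y\bs Y)$. Fix a Borel set $B\subset\beta X\bs X$; since $\mu$ is a regular (indeed Radon) measure on the compact space $\beta X$, it suffices to prove $\mu(K)=0$ for every compact $K\subset B$. For such $K$ the image $(\beta f)(K)$ is compact, hence a closed (and so Borel) subset of $\beta Y\bs Y$, and therefore $\nu((\beta f)(K))=0$, because $\nu\in\Pt(Y)$ means $\nu_*(\beta Y\bs Y)=0$. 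Consequently $\mu(K)\le\mu\big((\beta f)^{-1}((\beta f)(K))\big)=\nu((\beta f)(K))=0$, whence $\mu(B)=0$ by regularity and $\mu_*(\beta X\bs X)=0$, as required. This passage to compact subsets, which replaces the possibly non-Borel sets $\beta X\bs X$ and $\beta Y\bs Y$ by manageable closed sets and lets the characterization of perfectness bite, is the step I expect to be the main obstacle.

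It remains to deduce that $P(\beta f)$ restricted to the full preimage $\Pt(X)=P(\beta f)^{-1}(\Pt(Y))$ is perfect onto $\Pt(Y)$. For this I would record the general fact that if $g\colon Z\to W$ is a continuous map of compact Hausdorff spaces and $A=g^{-1}(B)$ for some $B\subset W$, then $g|A\colon A\to B$ is perfect. Indeed, for $w\in B$ one has $(g|A)^{-1}(w)=g^{-1}(w)$, a closed subset of the compact $Z$, hence compact; and if $C$ is closed in $A$, with closure $\bar C$ in $Z$, then $g(\bar C)$ is closed in $W$ and $(g|A)(C)=g(\bar C)\cap B$ is closed in $B$ (the nontrivial inclusion being that $w\in g(\bar C)\cap B$ yields a point $z\in\bar C$ with $g(z)=w\in B$, so $z\in g^{-1}(B)\cap\bar C=A\cap\bar C=C$). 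Applying this with $g=P(\beta f)$, $Z=P(\beta X)$, $W=P(\beta Y)$, $B=\Pt(Y)$ and $A=\Pt(X)$ shows that $\Pt(f)=P(\beta f)|\Pt(X)$ is perfect, which completes the proof.
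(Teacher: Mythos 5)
Your proposal is correct and takes essentially the same route as the paper: both reduce the theorem to the identity $P(\beta f)^{-1}(\Pt(Y))=\Pt(X)$ (the paper phrases it contrapositively, as $P(\beta f)(P(\beta X)\setminus\Pt(X))\subset P(\beta Y)\setminus\Pt(Y)$), established via the characterization $\beta f(\beta X\setminus X)\subset\beta Y\setminus Y$ of perfect maps and inner regularity, i.e.\ passing to compact sets $K\subset\beta X\setminus X$ and using $\mu(K)\le\mu\bigl((\beta f)^{-1}(\beta f(K))\bigr)=P(\beta f)(\mu)(\beta f(K))$. Your explicitly proved lemma that a map of compacta restricted to a full preimage is perfect is precisely the fact the paper invokes implicitly in its closing sentence.
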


\begin{proof} Let $f:X\to Y$ be a perfect map of Tychonoff spaces. Then the extension $\beta  f:\beta X\to \beta Y$ of the map $f$ (called the {\em Stone-\v Cech compactification} of the map $f$) has the following property:
$\beta f(\beta X\bs X)\subset \beta Y\bs Y$ \cite[3.7.15]{6}.
Let us consider the map $P(\beta f):P(\beta X)\to P(\beta Y)$. We are going to prove that
$P(\beta f)(P(\beta X)\bs \Pt(X))\subset P(\beta Y)\bs \Pt(Y)$.
Indeed, let $\mu\in P(\beta X)\bs \Pt(X)$, i.e. $\mu^*(X)<1$. This implies that there exists a compact $K\subset \beta X\bs X$ such that
$\mu(K)>0$. Then $\beta f(K)\subset \beta Y\bs Y$ is  a compact subset with
$P(\beta f)(\mu)(\beta f(K))=\mu((\beta f)^{-1}(\beta f(K)))\ge \mu(K)>0$.
Consequently, $P(\beta f)(\mu)^*(Y)<1$, i.e. $P(\beta f)(\mu)\notin
\Pt(Y)$. Thus, $P(\beta f)(P(\beta X)\bs \Pt(X))\subset P(\beta
Y)\bs \Pt(Y)$.
Since $P(\beta f):P(\beta X)\to P(\beta Y)$ is a map between compacta,
the lasts inclusion implies that the map
$\Pt(f)=P(\beta f)|\Pt(X):\Pt(X)\to\Pt(Y)$ is perfect.
The theorem is proved.
\end{proof}

\begin{theorem}\label{t1.4} The functor $\Pt:\Tych\to\Tych$ preserves the class of embeddings.
\end{theorem}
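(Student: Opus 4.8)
The plan is to reduce to the case of a subspace inclusion and then to analyse the resulting change of compactification by a portmanteau-type argument. Since an embedding $f:X\to Y$ factors as a homeomorphism $X\to f(X)$ followed by the inclusion $f(X)\hookrightarrow Y$, and $\Pt$ is a functor (so it sends the homeomorphism to a homeomorphism), it suffices to treat the inclusion $\iota:X\hookrightarrow Y$ of a subspace. I would set $Z=\cl_{\beta Y}X$, the closure of $X$ in $\beta Y$; this is a compactification of $X$, so $\beta\iota:\beta X\to\beta Y$ factors as $\beta\iota=i\circ\pi$, where $\pi:\beta X\to Z$ is a continuous surjection restricting to the identity on $X$ and $i:Z\hookrightarrow\beta Y$ is a closed embedding. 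Hence $\Pt(\iota)=P(i)\circ\big(P(\pi)|\Pt(X)\big)$. As $P:\Comp\to\Comp$ preserves embeddings and $i$ is a closed embedding, $P(i)$ is an embedding, so everything reduces to showing that $q:=P(\pi)|\Pt(X):\Pt(X)\to P(Z)$ is an embedding onto its image.

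First I would record the easy facts about $q$. For $\mu\in\Pt(X)$ the measure $q(\mu)$ concentrates on $X$ in $Z$: if $X\subset B\subset Z$ is Borel then $\pi^{-1}(B)\supset X$ is Borel in $\beta X$, so $q(\mu)(B)=\mu(\pi^{-1}(B))\ge\mu^*(X)=1$, whence $q(\mu)^*(X)=1$. Injectivity of $q$ follows from Lemma~\ref{l1.1} and Remark~\ref{r1.2}: since $\pi|_X$ is the identity, $q(\mu)$ induces on $X$ the same $\tau$-smooth measure $\tilde\mu$ as $\mu$ does, and a measure in $\Pt(X)$ is determined by its induced measure on $X$. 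Continuity of $q$ is clear, it being the restriction of the continuous map $P(\pi)$.

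The heart of the proof is to show that $q^{-1}$ is continuous, i.e. that whenever a net $(\mu_\alpha)$ in $\Pt(X)$ satisfies $q(\mu_\alpha)\to q(\mu_0)$ in $P(Z)$, then $\mu_\alpha\to\mu_0$ in $\Pt(X)$; equivalently, that $\mu\mapsto\mu(h)$ is continuous along such nets for every $h\in C(\beta X)$. Here is the obstacle: for a general embedding $\beta\iota$ need not be injective, and a bounded continuous function on $X$ need not extend over $Y$ (that is, $X$ need not be $C^*$-embedded), so one cannot simply replace functions on $X$ by restrictions of functions on $Z$. The device that circumvents this is to extend $\varphi:=h|_X\in C_b(X)$ to semicontinuous functions on $Z$ that still detect $\varphi$ on the set where the measures live. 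Because $X$ carries the subspace topology of $Z$ and $\varphi$ is continuous, the oscillation of $\varphi$ (through $X$) at each point of $X$ is zero; hence the upper and lower envelopes
$$\bar\varphi(z)=\inf_{U\ni z}\ \sup_{x\in U\cap X}\varphi(x),\qquad \underline\varphi(z)=\sup_{U\ni z}\ \inf_{x\in U\cap X}\varphi(x)$$
(with $U$ ranging over open neighbourhoods of $z$ in $Z$) are, respectively, a bounded upper semicontinuous and a bounded lower semicontinuous function on $Z$, both restricting to $\varphi$ on $X$.

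Finally I would combine concentration with the portmanteau inequalities. Since $q(\mu)$ gives $X$ full measure, the Borel functions $\bar\varphi\circ\pi$ and $\underline\varphi\circ\pi$ agree with $h$ on $X$ and differ from it only on the $\mu$-null set $\beta X\bs X$, so $q(\mu)(\bar\varphi)=\mu(h)=q(\mu)(\underline\varphi)$ for every $\mu\in\Pt(X)$. For the given net, the weak$^*$ convergence $q(\mu_\alpha)\to q(\mu_0)$ on the compact space $Z$ yields $\limsup_\alpha q(\mu_\alpha)(\bar\varphi)\le q(\mu_0)(\bar\varphi)$ and $\liminf_\alpha q(\mu_\alpha)(\underline\varphi)\ge q(\mu_0)(\underline\varphi)$, the standard estimate obtained by squeezing a semicontinuous function between continuous ones and using regularity of $q(\mu_0)$. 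Rewriting both sides via the previous identity gives $\limsup_\alpha\mu_\alpha(h)\le\mu_0(h)\le\liminf_\alpha\mu_\alpha(h)$, hence $\mu_\alpha(h)\to\mu_0(h)$. This shows that $q$, and therefore $\Pt(\iota)=P(i)\circ q$, is an embedding. I expect the construction and semicontinuity of $\bar\varphi,\underline\varphi$ together with the portmanteau step to be the only delicate point; the rest is bookkeeping with Lemma~\ref{l1.1} and the functor $P$ on compacta.
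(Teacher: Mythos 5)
Your proof is correct, but it takes a genuinely different route from the paper's. The paper never changes the compactification: it works with $\beta f:\beta X\to\beta Y$ directly, sets $A=\{\mu\in P(\beta Y)\mid \mu^*(f(X))=1\}$, and shows, exactly as in the proof of Theorem~\ref{t1.3} (using $\beta f(\beta X\bs X)\subset \beta Y\bs f(X)$), that $P(\beta f)$ maps $P(\beta X)\bs\Pt(X)$ into $P(\beta Y)\bs A$; thus $\Pt(f):\Pt(X)\to A$ is the restriction of a map of compacta to a full preimage and is therefore perfect. Injectivity is then extracted from Lemma~\ref{l1.1} by evaluating two distinct measures on a closed set $Z\subset\beta X$ with $\mu(Z)\ne\eta(Z)$ and on $Z'=(\beta f)^{-1}(\beta f(Z))$, which meets $X$ in the same set as $Z$; a perfect continuous injection is a closed embedding into $A$, hence an embedding into $\Pt(Y)$. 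You instead factor $\beta\iota$ through $Z=\cl_{\beta Y}X$ and prove directly that the compactification-change map $q=P(\pi)|\Pt(X)$ has continuous inverse, via upper/lower semicontinuous envelopes of $h|_X$ and the portmanteau inequalities on the compactum $Z$. The paper's argument is shorter and yields the stronger structural information that $\Pt(f)$ is perfect onto the trace set $A$ (which is what feeds Corollary~\ref{c1.5} on closed embeddings); your argument, in exchange, gives an honest proof of the assertion made after Corollary~\ref{c1.5} that the construction of $\Pt(X)$ does not depend on the chosen compactification (your $q$ is precisely the comparison map between $\beta X$ and the compactification $Z$), and it stays within elementary weak$^*$ analysis, with no appeal to the machinery of perfect maps.

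One sentence of yours needs repair: $\beta X\bs X$ is in general not a Borel subset of $\beta X$, so it is not literally a $\mu$-null set; the hypothesis $\mu\in\Pt(X)$ gives only $\mu_*(\beta X\bs X)=0$. Your identity $q(\mu)(\bar\varphi)=\mu(h)=q(\mu)(\underline\varphi)$ is nevertheless correct, but it should be derived from Lemma~\ref{l1.1} applied level-set-wise: for every $a\in\IR$ the Borel sets $\{\bar\varphi\circ\pi>a\}$ and $\{h>a\}$ meet $X$ in the same set, hence have equal $\mu$-measure by Lemma~\ref{l1.1}, and integrating the distribution functions of these bounded Borel functions yields equality of the integrals. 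Since you already cite Lemma~\ref{l1.1} as your bookkeeping tool, this is a cosmetic fix rather than a gap; the semicontinuity of the envelopes and the portmanteau step, which you correctly identified as the delicate points, are sound.
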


\begin{proof} Let $f:X\to Y$ be a topological embedding of Tychonoff spaces and  $\beta  f:\beta X\to \beta
Y$ be its Stone-\v Cech compactification. One can easily see that
$\beta f(\beta X\bs X)\subset \beta Y\bs
f(X)$. Let $A=\{\mu\in P(\beta Y)\mid \mu^*(f(X))=1\}$.
Similarly to the proof of Theorem 1.3, it can be shown that
$P(\beta f)(P(\beta X)\bs \Pt(X))\subset P(\beta Y)\bs A$. Obviously,
$P(\beta f)(\Pt(X))\subset A$. Thus, the map $\Pt(f)=P(\beta
f)|\Pt(X):\Pt(X)\to A$ is proper. Let us show that it is also injective, which will imply that  $\Pt(f):\Pt(X)\to \Pt(Y)$ is an embedding.

Let $\mu,\eta\in\Pt(X)$ be two distinct measures. The there exists a closed set $Z\subset \beta X$ such that $\mu(Z)\ne \eta(Z)$. We state that $\Pt(f)(\mu)(\beta f(Z))\ne \Pt(f)(\eta)(\beta f(Z))$,
which will imply that the measures $\Pt(f)(\mu), \Pt(f)(\eta)\in\Pt(Y)$
are distinct. Indeed, letting $Z'=(\beta f)^{-1}(\beta f(Z))$,
let us observe that, by the definition, $\Pt(f)(\mu)(\beta f(Z))=\mu(Z')$ and
$\Pt(f)(\eta)(\beta f(Z))=\eta(Z')$. Since $f$ is an embedding, $Z'\cap
X=Z\cap X$. Then by Lemma 1.1 $$\Pt(f)(\mu)(\beta
f(Z))=\mu(Z')=\mu(Z)\ne \eta(Z)=\eta(Z')=\Pt(f)(\eta)(\beta f(Z)),$$ i.e.
the measures $\Pt(f)(\mu),\Pt(f)(\eta)\in\Pt(Y)$ are distinct.  The theorem is proved.
\end{proof}

Theorems \ref{t1.3}, \ref{t1.4} immediately imply

\begin{corollary}\label{c1.5} The functor $\Pt:\Tych\to\Tych$ preserves the class of closed embeddings.
\end{corollary}

Since the functor $\Pt$ preserves embeddings, for a pair $X\subset Y$ of Tychonoff spaces we will treat the space $\Pt(X)$ as a subset $\{\mu\in \Pt(Y)\mid \mu^*(X)=1\}$ of $\Pt(Y)$. Let us note that
when we do this, the set $\HP(X)\subset \Pt(X)$ consisting of Radon probability measures on $X$ becomes the subset
$\{\mu\in\Pt(Y)\mid \mu_*(X)=1\}\subset \Pt(Y)$. It is also worthy of note that Theorem 1.4 implies that the construction of the space $\Pt(X)$ in fact does not depend on the compactification of $X$, i.e. for any compactification $\gamma X$ of the $X$ the space $\{\mu\in P(\gamma
X)\mid
\mu^*(X)=1\}$ is naturally homeomorphic to $\Pt(X)$. As we will see in \S 2, the image of the set under this homeomorphism $\{\mu\in P(\gamma X)\mid \mu_*(X)=1\}$ is the space $\HP(X)$ of Radon probability measures on $X$.
\vskip3pt

Let us recall that the support of a measure $\mu\in P(X)$ on a compact space $X$
is the set $\supp(\mu)=\cap\{F\mid F$, a closed subset of
$X$ such that $\mu(F)=1\}$. Under this condition $\mu(\supp(\mu))=1$, i.e. the support of the measure $\mu$ is the smallest closed set of $\mu$-measure one. If $X$ is a Tychonoff space, then by the support of a  $\tau$-smooth probability measure $\mu\in\Pt(X)$ on $X$ we will sometimes understand the set $\supp(\mu)\cap X$.
\vskip3pt

The functor $\Pt$ preserves neither injective nor surjective maps. To see that $\Pt$ does not preserve injective maps, choose any non-measurable subset $Z\subset Y$ in the closed interval $Y=[0,1]$ with lower and upper Lebesgue measures $\lambda_*(Z)=0$ and $\lambda^*(Z)=1$. Next, consider the subspace $X=Z\times\{0\}\cup ([0,1]\setminus Z)\times\{1\}$ of the plane $\IR^2$, and let $f:X\to Y$,  $f:(z,t)\mapsto z$,  be the projection onto the first coordinate. It is clear that the map $f$ is bijective and continuous. It can be shown (see \cite[Example 3]{BSF}) that the Lebesgue measure $\lambda\in P(Y)=\Pt(Y)$ has two preimages under the map $\Pt(f):\Pt(X)\to\Pt(Y)$, which means that the map $\Pt(f)$ is not injective and hence $\Pt$ does not preserve injective maps\footnote{This example was added at the translation.}.

To see that the functor $\Pt$ does not preserve surjective maps, consider the bijective map $f:D\to [0,1]$ of a discrete space $D$ onto the interval $[0,1]$.
Then the standard Lebesque measure on $[0,1]$ does not have a preimage under the map $\Pt(f):\Pt(D)\to P[0,1]$. This is due to the fact that the set
$D$, being open in its Stone-\v Cech compactification, is measurable with respect to any measure $\mu\in P(\beta D)$. Consequently, any
$\tau$-smooth measure on $D$ is Radon and, since the space
$D$ is discrete, it is also atomic (on atomic measures see \cite[\S 2]{8}). But the image of an atomic measure under the map $\Pt(f)$ is also an atomic measure, and, consequently, it is not equal to the Lebesque measure on $[0,1]$.
\vskip3pt

Nonetheless, the functor $\Pt$ preserves one of the properties of maps, which in the case of compactness implies its surjectivity.

\begin{proposition}\label{p1.6} Let $f:X\to Y$ be a map with dense image $f(X)$ in $Y$. Then the image $\Pt(f)(\Pt(X))$ is dense in $\Pt(Y)$.
\end{proposition}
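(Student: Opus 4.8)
The plan is to exhibit a dense subset of $\Pt(Y)$ that already sits inside the image $\Pt(f)(\Pt(X))$; the natural candidate is the family of finitely supported probability measures whose supports lie in $f(X)$. First I would record the two density facts I need: since $f(X)$ is dense in $Y$ and $Y$ is dense in $\beta Y$, the set $f(X)$ is dense in $\beta Y$. Next, I would note that $\Pt(Y)$ carries the subspace topology inherited from the weak$^*$ topology on $P(\beta Y)$ (the one determined by $C(\beta Y)\cong C_b(Y)$, which matches the subbase $\{\nu : |\nu(\varphi)-\nu_0(\varphi)|<1\}$ from the introduction). Consequently it suffices to produce, inside $\Pt(f)(\Pt(X))$, a set that is dense in all of $P(\beta Y)$: being contained in $\Pt(Y)$, any such set is automatically dense in the subspace $\Pt(Y)$.

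The load-bearing ingredient is the standard weak$^*$ fact that for a compact Hausdorff space $K$ and a dense subset $D\subset K$, the finite convex combinations $\sum_{i=1}^{n} t_i\delta_{y_i}$ with $y_i\in D$ form a dense subset of $P(K)$. Indeed, $P(K)$ is the weak$^*$-closure of the convex hull of the Dirac measures, and $x\mapsto\delta_x$ is an embedding of $K$ into $P(K)$, so the Diracs based at points of $D$ are dense among all Diracs, whence their finite convex combinations are dense in $P(K)$. Applying this with $K=\beta Y$ and $D=f(X)$ produces a family $S$ of finitely supported measures on $f(X)$ that is dense in $P(\beta Y)$. Each member of $S$, having finite (hence compact) support, is Radon, so $S\subset\HP(Y)\subset\Pt(Y)$.

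It remains to check that $S\subset\Pt(f)(\Pt(X))$. Given $\sigma=\sum_{i=1}^{n} t_i\delta_{y_i}\in S$, I would choose $x_i\in X$ with $f(x_i)=y_i$, which is possible since $y_i\in f(X)$, and set $\mu=\sum_{i=1}^{n} t_i\delta_{x_i}$. This $\mu$ is finitely supported, hence Radon, so $\mu\in\HP(X)\subset\Pt(X)$; and since $\Pt(f)=P(\beta f)|\Pt(X)$ is affine and continuous with $\Pt(f)(\delta_x)=\delta_{\beta f(x)}=\delta_{f(x)}$ for $x\in X$, it follows that $\Pt(f)(\mu)=\sigma$. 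Thus $S\subset\Pt(f)(\Pt(X))\subset\Pt(Y)$, and as $S$ is dense in $P(\beta Y)$ it is dense in $\Pt(Y)$; a fortiori $\Pt(f)(\Pt(X))$ is dense in $\Pt(Y)$.

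I expect the only genuinely substantive step to be the density of finitely supported measures on a dense set, which is a soft fact about the weak$^*$ topology on $P(\beta Y)$ and uses nothing about $\tau$-smoothness; the remaining work is bookkeeping, the one point needing a little care being the identification of the subspace topology on $\Pt(Y)$ with the ambient weak$^*$ topology, handled at the outset. An essentially equivalent route would first argue that $\beta f$ is surjective (its image is closed and contains the dense set $f(X)$), hence $P(\beta f)$ is surjective, and then use that $\Pt(X)$ is itself dense in $P(\beta X)$; but the direct construction above avoids appealing to surjectivity of $P$ on compacta.
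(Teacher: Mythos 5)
Your proposal is correct and follows essentially the same route as the paper, whose entire proof consists in observing that $\Pt(f)(\Pt(X))$ contains the set of finitely supported probability measures with support in $f(X)$, which is dense in $P(\beta Y)\supset\Pt(Y)$. You have merely supplied the details the paper leaves implicit (density of such measures in $P(\beta Y)$, Radonness of finitely supported measures, and the lifting of atoms through $f$), all of which check out.
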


\begin{proof} One can easily see that $\Pt(f)(\Pt(X))$ contains the set  $$P_\omega(f(X))=\{\mu\in Y: |\supp(\mu)|<\infty$$ and
$\supp(\mu)\subset f(X)\}$, which is dense in $P(\beta Y)\supset\Pt(Y)$.
\end{proof}

\begin{theorem}\label{t1.7} The functor $\Pt$ preserves preimages, i.e. for any map $f:X\to Y$ between Tychonoff spaces and any subset
$A\subset Y$ we have that $\Pt(f)^{-1}(\Pt(A))=\Pt(f^{-1}(A))$.
\end{theorem}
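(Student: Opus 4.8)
The plan is to unwind the statement using the convention fixed just before the proposition, that for a subspace $S\subset T$ one has $\Pt(S)=\{\lambda\in\Pt(T)\mid\lambda^*(S)=1\}$. Thus $\Pt(f^{-1}(A))=\{\mu\in\Pt(X)\mid\mu^*(f^{-1}(A))=1\}$, while $\Pt(f)^{-1}(\Pt(A))=\{\mu\in\Pt(X)\mid\nu^*(A)=1\}$, where $\nu:=\Pt(f)(\mu)=P(\beta f)(\mu)$ is the push-forward measure on $\beta Y$. Hence everything reduces to proving, for each $\mu\in\Pt(X)$, the equivalence $\mu^*(f^{-1}(A))=1\iff\nu^*(A)=1$; I will in fact aim at the sharper identity $\mu^*(f^{-1}(A))=\nu^*(A)$, with outer $\mu$-measure computed in $\beta X$ and outer $\nu$-measure in $\beta Y$. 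Throughout I use that $\beta f$ extends $f$ and $\beta f(X)\subset Y$, so that $(\beta f)^{-1}(A)\cap X=f^{-1}(A)$.

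The inclusion $\Pt(f^{-1}(A))\subset\Pt(f)^{-1}(\Pt(A))$, equivalently $\mu^*(f^{-1}(A))\le\nu^*(A)$, is immediate: for every Borel set $B$ with $A\subset B\subset\beta Y$ the set $(\beta f)^{-1}(B)$ is Borel in $\beta X$ and contains $(\beta f)^{-1}(A)\supset f^{-1}(A)$, so $\mu^*(f^{-1}(A))\le\mu((\beta f)^{-1}(B))=\nu(B)$; taking the infimum over such $B$ gives the claim. (This is also visible functorially: the restriction $f\colon f^{-1}(A)\to A$ together with the functoriality of $\Pt$ gives $\Pt(f)(\Pt(f^{-1}(A)))\subset\Pt(A)$.)

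For the reverse inclusion I would argue by contradiction. Suppose $\nu^*(A)=1$, i.e. $\nu_*(\beta Y\bs A)=0$, yet $\mu^*(f^{-1}(A))<1$. Since $\mu$ is a Radon measure on the compactum $\beta X$, there is a compact set $K\subset\beta X\bs f^{-1}(A)$ with $\mu(K)>0$; then $K\cap X\subset X\bs f^{-1}(A)=f^{-1}(Y\bs A)$, so $f(K\cap X)\subset Y\bs A$. Because $K\cap X$ is closed in $X$ and $\mu\in\Pt(X)$, Lemma~\ref{l1.1} and Remark~\ref{r1.2} give $\mu(\overline{K\cap X}^{\,\beta X})=\tilde\mu(K\cap X)=\mu(K)>0$, so I may replace $K$ by $K_1:=\overline{K\cap X}^{\,\beta X}\subset K$, for which $\beta f(K_1)=\overline{f(K\cap X)}^{\,\beta Y}$. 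The goal is then to produce a Borel set $W\subset Y\bs A$ with $\tilde\nu(W)>0$: since $A\subset Y$, any Borel hull $D\subset\beta Y$ of $W$ automatically satisfies $D\cap A=(D\cap Y)\cap A=W\cap A=\emptyset$, so $D\subset\beta Y\bs A$ and $\nu(D)=\tilde\nu(W)>0$, contradicting $\nu_*(\beta Y\bs A)=0$ and finishing the proof. To find $W$, I would analyse the finite measure $\rho:=f_*(\tilde\mu|_{K\cap X})$ on $Y$, which satisfies $\rho(Y)=\tilde\mu(K\cap X)>0$, vanishes on every Borel set disjoint from $f(K\cap X)$, and obeys $\rho(C)\le\tilde\nu(C)$ for every Borel $C\subset Y$; an inner Borel set $W\subset f(K\cap X)\subset Y\bs A$ carrying positive $\rho$-mass then carries positive $\tilde\nu$-mass.

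The step just described is where I expect the real difficulty to lie, and it is exactly the point at which the $\tau$-smoothness (as opposed to mere regularity) of the measures must be used. The obstruction is that $K$, and even $K_1$, may contain points of $\beta X\bs X$ whose $\beta f$-images fall into $A$: these are limit points of $f(K\cap X)$ lying in $A$, and $\mu$ controls $\beta X\bs X$ only through $\mu_*(\beta X\bs X)=0$, not through $\mu^*$, so such points cannot simply be discarded. One must therefore show that the push-forward mass $\rho$ genuinely sits \emph{inside} $f(K\cap X)$ in the inner-measure sense, i.e. that $f(K\cap X)$ has full inner $\rho$-measure; this is the content that $\tau$-smoothness supplies and that fails for a merely finitely additive or merely regular measure. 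By contrast, for the Radon functor $\HP$ this step is easy: there one may already choose the compact set $K$ inside $X$, whence $f(K)$ is compact and disjoint from $A$, and $\beta Y\bs f(K)$ is an open Borel neighbourhood of $A$ of $\nu$-measure $<1$, giving the contradiction at once.
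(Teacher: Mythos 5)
Your reduction is sound as far as it goes: the easy inclusion, the bookkeeping via Lemma~\ref{l1.1}, and the reformulation ``it suffices to find a Borel $W\subset Y\bs A$ with $\tilde\nu(W)>0$'' are all correct, and you have put your finger on exactly the right spot. But the step you then delegate to $\tau$-smoothness --- that the push-forward mass $\rho=f_*(\tilde\mu|_{K\cap X})$ sits inside $f(K\cap X)$ in the \emph{inner}-measure sense --- is a genuine gap, and it cannot be filled: $\tau$-smoothness yields inner regularity with respect to closed sets only and gives no control over the inner measure of the (generally non-Borel) set $f(K\cap X)$. In fact, for arbitrary $A\subset Y$ the statement itself fails. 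Take the example appearing in the paper itself (the one added at translation, from \cite{BSF}): $Z\subset[0,1]$ with $\lambda_*(Z)=0$ and $\lambda^*(Z)=1$, $X=Z\times\{0\}\cup([0,1]\bs Z)\times\{1\}$, $f:X\to Y=[0,1]$ the projection, and put $A=Z$. Let $\tilde\mu$ be the trace of $\lambda$ on $[0,1]\bs Z$, transplanted to $([0,1]\bs Z)\times\{1\}$ and viewed as a Borel measure on $X$; being a Borel measure on a separable metrizable space it is $\tau$-smooth, so it determines $\mu\in\Pt(X)$. Then $\Pt(f)(\mu)=\lambda$ and $\lambda^*(Z)=1$, so $\Pt(f)(\mu)\in\Pt(A)$; but $f^{-1}(A)=Z\times\{0\}=X\cap([0,1]\times\{0\})$ is Borel in $X$ with $\tilde\mu(Z\times\{0\})=0$, hence $\mu^*(f^{-1}(A))=0$ and $\mu\notin\Pt(f^{-1}(A))$. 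This is precisely your obstruction realized: here $\rho\le\lambda$ is carried outerly by $f(K\cap X)\subset[0,1]\bs Z$, a set of inner Lebesgue measure zero, so no Borel $W\subset f(K\cap X)$ of positive mass exists.

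You should also know that the paper's own proof stumbles at the same point: from $\mu^*(f^{-1}(A))<1$ it infers the existence of a compact $K\subset X\bs f^{-1}(A)$ with $\mu(K)>0$, which is inner compact regularity --- i.e.\ Radon-ness --- of $\mu$, not a property of $\tau$-smooth measures; in the example above $\mu^*(f^{-1}(A))=0$, yet every compact subset of $X\bs f^{-1}(A)=([0,1]\bs Z)\times\{1\}$ is $\mu$-null. That compact-set argument is the correct one for $\HP$ (it is essentially Theorem~\ref{t2.14}, and your closing remark about $\HP$ is right). What survives for $\Pt$ is the Borel case, where Lemma~\ref{l1.1} even yields the sharper identity you aimed at: if $A\subset Y$ is Borel, choose a Borel $\tilde A\subset\beta Y$ with $\tilde A\cap Y=A$; since $(\beta f)^{-1}(\tilde A)\cap X=f^{-1}(A)$, Lemma~\ref{l1.1} gives $\nu^*(A)=\nu(\tilde A)=\mu\bigl((\beta f)^{-1}(\tilde A)\bigr)=\mu^*(f^{-1}(A))$, because any Borel hull $D$ of $A$ in $\beta Y$ satisfies $\nu(D)\ge\nu(D\cap\tilde A)=\nu(\tilde A)$, and likewise on the $X$ side. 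So the honest conclusion is: your proof is incomplete at the flagged step, no completion exists for general $A$, and Theorem~\ref{t1.7} as stated should be restricted to Borel (or universally measurable) subsets $A\subset Y$, where the short Lemma~\ref{l1.1} argument replaces both your construction and the paper's.
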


\begin{proof} The inclusion $\Pt(f^{-1}(A))\subset \Pt(f)^{-1}(\Pt(A))$
is trivial. Let us show that $\Pt(f)^{-1}(\Pt(A))\subset \Pt(f^{-1}(A))$.
This can be derived from the inclusion $$\Pt(f)(\Pt(X)\bs \Pt(f^{-1}(A)))\subset \Pt(Y)\bs \Pt(A).$$ Let
$\mu\in \Pt(X)\bs \Pt(f^{-1}(A))$, i.e. $\mu^*(f^{-1}(A))<1$. This means that there exists a compact $K\subset X\bs f^{-1}(A))$, such that $\mu(K)>0$.
Then $f(K)$ is a compact subset of $Y\bs A$ such that $\Pt(f)(\mu)(f(K))=
\mu(f^{-1}(f(K)))\ge \mu(K)>0$, i.e. $\Pt(f)(\mu)(A)<1$, and, as a consequence,
$\Pt(f)(\mu)\notin \Pt(A)$. The theorem is proved.
\end{proof}

An embedding-preserving functor $F:\Tych\to\Tych$ is said to preserve ({\em closed}) {\em intersections}, if for any Tychonoff space $X$ and a family $\{X_\alpha \}_{\alpha \in A}$ of its (closed) subsets we get $F(\bigcap_{\alpha \in A}X_\alpha )=\bigcap_{\alpha \in
A}F\,X_\alpha $.

\begin{remark}\label{r1.8} Unlike the functor $\HP$, which preserves countable intersections (see Theorem \ref{t2.15}), the functor $\Pt$ does not preserve even finite intersections. This can be seen from the following example: let $X\subset
[0,1]$ be a subset of the interval such that $\lambda ^*(X)=1$ and $\lambda _*(X)=0$,
where $\lambda $ is a the standard Lebesque measure on $[0,1]$. Then $\lambda
^*([0,1]\bs X)=1$. Consequently, $\lambda \in \Pt(X)\cap \Pt([0,1]\bs
X)$. However, $\Pt(X\cap ([0,1]\bs X))=\Pt(\emptyset)=\emptyset$.
\end{remark}

Yet, we have the following

\begin{proposition}\label{p1.9} Let $X$ be a Tychonoff space and
$A,B\subset X$ -- its two subsets, one of which is Borel. Then
$\Pt(A\cap B)=\Pt(A)\cap\Pt(B)$.
\end{proposition}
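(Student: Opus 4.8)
The plan is to prove the two inclusions $\Pt(A\cap B)\subset\Pt(A)\cap\Pt(B)$ and $\Pt(A)\cap\Pt(B)\subset\Pt(A\cap B)$ separately, working inside a fixed compactification $\beta X$ and using the identification, established after Corollary~\ref{c1.5}, of $\Pt(S)$ with $\{\mu\in\Pt(X)\mid\mu^*(S)=1\}$ for every subset $S\subset X$. The first inclusion is the easy, symmetric one: since $A\cap B\subset A$ and $A\cap B\subset B$, monotonicity of the outer measure gives $\mu^*(A\cap B)\le\mu^*(A)$ and $\mu^*(A\cap B)\le\mu^*(B)$, so any $\mu$ with $\mu^*(A\cap B)=1$ satisfies $\mu^*(A)=\mu^*(B)=1$. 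This requires no hypothesis on $A$ or $B$ and is where I would start.

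The substance lies in the reverse inclusion, and this is where the Borel hypothesis on one of the sets enters. Suppose without loss of generality that $B$ is Borel in $X$, and take $\mu\in\Pt(A)\cap\Pt(B)$, so that $\mu^*(A)=\mu^*(B)=1$; I must show $\mu^*(A\cap B)=1$. The key step is that because $B$ is Borel in $X$, it extends to a Borel set $\tilde B\subset\beta X$ with $\tilde B\cap X=B$, and then $\mu^*(B)=\mu(\tilde B)$ by the definition of the induced measure (Remark~\ref{r1.2} and Lemma~\ref{l1.1}): the outer measure of a Borel subset of $X$ is computed as the genuine $\mu$-measure of any Borel set in $\beta X$ meeting $X$ in exactly that set. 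Hence $\mu(\tilde B)=1$.

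Now I would estimate $\mu^*(A\cap B)$ from below. Choose a Borel set $C\subset\beta X$ with $A\subset C$; I claim $\mu(C\cap\tilde B)\ge\mu^*(A\cap B)$ is not quite what I need — rather, I want every Borel set of $\beta X$ containing $A\cap B$ to have $\mu$-measure $1$. Given such a Borel set $D\supset A\cap B$ (where $A\cap B=A\cap\tilde B\cap X$), consider $D\cup(\beta X\setminus\tilde B)$: this is Borel and contains $A$, because any point of $A$ either lies in $\tilde B$ (hence in $A\cap\tilde B\cap X=A\cap B\subset D$) or lies outside $\tilde B$. Therefore $\mu^*(A)=1$ forces $\mu\bigl(D\cup(\beta X\setminus\tilde B)\bigr)=1$; since $\mu(\beta X\setminus\tilde B)=0$, additivity yields $\mu(D)=1$. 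As $D$ was an arbitrary Borel superset of $A\cap B$ in $\beta X$, we conclude $\mu^*(A\cap B)=1$, i.e. $\mu\in\Pt(A\cap B)$.

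The main obstacle, and the only place any care is needed, is the bookkeeping in this last paragraph: ensuring that $A\cap B$ really equals $A\cap\tilde B\cap X$ (which holds since $\tilde B\cap X=B$ and $A\subset X$), and that the trick of enlarging $D$ by the $\mu$-null complement of $\tilde B$ produces a Borel superset of all of $A$ rather than merely of $A\cap B$. Once the Borel extension $\tilde B$ is fixed so that it is exactly $\mu$-null off $B$, everything reduces to monotonicity and finite additivity of $\mu$ on $\beta X$; no net arguments or $\tau$-smoothness are needed beyond what Lemma~\ref{l1.1} and Remark~\ref{r1.2} already supply.
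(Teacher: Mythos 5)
Your proof is correct, and it reaches the conclusion by a slightly different (dual) route from the paper's. The shared skeleton is identical: the trivial inclusion by monotonicity of $\mu^*$, the choice of a Borel extension $\tilde B\subset\beta X$ with $\tilde B\cap X=B$, the identity $A\cap B=A\cap\tilde B\cap X$, and the fact $\mu(\tilde B)=1$. Where you diverge is in converting these into $\mu^*(A\cap B)=1$. The paper argues on the complement: it takes an arbitrary compact $K\subset\beta X\setminus(A\cap\tilde B)$, splits it into the Borel pieces $K_1=K\setminus\tilde B$ and $K_2=K\cap\tilde B$, kills $K_1$ using $\mu(\tilde B)=1$ and $K_2$ using $\mu^*(A)=1$ (since $K_2\cap A=\emptyset$), and then implicitly invokes the regularity of measures on the compactum $\beta X$ to pass from ``every compact subset of the complement is $\mu$-null'' to $\mu^*(A\cap\tilde B)=1$. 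You instead work directly with the definition of the outer measure: given an arbitrary Borel $D\supset A\cap B$, you enlarge it by the $\mu$-null set $\beta X\setminus\tilde B$ to get a Borel superset of all of $A$, whence $\mu(D)=1$ by $\mu^*(A)=1$ together with finite subadditivity. The two decompositions are complements of one another --- your inclusion $A\subset D\cup(\beta X\setminus\tilde B)$ is, after taking complements, exactly the paper's observation that $(\beta X\setminus D)\cap\tilde B$ is disjoint from $A$ --- but your version is marginally more economical, using only monotonicity and finite subadditivity of $\mu$ on $\beta X$, with no compact sets and no appeal to inner regularity. One cosmetic remark: to get $\mu(\tilde B)=1$ you invoke Lemma~\ref{l1.1} to identify $\mu^*(B)$ with $\mu(\tilde B)$; this is valid (your $\mu$ lies in $\Pt(A)\subset\Pt(X)$, as Lemma~\ref{l1.1} requires), but it is more than you need, since the one-sided inequality $\mu^*(B)\le\mu(\tilde B)$ is immediate from the fact that $\tilde B$ is one of the Borel supersets over which the infimum defining $\mu^*(B)$ is taken --- this is in effect how the paper phrases it, via the inclusion $\Pt(B)\subset\Pt(\tilde B)$.
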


\begin{proof}  Without loss of generality we can assume that
$B\subset X$ is Borel. Let $\tilde B\subset \beta X$ be a Borel subset of $\beta X$ such that $\tilde B\cap X=B$. Apparently, $\Pt(A\cap B)\subset \Pt(A)\cap \Pt(B)\subset \Pt(A)\cap \Pt(\tilde B)$.
We will show that the inverse inclusion also holds. Let us fix a measure
$\mu\in\Pt(A)\cap\Pt(\tilde B)$ and note that $A\cap B=A\cap\tilde B$.
To show that $\mu\in \Pt(A\cap B)$ it is sufficient to show that
$\mu^*(A\cap\tilde B)=1$. Let $K\subset \beta X$ be an arbitrary compact subset such that $K\subset \beta X\bs(A\cap\tilde B)=(\beta X\bs A)\cup(\beta X\bs\tilde B)$.
Our aim is to prove that $\mu(K)=0$. Let us present the compact $K$ as a union $K=K_1\cup K_2$ of two Borel sets $K_1=K\bs\tilde B$ and
$K_2=K\cap \tilde B$. Since $\mu\in\Pt(\tilde B)$, $\mu(K_1)=0$.
Then, let us note that $K_2\subset \beta X\bs A$. As $\mu\in \Pt(A)$ and
$K_2$ is a Borel set such that $K_2\cap A=\emptyset$,
$\mu(K_2)=0$. Therefore, $\mu(K)=\mu(K_1)+\mu(K_2)=0$. Thus,
$\mu\in \Pt(A\cap B)$, which means that $\Pt(A\cap B)=\Pt(A)\cap \Pt(B)$.
The claim is proved.
\end{proof}

\begin{theorem}\label{t1.10} The functor $\Pt:\Tych\to\Tych$ preserves intersections of closed subsets, i.e. for any Tychonoff space $X$ and closed subsets $X_\alpha\subset X$, $\alpha\in A$, we get $$\Pt(\bigcap_{\alpha\in A}X_\alpha)=\bigcap_{\alpha\in
A}\Pt(X_\alpha).$$
\end{theorem}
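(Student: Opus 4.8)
The plan is to prove the two inclusions separately, treating, as agreed after Corollary~\ref{c1.5}, each $\Pt(X_\alpha)$ as the subset $\{\mu\in\Pt(X)\mid\mu^*(X_\alpha)=1\}$ of $\Pt(X)$, and writing $Y=\bigcap_{\alpha\in A}X_\alpha$, a closed subset of $X$. The inclusion $\Pt(Y)\subset\bigcap_{\alpha\in A}\Pt(X_\alpha)$ is immediate: if $\mu^*(Y)=1$ then, since $Y\subset X_\alpha$ and the outer measure $\mu^*$ is monotone, $\mu^*(X_\alpha)\ge\mu^*(Y)=1$ for every $\alpha$, so $\mu\in\Pt(X_\alpha)$. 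All the work goes into the reverse inclusion $\bigcap_{\alpha\in A}\Pt(X_\alpha)\subset\Pt(Y)$.

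So I would fix $\mu\in\bigcap_{\alpha\in A}\Pt(X_\alpha)$; in particular $\mu\in\Pt(X)$ and $\mu^*(X_\alpha)=1$ for each $\alpha$. For each $\alpha$ let $\overline{X_\alpha}$ be the closure of $X_\alpha$ in $\beta X$; since $X_\alpha$ is closed in $X$ we have $\overline{X_\alpha}\cap X=X_\alpha$, so Remark~\ref{r1.2} (applied with the Borel set $\overline{X_\alpha}$) gives $\mu(\overline{X_\alpha})=\tilde\mu(X_\alpha)=\mu^*(X_\alpha)=1$. The goal is to show $\mu^*(Y)=1$. The crucial observation is that $Y$ need not be handled through its own closure $\overline Y$ in $\beta X$: the set $C:=\bigcap_{\alpha\in A}\overline{X_\alpha}$ is a closed (hence Borel) subset of $\beta X$ whose trace on $X$ is $C\cap X=\bigcap_{\alpha\in A}(\overline{X_\alpha}\cap X)=\bigcap_{\alpha\in A}X_\alpha=Y$, so by Remark~\ref{r1.2} again $\mu^*(Y)=\tilde\mu(Y)=\mu(C)$. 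It therefore suffices to prove $\mu(C)=1$.

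This last step I would carry out exactly as in Remark~\ref{r1.2}, using only the compactness of $\beta X$. For every finite $F\subset A$ the closed set $C_F:=\bigcap_{\alpha\in F}\overline{X_\alpha}$ satisfies $\mu(\beta X\bs C_F)=\mu\bigl(\bigcup_{\alpha\in F}(\beta X\bs\overline{X_\alpha})\bigr)\le\sum_{\alpha\in F}\bigl(1-\mu(\overline{X_\alpha})\bigr)=0$, so $\mu(C_F)=1$. The sets $\{C_F\}$ form a monotonically decreasing net of closed subsets of $\beta X$ with $\bigcap_F C_F=C$. Given $\e>0$, regularity of $\mu$ yields an open $U\supset C$ with $\mu(U)<\mu(C)+\e$; since $\bigcap_F(C_F\bs U)=\emptyset$ and $\beta X$ is compact, the finite intersection property forces $C_{F_0}\subset U$ for some $F_0$, whence $1=\mu(C_{F_0})\le\mu(U)<\mu(C)+\e$. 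As $\e$ was arbitrary, $\mu(C)=1$, and consequently $\mu^*(Y)=\mu(C)=1$, i.e. $\mu\in\Pt(Y)$.

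The one genuinely delicate point, which is what makes Lemma~\ref{l1.1} and Remark~\ref{r1.2} indispensable here, is that in general $\overline Y\subsetneq\bigcap_{\alpha\in A}\overline{X_\alpha}$: the intersection of the closures may carry extra mass concentrated in the remainder $\beta X\bs X$, so one cannot compute $\mu^*(Y)$ from $\overline Y$ directly. Passing to $C=\bigcap_\alpha\overline{X_\alpha}$ is legitimate precisely because, by Lemma~\ref{l1.1}, the value $\mu(B)$ for $\mu\in\Pt(X)$ depends only on $B\cap X$; after that reduction the remaining claim $\mu(C)=1$ is a routine compactness argument. I expect no difficulty with the finite-intersection estimate; the only thing to watch is that the whole argument stays inside $\beta X$ and never needs $\tau$-smoothness of $\tilde\mu$ on $X$ itself, only the compactness of the ambient $\beta X$.
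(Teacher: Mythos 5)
Your proof is correct, and its skeleton coincides with the paper's: both pass to the closures $\bar X_\alpha$ of $X_\alpha$ in $\beta X$, use Lemma~\ref{l1.1} to get $\mu(\bar X_\alpha)=\mu^*(X_\alpha)=1$, reduce the problem to showing $\mu(C)=1$ for $C=\bigcap_{\alpha\in A}\bar X_\alpha$, and then convert $\mu(C)=1$ into $\mu^*\bigl(\bigcap_{\alpha\in A}X_\alpha\bigr)=1$ by Lemma~\ref{l1.1} again. The single point of divergence is the central step $\mu(C)=1$: the paper dispatches it in one line via the support --- each $\bar X_\alpha$ is a closed set of full measure, and $\supp(\mu)$ is the smallest such set (the fact $\mu(\supp(\mu))=1$ is stated just after Corollary~\ref{c1.5}), so $\supp(\mu)\subset C$ and $\mu(C)\ge\mu(\supp(\mu))=1$ --- whereas you re-derive it by hand: $\mu(C_F)=1$ for every finite $F\subset A$, outer regularity of $\mu$ at the closed set $C$, and the finite intersection property in the compact $\beta X$, exactly the mechanism of Remark~\ref{r1.2}. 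Your version is more self-contained, since it effectively inlines the proof that full-measure closed sets are stable under arbitrary intersections instead of quoting the support fact (whose proof is the same regularity-plus-compactness argument); the paper's route buys brevity. Both are sound.

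One small inaccuracy in your closing commentary, which does not affect the proof: you claim that $C$ ``may carry extra mass'' relative to $\bar Y$ and that $\mu^*(Y)$ therefore cannot be computed from $\bar Y$ directly. In fact, since $Y$ is closed in $X$ we have $\bar Y\cap X=Y=C\cap X$, so for $\mu\in\Pt(X)$ Lemma~\ref{l1.1} gives $\mu(\bar Y)=\mu(C)=\mu^*(Y)$ --- by the very principle you invoke, the two sets have equal measure. The genuine reason to work with $C$ rather than $\bar Y$ is not that their measures differ, but that $C$ is the intersection of the sets $\bar X_\alpha$ whose measures you actually control, so full measure can be \emph{proved} for it.
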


\begin{proof} The inclusion $\Pt(\bigcap_{\alpha\in A}X_\alpha)\subset
\bigcap_{\alpha\in A}\Pt(X_\alpha)$ is obvious. Let $\mu\in\bigcap_{\alpha\in
A}\Pt(X_\alpha)$. In order to prove that $\mu\in\Pt(\cap_{\alpha \in A}X_\alpha
)$, we need to show that  $\mu^*(\cap_{\alpha \in A}X_\alpha )=1$, or, equivalently, that for every Borel set $B\subset \beta X$,
$\bigcap_{\alpha \in A}X_\alpha \subset B$, we have that $\mu(B)=1$. Let
$\bar X_\alpha $ be the closure of the set $X_\alpha $ in $\beta X$. As the set
 $X_\alpha$ is closed in $X$, $\bar X_\alpha \cap X=X_\alpha $.
Since $\mu\in\Pt(X_\alpha )$ for every  $\alpha $, $\mu(\bar
X_\alpha )=\mu^*(\bar X_\alpha )=1$. Therefore, $\supp(\mu)\subset
\bigcap_{\alpha \in A}\bar X_\alpha $. Since $\mu(\supp(\mu))=1$, we have that
$\mu(\cap_{\alpha \in A}\bar X_\alpha )=1$. Since the set
$\bigcap_{\alpha \in A}\bar X_\alpha \subset \beta X$ is closed and
$\bigl(\cap_{\alpha \in A}\bar X_\alpha \bigl)\cap X=\bigcap_{\alpha \in
A}(\bar X_\alpha \cap X)=\bigcap_{\alpha \in A}X_\alpha $, for every Borel set $B$, $\bigcap_{\alpha \in A}X_\alpha \subset
B\subset \beta X$, according to Lemma 1.1, $\mu(B)\ge\mu(\cap_{\alpha \in
A}\bar X_\alpha )=1$. Hence $\mu^*(\cap_{\alpha \in A}X_\alpha )=1$,
i.e. $\mu\in\Pt(\cap_{\alpha \in A}X_\alpha )$. The theorem is proved.
\end{proof}

Now let us consider the question of continuity of the functor $\Pt$. Let $A$ be a directed partially ordered set (a partially ordered set is {\em directed} if for any $\alpha,\beta\in A$ there exists a $\gamma\in A$, such that
$\gamma \ge \alpha$ и $\gamma\ge\beta$ ).

Let $\{X_\alpha,p_\alpha^\beta\}$ be an inverse system indexed be the set
$A$ and consisting of Tychonoff spaces. By $\invlim X_\alpha$ we will denote the limit of this system, and by $p_\alpha:\invlim X_\alpha\to X_\alpha$,
$\alpha\in A$, its bonding mappings (limit projections).
The inverse system $\{X_\alpha,p_\alpha^\beta\}$ induces an inverse limit
$\{\Pt(X_\alpha),\Pt(p_\alpha^\beta)\}$, whose limit will be denoted by
$\invlim \Pt(X_\alpha)$, and limit projections by $\pr_\alpha:\invlim
\Pt(X_\alpha)\to\Pt(X_\alpha)$. The map $\Pt(p_\alpha):\Pt(\invlim
X_\alpha)\to \Pt(X_\alpha)$ induces a map  $R:\Pt(\invlim X_\alpha)\to \invlim \Pt(X_\alpha)$.

It is well-known that if all $X_\alpha$ are compact, then the map $R$ is a homeomorphism. This implies from the continuity of the functor $P$ in the category of compacta \cite[VII.3.11]{9}.

\begin{theorem}\label{t1.11} The map $R:\Pt(\invlim X_\alpha)\to \invlim
\Pt(X_\alpha)$ is an embedding. If the bonding mappings (limit projections)
$p_\alpha:\invlim X_\alpha \to X_\alpha$ are dense (i.e. $p_\alpha(\invlim X_\alpha)$
is dense in $X_\alpha$), then the image $R(\Pt(\invlim
X_\alpha))$ is dense in $\invlim \Pt(X_\alpha)$.
\end{theorem}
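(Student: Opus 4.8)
The plan is to reduce everything to the classical continuity of $P$ on compacta. Set $Y=\invlim\beta X_\alpha$, the limit of the inverse system of compacta $\{\beta X_\alpha,\beta p_\alpha^\beta\}$, with limit projections $\pi_\alpha:Y\to\beta X_\alpha$. Writing $X=\invlim X_\alpha$, the embeddings $X_\alpha\hookrightarrow\beta X_\alpha$ induce a map $j:X\to Y$, which is an embedding because it is the restriction to $\invlim X_\alpha$ of the product embedding $\prod_\alpha X_\alpha\hookrightarrow\prod_\alpha\beta X_\alpha$. A density argument on $X\subset\beta X$ shows that the extensions satisfy $\pi_\alpha\circ\beta j=\beta p_\alpha$ for every $\alpha$. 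By the continuity of $P$ in $\Comp$, the natural map $R_\beta:P(Y)\to\invlim P(\beta X_\alpha)$, $R_\beta(\mu)=(P(\pi_\alpha)(\mu))_\alpha$, is a homeomorphism. First I would verify the key identity $R=R_\beta\circ\Pt(j)$ (here $\Pt(Y)=P(Y)$ since $Y$ is compact); it follows at once from $\pi_\alpha\circ\beta j=\beta p_\alpha$ together with the definitions $\Pt(p_\alpha)(\mu)=P(\beta p_\alpha)(\mu)$ and $\Pt(j)(\mu)=P(\beta j)(\mu)$.

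Granting this, the first assertion is immediate: $\Pt(j)$ is an embedding by Theorem~\ref{t1.4}, and composing an embedding with the homeomorphism $R_\beta$ yields an embedding, so $R$ is an embedding.

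For the second assertion I would identify, inside $P(Y)$, the two subsets that $R_\beta$ carries onto $\invlim\Pt(X_\alpha)$ and onto $R(\Pt(X))$. Since $\{\Pt(X_\alpha),\Pt(p_\alpha^\beta)\}$ is a subsystem of $\{P(\beta X_\alpha),P(\beta p_\alpha^\beta)\}$, we have $\invlim\Pt(X_\alpha)\subset\invlim P(\beta X_\alpha)$, and its $R_\beta$-preimage is $Q=\{\mu\in P(Y):\mu^*(\pi_\alpha^{-1}(X_\alpha))=1\text{ for all }\alpha\}$. Here one uses the equivalence $P(\pi_\alpha)(\mu)^*(X_\alpha)=1\iff\mu^*(\pi_\alpha^{-1}(X_\alpha))=1$, whose nontrivial direction is proved exactly as in Theorem~\ref{t1.3}: if $\mu^*(\pi_\alpha^{-1}(X_\alpha))<1$, regularity of $\mu$ on the compact $Y$ produces a compact $K\subset Y\bs\pi_\alpha^{-1}(X_\alpha)$ with $\mu(K)>0$, and then $\pi_\alpha(K)$ is a compact subset of $\beta X_\alpha\bs X_\alpha$ of positive $P(\pi_\alpha)(\mu)$-measure. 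On the other hand, by Theorem~\ref{t1.4} and the identification following Corollary~\ref{c1.5}, $\Pt(j)$ maps $\Pt(X)$ homeomorphically onto $D=\{\mu\in P(Y):\mu^*(j(X))=1\}$, so $R(\Pt(X))=R_\beta(D)$. Observing that $\bigcap_\alpha\pi_\alpha^{-1}(X_\alpha)=j(X)$ gives $D\subset Q$; the inclusion is in general proper, which is precisely the reflection in $P(Y)$ of the failure of $\Pt$ to preserve intersections (Remark~\ref{r1.8}). It therefore remains to show that $D$ is dense in $Q$.

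Here the density hypothesis enters, and this is the main obstacle. Assuming each $p_\alpha$ is dense, I would first show that $\pi_\alpha(j(X))=p_\alpha(X)$ (viewed in $\beta X_\alpha$) is dense in $\beta X_\alpha$, which follows from density of $p_\alpha(X)$ in $X_\alpha$ and of $X_\alpha$ in $\beta X_\alpha$. Since the sets $\pi_\gamma^{-1}(W)$, with $W$ open in $\beta X_\gamma$, form a base of $Y$, it follows that $j(X)$ is dense in the compact space $Y$. Then the finitely supported probability measures with supports in $j(X)$ are dense in all of $P(Y)$ (exactly the fact used in Proposition~\ref{p1.6}); as such measures lie in $D\subset Q\subset P(Y)$, the set $D$ is dense in $Q$. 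Applying the homeomorphism $R_\beta$ finally gives that $R(\Pt(X))=R_\beta(D)$ is dense in $R_\beta(Q)=\invlim\Pt(X_\alpha)$, completing the proof. The only genuinely delicate points are the outer-measure equivalence above and the passage from density of the projections to density of $j(X)$ in $Y$; everything else is formal.
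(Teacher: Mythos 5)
Your proposal is correct and takes essentially the same route as the paper's proof: embed the inverse system into its Stone-\v{C}ech compactifications, use the continuity of $P$ on compacta to get the homeomorphism $R_\beta$ (the paper's $\bar R$), observe that $R$ factors through it via the embedding $\Pt(j)$ (Theorem~\ref{t1.4}), and obtain density from the density of $j(\invlim X_\alpha)$ in $\invlim\beta X_\alpha$ together with the finitely-supported-measure argument of Proposition~\ref{p1.6}. You merely spell out steps the paper leaves implicit (the identity $\pi_\alpha\circ\beta j=\beta p_\alpha$, the sets $D\subset Q$ and the outer-measure equivalence, and the sandwich argument giving density in the intermediate subspace), all of which check out.
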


\begin{proof} Let us consider the Stone-\v Cech compactification $\{\beta
X_\alpha,\beta(p_\alpha^\beta)\}$ of the system $\{X_\alpha,p_\alpha^\beta\}$ and note that
$\invlim X_\alpha$ can be embedded in $\invlim \beta X_\alpha$.
Moreover, if the bonding mappings $p_\alpha:\invlim X_\alpha\to X_\alpha$ are  dense, then the image of the space $\invlim X_\alpha$ is dense in $\invlim\beta X_\alpha $.
By the continuity of the functor $P:\Comp\to\Comp$, the corresponding map $\bar R:P(\invlim
\beta X_\alpha)\to \invlim P(\beta X_\alpha)$ is a homeomorphism. Applying the fact that the functor $\Pt$ preserves embeddings, we get that the map
$R:\Pt(\invlim X_\alpha)\to \invlim \Pt(X_\alpha)$ can be embedded in the homeomorphism
$\bar R$ and is, therefore, an embedding.
Furthermore, since the functor $\Pt$ preserves maps with dense images, given that the bonding mappings $p_\alpha$ are dense, the image of the space
$\Pt(\invlim X_\alpha)$ under the embedding $R$ is dense in $\invlim\Pt(X_\alpha)$.
The theorem is proved.
\end{proof}

Now we are going to consider the property of preserving homotopies, which, in the compact case, is closely linked to the continuity of functors \cite{10}.

For Tychonoff spaces $X$ and $Y$ let us define a map $j_{XY}:\Pt(X)\times Y\to
\Pt(X\times Y)$ determined by the formula
$j_{XY}(\mu,y)=\Pt(i_y)(\mu)$, $\mu\in \Pt(X)$, $y\in Y$, where $i_y:X\to
X\times Y$ is an embedding of $X$ in $X\times Y$ as a fiber:
$i_y(x)=(x,y)$, $x\in X$.

\begin{proposition}\label{p1.12} The map $j_{XY}:\Pt(X)\times Y\to \Pt(X\times
Y)$ is a closed embedding.
\end{proposition}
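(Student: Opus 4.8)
The plan is to realize $j_{XY}$ as the restriction of a closed embedding living in the compact category $\Comp$, and then to transfer both the embedding property and the closedness of the image down to the Tychonoff setting by a pair of outer-measure computations.

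First I would fix the compactification $\beta X\times\beta Y$ of $X\times Y$; by the remark following Theorem~\ref{t1.4}, $\Pt(X\times Y)$ is naturally homeomorphic to $\{\nu\in P(\beta X\times\beta Y)\mid\nu^*(X\times Y)=1\}$, and I work inside $P(\beta X\times\beta Y)$ throughout. For $z\in\beta Y$ let $e_z:\beta X\to\beta X\times\beta Y$, $e_z(x)=(x,z)$, be the fiber embedding, and consider $\bar\jmath:P(\beta X)\times\beta Y\to P(\beta X\times\beta Y)$, $\bar\jmath(\nu,z)=P(e_z)(\nu)$. Since $\beta X\times\beta Y$ is compact, every $\varphi\in C(\beta X\times\beta Y)$ is uniformly continuous, so $(\nu,z)\mapsto\bar\jmath(\nu,z)(\varphi)=\int\varphi(x,z)\,d\nu(x)$ is jointly continuous; thus $\bar\jmath$ is continuous. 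It is injective: $\bar\jmath(\nu,z)$ is supported on the fibre $\beta X\times\{z\}$, so $z$ is recovered from the support, and for fixed $z$ the map $P(e_z)$ is injective because $e_z$ is an embedding. A continuous injection of the compactum $P(\beta X)\times\beta Y$ into the Hausdorff space $P(\beta X\times\beta Y)$ is automatically a closed embedding.

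Next I would identify $j_{XY}$ with the restriction $\bar\jmath|(\Pt(X)\times Y)$. A direct check on Borel sets shows that for $\mu\in\Pt(X)$ and $y\in Y$ the measure $P(e_y)(\mu)$ induces on $X\times Y$ exactly the $\tau$-smooth measure associated with $\Pt(i_y)(\mu)=j_{XY}(\mu,y)$, so under the identification above $j_{XY}=\bar\jmath|(\Pt(X)\times Y)$. Moreover this restriction lands in $\Pt(X\times Y)$: if $\mu\in\Pt(X)$, $y\in Y$ and $B\supset X\times Y$ is Borel in $\beta X\times\beta Y$, then $e_y^{-1}(B)\supset X$ is Borel in $\beta X$, whence $P(e_y)(\mu)(B)=\mu(e_y^{-1}(B))=1$ because $\mu^*(X)=1$; thus $P(e_y)(\mu)^*(X\times Y)=1$. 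Since $\Pt(X)\times Y$ carries the subspace topology inherited from $P(\beta X)\times\beta Y$ (as $\Pt(X)\subset P(\beta X)$ and $Y\subset\beta Y$ are subspaces), the restriction of the embedding $\bar\jmath$ to $\Pt(X)\times Y$ is again an embedding into $\Pt(X\times Y)$; in particular $j_{XY}$ is continuous and a homeomorphism onto its image.

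It remains to prove that the image is closed, and this is the step I expect to require the most care. The key identity is
$$\bar\jmath\bigl(\Pt(X)\times Y\bigr)=\bar\jmath\bigl(P(\beta X)\times\beta Y\bigr)\cap\Pt(X\times Y).$$
The inclusion $\subset$ is clear. For $\supset$, suppose $\bar\jmath(\nu,z)\in\Pt(X\times Y)$. If $z\in\beta Y\bs Y$, then $\beta X\times\{z\}$ is a compact set disjoint from $X\times Y$ carrying all the mass of $\bar\jmath(\nu,z)$, forcing $\bar\jmath(\nu,z)^*(X\times Y)=0$, a contradiction; hence $z\in Y$. Given $z\in Y$, testing the outer measure against the Borel sets of the special form $C\times\beta Y$ with $X\subset C\subset\beta X$, for which $P(e_z)(\nu)(C\times\beta Y)=\nu(C)$, yields $\nu^*(X)\ge\bar\jmath(\nu,z)^*(X\times Y)=1$, so $\nu\in\Pt(X)$. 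Since $\bar\jmath$ is a closed embedding in $\Comp$, the set $\bar\jmath(P(\beta X)\times\beta Y)$ is closed in $P(\beta X\times\beta Y)$, and intersecting with the subspace $\Pt(X\times Y)$ shows that $j_{XY}(\Pt(X)\times Y)$ is closed in $\Pt(X\times Y)$. Therefore $j_{XY}$ is a closed embedding, the main obstacle being precisely the two outer-measure computations that recover $z\in Y$ and $\nu\in\Pt(X)$ from the membership $\bar\jmath(\nu,z)\in\Pt(X\times Y)$.
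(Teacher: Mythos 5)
Your proof is correct and is essentially the paper's own argument: the paper likewise realizes $j_{XY}$ inside the compact-level map $j_{\beta X,\beta Y}:P(\beta X)\times \beta Y\to P(\beta X\times\beta Y)$ (citing that it is an embedding of compacta rather than proving continuity and injectivity directly, as you do) and concludes from the same identity $j_{\beta X,\beta Y}(\Pt(X)\times Y)=j_{\beta X,\beta Y}(P(\beta X)\times \beta Y)\cap \Pt(X\times Y)$, which it declares obvious. Your two outer-measure computations (recovering $z\in Y$ and $\nu\in\Pt(X)$) merely make that ``obvious'' equality explicit, so the two arguments coincide in substance.
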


\begin{proof} Let $X, Y$ be Tychonoff spaces and $\beta X$
и $\beta Y$ be their Stone-\v Cech compactifications. According to \cite[VII.5.11 and
VII.5.18]{9}, the map $j_{\beta X, \beta Y}:P(\beta X)\times \beta Y\to
P(\beta\times \beta Y)$ is an embedding of compacta. Now the claim follows from the obvious equality $$j_{\beta X,\beta Y}(\Pt(X)\times
Y)=j_{\beta X,\beta Y}(P(\beta X)\times \beta Y)\cap \Pt(X\times Y).$$
\end{proof}

\begin{corollary}\label{c1.13} The functor $\Pt$ preserves homotopies, i.e. for any homotopy $H_t:X\to Y$ the homotopy $\Pt(H_t):\Pt(X)\to \Pt(Y)$ is continuous as a map $\Pt(H_{(\cdot)}):\Pt(X)\times [0,1]\to \Pt(Y)$.
\end{corollary}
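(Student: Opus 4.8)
The plan is to exhibit the homotopy map $\Pt(H_{(\cdot)})$ as a composition of two continuous maps, so that all of the genuine work is deferred to Proposition~\ref{p1.12}. First I would reinterpret the homotopy $H_t:X\to Y$ as a single continuous map $H:X\times[0,1]\to Y$ defined by $H(x,t)=H_t(x)$. Applying the functor $\Pt$ then produces a continuous map $\Pt(H):\Pt(X\times[0,1])\to\Pt(Y)$, continuity being automatic from the fact that $\Pt$ is a functor in $\Tych$ (Theorem~\ref{t0.1.}).

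Next I would invoke the closed embedding $j_{X,[0,1]}:\Pt(X)\times[0,1]\to\Pt(X\times[0,1])$ furnished by Proposition~\ref{p1.12}; in particular it is continuous. Forming the composition $\Pt(H)\circ j_{X,[0,1]}:\Pt(X)\times[0,1]\to\Pt(Y)$ yields a continuous map, being a composite of continuous maps.

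The one step that requires verification is the identity $\Pt(H)\circ j_{X,[0,1]}=\Pt(H_{(\cdot)})$. By the definition of $j_{X,[0,1]}$ we have $j_{X,[0,1]}(\mu,t)=\Pt(i_t)(\mu)$, where $i_t:X\to X\times[0,1]$ is the fiber embedding $i_t(x)=(x,t)$. Functoriality of $\Pt$ gives $\Pt(H)\bigl(\Pt(i_t)(\mu)\bigr)=\Pt(H\circ i_t)(\mu)$. Since $H\circ i_t(x)=H(x,t)=H_t(x)$, we have $H\circ i_t=H_t$, whence $\Pt(H)\circ j_{X,[0,1]}(\mu,t)=\Pt(H_t)(\mu)=\Pt(H_{(\cdot)})(\mu,t)$. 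This identifies $\Pt(H_{(\cdot)})$ with the continuous composition constructed above, which is exactly the assertion of the corollary.

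Since essentially all the difficulty has already been absorbed into Proposition~\ref{p1.12}, there is no substantive obstacle remaining; the only point demanding care is the functorial identity $\Pt(H)\circ\Pt(i_t)=\Pt(H\circ i_t)$, which is immediate from Theorem~\ref{t0.1.}.
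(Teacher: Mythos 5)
Your proposal is correct and follows exactly the paper's own argument: the paper likewise factors $\Pt(H_{(\cdot)})$ as the composition $\Pt(H)\circ j_{X,[0,1]}$, relying on Proposition~\ref{p1.12} for the continuity of $j_{X,[0,1]}$. Your explicit verification that $\Pt(H)\circ j_{X,[0,1]}=\Pt(H_{(\cdot)})$ via the identity $H\circ i_t=H_t$ is a detail the paper leaves tacit, but it is the same proof.
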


\begin{proof} Let $H:X\times[0,1]\to Y$ be a homotopy. Then the map $\Pt(H_{(\cdot)}):\Pt(X)\times [0,1]\to \Pt(Y)$ is continuous as a composition $\Pt(H_{(\cdot)})=\Pt(H)\circ j_{X,[0,1]}$ of continuous maps $j_{X,[0,1]}:\Pt(X)\times [0,1]\to \Pt(X\times [0,1])$ and
$\Pt(H):\Pt(X\times [0,1])\to \Pt(Y)$.
\end{proof}

Let us recall the definition of a natural transformation of functors. Let
$F_i:\mathcal C\to \mathcal C'$, $i=1,2$ be two covariant functors from a category
${\mathcal C}=(\mathcal O,\mathcal M)$ to a category $\mathcal C'=(\mathcal O',\mathcal M')$. A family $\Phi=\{\varphi_X:F_1(X)\to F_2(X),\; X\in\mathcal O\}\subset\mathcal M'$ of morphisms is called a natural transformation of the functor $F_1$ to the functor $F_2$, if for any morphism $f:X\to Y$ in the category $\mathcal C$ the following diagram is commutative:
$$
\begin{CD}
F_1(X)@>{\varphi_X}>> F_2(X)\\
@V{F_1(f)}VV @ VV {F_2(f)}V\\
F_1(Y)@>{\varphi_Y}>> F_2(Y).
\end{CD}
$$

For every Tychonoff space $X$ let us define a map $\delta_X:X\to \Pt(X)$ which assigns to each point $x\in X$  the Dirac measure
$\delta_X(x)$ concentrated at the point $x$.

\begin{theorem}\label{t1.14} The family $\delta=\{\delta_X\}$ defines a natural transformation of the identity functor $\Id:\Tych\to \Tych$ to the functor $\Pt:\Tych\to\Tych$, and, moreover, every component
$\delta_X:X\to\Pt(X)$ is a closed embedding.
\end{theorem}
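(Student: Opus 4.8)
The statement has two parts: first, that $\delta = \{\delta_X\}$ is a natural transformation from $\Id$ to $\Pt$, and second, that each component $\delta_X$ is a closed embedding. The plan is to dispatch naturality by a direct computation, and then to establish the embedding assertion by exhibiting $\delta_X$ as the restriction of the known closed embedding $\delta_{\beta X}\colon\beta X\to P(\beta X)$, exploiting the fact that $\Pt(X)$ sits inside $P(\beta X)$ as a subspace.

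For \emph{naturality}, I would fix a continuous map $f\colon X\to Y$ and verify that the square with top edge $\delta_X$, bottom edge $\delta_Y$, left edge $\Id(f)=f$, and right edge $\Pt(f)$ commutes. Chasing a point $x\in X$: going right-then-down gives $\Pt(f)(\delta_X(x))$, while going down-then-right gives $\delta_Y(f(x))$. Since $\Pt(f)$ acts by $\Pt(f)(\mu)(A)=\mu(f^{-1}(A))$ (as recalled in the introduction), we have for any Borel $A\subset Y$ that $\Pt(f)(\delta_X(x))(A)=\delta_X(x)(f^{-1}(A))$, which equals $1$ iff $x\in f^{-1}(A)$, i.e. iff $f(x)\in A$; this is exactly $\delta_Y(f(x))(A)$. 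Hence the two Dirac measures agree on all Borel sets and the square commutes.

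For the \emph{embedding}, I would recall that in the classical compact theory the map $\delta_{\beta X}\colon\beta X\to P(\beta X)$, $z\mapsto\delta_{\beta X}(z)$, is a topological embedding (indeed a closed one, its image being the set of $\{0,1\}$-valued measures). Since $X\subset\beta X$ and every Dirac measure $\delta_X(x)$ for $x\in X$ is a Radon measure concentrated on the compact one-point set $\{x\}\subset X$, we have $\delta_X(x)\in\HP(X)\subset\Pt(X)$ and $\delta_X=\delta_{\beta X}|X$. Because $\Pt(X)$ carries the subspace topology inherited from $P(\beta X)$ and $\delta_{\beta X}$ is an embedding, the restriction $\delta_X\colon X\to\Pt(X)$ is an embedding as well.

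It remains to argue that this embedding is \emph{closed}, and I expect this to be the main point requiring care. The image $\delta_X(X)$ consists exactly of those Dirac measures $\delta_{\beta X}(z)$ with $z\in X$, so $\delta_X(X)=\delta_{\beta X}(\beta X)\cap\Pt(X)$: indeed, the only measures in $\Pt(X)$ of the form $\delta_{\beta X}(z)$ are those with $z\in X$, since $\delta_{\beta X}(z)\in\Pt(X)$ forces $(\delta_{\beta X}(z))^*(X)=1$, which for a point mass means $z\in X$ (a point $z\in\beta X\bs X$ would give a compact $\{z\}\subset\beta X\bs X$ of full measure, contradicting $\mu^*(X)=1$). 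Since $\delta_{\beta X}(\beta X)$ is closed in $P(\beta X)$ and $\Pt(X)$ has the subspace topology, $\delta_X(X)$ is closed in $\Pt(X)$. Thus $\delta_X$ is a closed embedding, completing the proof.
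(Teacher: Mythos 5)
Your proof is correct, but it takes a genuinely different route from the paper, which in fact gives almost no argument at all: naturality is declared to be ``easily checked,'' and the closed-embedding assertion is dispatched by a citation to Varadarajan \cite[II, \S3]{4}, where the corresponding fact is established internally, i.e.\ for the weak topology on the space of $\tau$-smooth measures on $X$ itself. You instead argue externally through the Stone--\v Cech compactification, in the spirit of the rest of the paper: you realize $\delta_X$ as the restriction of the classical closed embedding $\delta_{\beta X}\colon\beta X\to P(\beta X)$, so that the embedding property is inherited by corestriction to the subspace $\Pt(X)\subset P(\beta X)$, and then you prove closedness via the key identity $\delta_X(X)=\delta_{\beta X}(\beta X)\cap\Pt(X)$, which holds because a Dirac measure $\delta_{\beta X}(z)$ with $z\in\beta X\setminus X$ gives full mass to the Borel set $\{z\}\subset\beta X\setminus X$ and hence violates $\mu^*(X)=1$. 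This is a nice point: $\delta_X(X)$ is generally \emph{not} closed in $P(\beta X)$ (its closure there is all of $\delta_{\beta X}(\beta X)$ when $X$ is dense and noncompact), so closedness really must be read inside $\Pt(X)$, exactly as your trace argument does, using compactness of $\delta_{\beta X}(\beta X)$. Your naturality computation via $\Pt(f)(\mu)(A)=\mu(f^{-1}(A))$ is precisely the routine verification the paper omits. In sum, your version is self-contained and consistent with the paper's method of deducing properties of $\Pt$ from the compact theory, whereas the paper's version simply outsources the substance to the cited reference; both are complete proofs.
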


\begin{proof} One can easily check that $\delta=\{\delta_X\}$ is a natural transformation of the functor $\Id$ to the functor $\Pt$. The fact that every map $\delta_X:X\to \Pt(X)$ is a closed embedding implies from
\cite[II, \S3]{4}.
\end{proof}

\begin{theorem}\label{t1.15} The functor $\Pt$ does not increase the density of Tychonoff spaces, i.e. $d(\Pt(X))\le d(X)$ for any infinite Tychonoff space $X$.\footnote{${}^2$ However, there exists a non-separable compact Hausdorff space $X$ with separable space $P(X)$ of probability measures, see  \cite{Tal80}, \cite{DP08}, \cite{APR14}.}
\end{theorem}

\begin{proof} Let $A\subset X$ be a dense subset of cardinality
$d(X)$ of an infinite Tychonoff space $X$. Then the set $B=\{\sum_{i=1}^nr_i\delta(x_i)\mid n\in\IN$ and for every $1\le i\le
n$ \ $r_i$ is rational and
$x_i\in A\}$ is dense in $\Pt(X)$. Moreover, it is clear that  the cardinality of the set $B$ is equal to $d(X)$.
\end{proof}

\begin{theorem}\label{t1.16} The functor $\Pt$ preserves the weight of Tychonoff spaces, i.e. $w(\Pt(X))=w(X)$ for any infinite Tychonoff space
$X$.
\end{theorem}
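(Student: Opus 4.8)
The plan is to prove the two inequalities $w(X)\le w(\Pt(X))$ and $w(\Pt(X))\le w(X)$ separately. The first one is immediate: by Theorem~\ref{t1.14} the Dirac map $\delta_X:X\to\Pt(X)$ is a (closed) embedding, so $X$ is homeomorphic to a subspace of $\Pt(X)$; since weight does not increase when passing to a subspace, $w(X)=w(\delta_X(X))\le w(\Pt(X))$.

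For the reverse inequality the naive estimate $w(\Pt(X))\le w(P(\beta X))=w(\beta X)$ is useless, since $w(\beta X)$ may vastly exceed $w(X)$ (already for a discrete $X$). The key point is that, by Theorem~\ref{t1.4} together with the remark following Corollary~\ref{c1.5}, the space $\Pt(X)$ does not depend on the chosen compactification of $X$: for every compactification $\gamma X$ of $X$ the space $\{\mu\in P(\gamma X)\mid\mu^*(X)=1\}$ is homeomorphic to $\Pt(X)$. I would therefore replace $\beta X$ by an economical compactification. Put $\tau=w(X)$, an infinite cardinal. By the Tychonoff embedding theorem, applied with a base of $X$ of cardinality $\tau$, the space $X$ embeds into the cube $[0,1]^\tau$; let $\gamma X$ be the closure of its image. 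Then $\gamma X$ is a compactification of $X$ with $w(\gamma X)\le w([0,1]^\tau)=\tau$.

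Now I would invoke the classical fact (see \cite[VII]{9}) that the functor $P$ does not raise weight on compacta, namely $w(P(K))=w(K)$ for every infinite compactum $K$. Only the inequality $w(P(K))\le w(K)$ is needed here, and it follows from the observation that the weak$^*$ topology on the bounded set $P(K)\subset C(K)^*$ is generated by the functionals $\mu\mapsto\mu(\varphi)$ with $\varphi$ ranging over a norm-dense subset of $C(K)$ of cardinality $d(C(K))=w(K)$. Combining everything and again using monotonicity of weight under subspaces, I obtain
$$w(\Pt(X))=w\bigl(\{\mu\in P(\gamma X)\mid\mu^*(X)=1\}\bigr)\le w(P(\gamma X))=w(\gamma X)\le\tau=w(X).$$
Together with the first inequality this yields $w(\Pt(X))=w(X)$.

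The only genuine obstacle is the temptation to argue directly through $\beta X$: the density of $C_b(X)$, which governs the weak$^*$ topology on $\Pt(X)$, can be strictly larger than $w(X)$, so no bound comes from it. Once the compactification-independence of $\Pt$ furnished by Theorem~\ref{t1.4} is used to substitute a weight-$\tau$ compactification $\gamma X$ for $\beta X$, the statement reduces to the standard behaviour of $P$ on compacta, and the remaining ingredients — the embedding into $[0,1]^\tau$ and the subspace-monotonicity of weight — are routine.
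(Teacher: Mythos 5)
Your proof is correct and follows essentially the same route as the paper: the paper likewise embeds $\Pt(X)$ into $P(cX)$ for a compactification $cX$ with $w(cX)=w(X)$ (citing \cite[3.5.2]{6} for its existence rather than constructing it via $[0,1]^\tau$), invokes the weight-preservation of $P$ on compacta \cite[VII.3.9]{9}, and uses the embedding $X\hookrightarrow\Pt(X)$ for the reverse inequality. The only difference is that you spell out proofs of the two cited ingredients, which the paper simply references.
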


\begin{proof} Let $X$ be an infinite Tychonoff space. According to \cite[3.5.2]{6}, there exists a compactification $c X$ of the space
$X$ such that $w(cX)=w(X)$. By Theorem 1.4, the space $\Pt(X)$ can be embedded in the compact space $\Pt(cX)=P(cX)$. Since the functor $P$ preserves  weight \cite[VII.3.9]{9}, $w(P(cX))=w(cX)=w(X)$. Consequently, $w(\Pt(X))\le w(X)$, and, since $X$ can be embedded in $\Pt(X)$, $w(X)\le w(\Pt(X))$. That is,
$w(\Pt(X))=w(X)$. The theorem is proved.
\end{proof}

The following theorem follows from \cite[II, \S4]{4} implies

\begin{theorem}\label{t1.17} The functor $\Pt$ preserve the class of metrizable spaces.
\end{theorem}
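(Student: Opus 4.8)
The plan is to realise the (intrinsically defined) weak topology of $\Pt(X)$ by an explicit metric, following the classical measure-theoretic route. Since the topology of $\Pt(X)$ is generated by the functionals $\mu\mapsto\mu(\varphi)$, $\varphi\in C_b(X)$, it depends only on the topology of $X$; hence I may fix a compatible \emph{bounded} metric $d$ on the metrizable space $X$ with $d\le 1$. On $\Pt(X)$ I would consider the bounded-Lipschitz (Kantorovich--Rubinstein) distance
$$
\beta(\mu,\nu)=\sup\bigl\{\,|\mu(\varphi)-\nu(\varphi)| : \|\varphi\|\le 1,\ \mathrm{Lip}(\varphi)\le 1\,\bigr\},
$$
where $\mathrm{Lip}(\varphi)$ denotes the Lipschitz constant of $\varphi$ relative to $d$. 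As $d\le 1$ the supremum never exceeds $2$, and the triangle inequality is immediate; the only nonformal point in checking that $\beta$ is a metric is that $\beta(\mu,\nu)=0$ forces $\mu=\nu$, which I postpone to the portmanteau step below.

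First I would prove a portmanteau lemma: for $\mu,\mu_\alpha\in\Pt(X)$ the convergence $\mu_\alpha(\varphi)\to\mu(\varphi)$ for every \emph{bounded Lipschitz} $\varphi$ is equivalent to weak convergence $\mu_\alpha(\varphi)\to\mu(\varphi)$ for every $\varphi\in C_b(X)$. The nontrivial implication is metric: for an open $U\subset X$ the functions $\varphi_n=\min\{1,\,n\,d(\cdot,X\bs U)\}$ are bounded and Lipschitz and increase pointwise to the indicator of $U$, so $\liminf_\alpha\mu_\alpha(U)\ge\liminf_\alpha\mu_\alpha(\varphi_n)=\mu(\varphi_n)$, and letting $n\to\infty$ (monotone convergence) gives $\liminf_\alpha\mu_\alpha(U)\ge\mu(U)$ for every open $U$; the usual portmanteau equivalences then yield convergence on all of $C_b(X)$. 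In particular, two measures agreeing on all bounded Lipschitz functions agree on all closed sets, and hence coincide by regularity; this supplies the missing separation property of $\beta$ and identifies the weak topology with the topology generated by the bounded Lipschitz functions.

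It remains to show that $\beta$ induces this topology, and here the two directions are very unequal. That $\beta$-convergence implies weak convergence is routine: for a fixed bounded Lipschitz $\varphi$ one has $|\mu_\alpha(\varphi)-\mu(\varphi)|\le\max\{\|\varphi\|,\mathrm{Lip}(\varphi)\}\cdot\beta(\mu_\alpha,\mu)$, so $\beta$-convergence forces convergence on every bounded Lipschitz function, whence weak convergence by the portmanteau lemma; thus the weak topology is coarser than the $\beta$-topology. The main obstacle is the converse, namely that weak convergence $\mu_\alpha\to\mu$ forces $\beta(\mu_\alpha,\mu)\to 0$: this is the passage from pointwise convergence of the functionals over the unit ball of bounded Lipschitz functions to convergence uniform over that ball. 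Here I would invoke the fundamental structural fact that every $\tau$-smooth measure on a metric space is concentrated on a separable subspace; using this together with the regularity established in Remark~\ref{r1.2}, I would exhibit, at each $\mu\in\Pt(X)$, a \emph{countable} weak neighbourhood base built from the separable support of $\mu$ (Lipschitz cut-offs of a countable dense subset, together with a function controlling the total mass near the support) and show it is equivalent to the family of $\beta$-balls. This localises the whole problem to a separable subspace carrying $\mu$, where the classical fact that the weak topology on measures is metrizable (Prokhorov/bounded-Lipschitz) applies; the separable-support reduction is exactly the step that would fail for arbitrary Borel measures on a non-separable metric space, and it is the crux of the argument.

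Granting these steps, $\beta$ is a metric compatible with the topology of $\Pt(X)$, so $\Pt(X)$ is metrizable whenever $X$ is, as the theorem asserts. This is consistent with the invariants already computed: on a separable metrizable $X$ the construction reproduces the usual Prokhorov metrization and yields a separable metrizable $\Pt(X)$, while in general the compatible metric must respect $d(\Pt(X))=d(X)$ and $w(\Pt(X))=w(X)$ from Theorems~\ref{t1.15} and \ref{t1.16}, as well as the closed copy of $X$ inside $\Pt(X)$ furnished by $\delta_X$ (Theorem~\ref{t1.14}).
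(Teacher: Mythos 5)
Your proposal is correct in outline, but it is a genuinely different route from the paper's: the paper offers no internal argument at all for Theorem~\ref{t1.17} and simply deduces it from Varadarajan \cite[II, \S4]{4}, where metrizability of the space of $\tau$-smooth measures on a metrizable space is established, whereas you reconstruct the classical Dudley-style proof, exhibiting the bounded-Lipschitz distance $\beta$ as an explicit compatible metric. Both arguments ultimately rest on the same structural fact you isolate --- every $\tau$-smooth measure on a metric space has separable support --- and your sketch correctly identifies this as the crux and as the point where arbitrary Borel measures on non-separable spaces would break the statement. One step of your wording deserves a caveat: the claim that the problem ``localises to a separable subspace carrying $\mu$, where the classical fact applies'' is not literally a reduction, because measures $\nu$ in a weak neighbourhood of $\mu$ need not be concentrated on (or near) $\supp\mu$, so you cannot just quote the separable-space metrization for $\Pt(\supp\mu)$. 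What actually closes the argument is the quantitative form of your ``cut-offs plus mass control'': since $\mu$ restricted to its separable support nearly sits on a \emph{totally bounded} set $A$ (cover the support by countably many $\tfrac1m$-balls for each $m$ and truncate), Arzel\`a--Ascoli gives a finite sup-norm $\e$-net for the BL unit ball restricted to $A$, these finitely many functions extend to Lipschitz functions on all of $X$ by the McShane formula $\tilde\psi(x)=\sup_{a\in A}\bigl(\psi(a)-d(x,a)\bigr)$ clipped to $[-1,1]$, and the weakly open condition $\nu(A^\e)>\mu(A)-\e$ (open because for $\tau$-smooth $\nu$ the value $\nu(U)$ is a supremum of $\nu(f)$ over Lipschitz $f\le\chi_U$) controls the mass of all competitors off $A^\e$; together these show every $\beta$-ball around $\mu$ contains a weak neighbourhood determined by finitely many BL functions, which combined with your portmanteau direction identifies the two topologies on convergent nets and hence outright. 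Note also that total boundedness of $A$, not mere separability of the support, is what Arzel\`a--Ascoli needs, since the BL unit ball is not sup-norm totally bounded over a merely separable set. With that step made precise your proof is complete and self-contained; what it buys over the paper's citation is an explicit metric and a transparent view of exactly where $\tau$-smoothness enters, at the cost of reproving a known result of \cite{4}.
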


Let us recall that a $p$-{\em paracompact} space is a preimage of a metrizable space under a perfect map \cite{11}. Theorems \ref{t1.3} and \ref{t1.7} imply

\begin{theorem}\label{t1.18} The functor $\Pt$ preserves the class of $p$-paracompact space.
\end{theorem}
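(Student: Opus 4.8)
The plan is to reduce the statement to three preservation properties of $\Pt$ that are already available. Recall that a Tychonoff space $X$ is $p$-paracompact exactly when it can be written as $X=g^{-1}(M)$ for a perfect map $g$ with $M$ metrizable; concretely, one may take a perfect map $g\colon X\to M$ of $X$ onto a metrizable space $M$, so that trivially $X=g^{-1}(M)$. I would start from such a presentation and push it through the functor $\Pt$, reassembling a presentation of the same shape for $\Pt(X)$.

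Applying $\Pt$ to $g$ gives a map $\Pt(g)\colon\Pt(X)\to\Pt(M)$, and the three ingredients fit together as follows. By Theorem~\ref{t1.3} the functor $\Pt$ sends perfect maps to perfect maps, so $\Pt(g)$ is perfect. By Theorem~\ref{t1.7} the functor $\Pt$ preserves preimages, whence
$$
\Pt(X)=\Pt\bigl(g^{-1}(M)\bigr)=\Pt(g)^{-1}\bigl(\Pt(M)\bigr).
$$
By Theorem~\ref{t1.17} the space $\Pt(M)$ is metrizable, because $M$ is. Together these display $\Pt(X)$ as the preimage of the metrizable space $\Pt(M)$ under the perfect map $\Pt(g)$, which is exactly the definition of $p$-paracompactness for $\Pt(X)$.

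The step that carries the real weight is the metrizability of the target: the perfect-preimage description produced by Theorems~\ref{t1.3} and~\ref{t1.7} is only useful once we know $\Pt(M)$ lies in the metrizable category, and this is precisely what Theorem~\ref{t1.17} supplies. Beyond that, the remaining points are routine bookkeeping. If the chosen $g$ fails to be surjective, its image is closed (perfect maps are closed), and a closed subspace of a metrizable space is metrizable, so one may replace $M$ by $g(X)$ without leaving the metrizable category. And the set equality coming from Theorem~\ref{t1.7} is automatically compatible with the map $\Pt(g)$, directly from the definition of $\Pt(g)$ on Borel sets, so no further identification is needed. I therefore do not anticipate any genuine obstacle once the definition of $p$-paracompactness is matched with the perfect-preimage description assembled from the three functorial properties.
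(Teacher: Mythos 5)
Your proof is correct and is essentially the paper's own argument: the paper proves Theorem~\ref{t1.18} in one line from Theorem~\ref{t1.3} (preservation of perfect maps) together with the preceding results, exactly as you do, with metrizability of $\Pt(M)$ supplied by Theorem~\ref{t1.17}. One small remark: your appeal to Theorem~\ref{t1.7} is vacuous, since $\Pt(g)^{-1}(\Pt(M))=\Pt(X)$ holds trivially because $\Pt(g)$ maps $\Pt(X)$ into $\Pt(M)$, and the surjectivity issue you raise concerns $\Pt(g)$ rather than $g$ (the paper notes $\Pt$ need not preserve surjections), but your own fix applies verbatim: the image of the perfect map $\Pt(g)$ is a closed, hence metrizable, subspace of $\Pt(M)$.
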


Let $X$ be a topological space. For every countable ordinal number
$\alpha$ we will define families $\F_\alpha(X)$ и $\G_\alpha(X)$ of Borel sets in the following way: the family $\F_0(X)$ (family $\G_0(X)$) consists of all closed (open) subsets of the space $X$, the family
$\F_\alpha(X)$ (the family $\G_\alpha(X)$) consists of all countable unions (countable intersections) of sets from $\bigcup_{\xi<\alpha}\F_\xi(X)$ (from
$\bigcup_{\xi<\alpha}\G_\xi(X)$~) for odd ordinals $\alpha$ and all countable intersections (countable unions) of the sets from
$\bigcup_{\xi<\alpha}\F_\xi(X)$  (from $\bigcup_{\xi<\alpha}\G_\xi(X)$~) for even ordinals. It is obvious that for any $A\in\F_\xi(X)$ ( $A\in \G_\xi(X)$~)
we have that $X\bs A\in\G_\xi(X)$   ($X\bs A\in\F_\xi(X)$~).

For a topological space $X$ by $\mathcal B_0(X)$ we denote the
$\sigma$-algebra of all Baire subsets of $X$, i.e. the smallest
$\sigma$-algebra containing all functionally closed subsets of $X$. Baire subsets can be classified in the following way: $\M_0(X)$ (~$\A_0(X)$~) is the class of all functionally closed (functionally open) subsets of the space $X$. For every countable ordinal $\alpha$,
$\M_\alpha(X)$ (~$\A_\alpha(X)$~) is the family of subsets of $X$, which can be presented as countable intersections (unions) of sets from
$\bigcup_{\xi<\alpha}\A_\xi(X)$ (from $\bigcup_{\xi<\alpha}\M_\xi(X)$~).

For a Borel subset $A$ of a Tychonoff space $X$ let us define a function $\dot \chi_A:\Pt(X)\to [0,1]$ with the help of the formula $\dot \chi_A(\mu)=\mu^*(A)$.

\begin{lemma}\label{l1.19} Let $X$ be a Tychonoff space, $A$ be
a subset of $X$, $\alpha$ be an even ordinal, $\xi$ be any ordinal, and $a\in\IR$.
Then
\begin{enumerate}
\item If $A\in \M_\xi(X)$, then
$\dot\chi_A^{-1}([a,\infty))=\{\mu\in\Pt(X)\mid \mu^*(A)\ge
a\}\in\M_\xi(\Pt(X))$.
\item If $A\in \F_\alpha(X)$, then
$\dot\chi_A^{-1}([a,\infty))=\{\mu\in\Pt(X)\mid \mu^*(A)\ge
a\}\in\F_\alpha(\Pt(X))$.
\item If $A\in \A_\xi(X)$, then
$\dot\chi_A^{-1}((a,\infty))=\{\mu\in\Pt(X)\mid \mu^*(A)>a\}\in\A_\xi(\Pt(X))$.
\item If $A\in \G_\alpha(X)$, then
$\dot\chi_A^{-1}((a,\infty))=\{\mu\in\Pt(X)\mid \mu^*(A)> a\}\in\G_\alpha(\Pt(X))$.
\end{enumerate}
\end{lemma}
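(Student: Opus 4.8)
The plan is to prove all four items by transfinite induction on the level of the hierarchy, pairing the ``$\ge a$'' items~(1) and~(2) with their ``$>a$'' duals~(3) and~(4). The basic reduction is that for a Borel set $A\subset X$ one has $\dot\chi_A(\mu)=\mu^*(A)=\tilde\mu(A)$, where $\tilde\mu$ is the $\tau$-smooth probability measure on $X$ induced by $\mu$ (Remark~\ref{r1.2}); this lets me invoke countable additivity of $\tilde\mu$ (continuity from above and from below) in the inductive steps, while the base cases are handled by semicontinuity of $\mu\mapsto\mu(F)$ on the compactum $P(\beta X)$.

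For the base cases I would first treat a closed set $A\subset X$. By Lemma~\ref{l1.1}, $\mu^*(A)=\mu(\bar A)$, where $\bar A$ is the closure of $A$ in $\beta X$ (for any Borel $C\supset A$ the sets $\bar A\cap C$ and $\bar A$ meet $X$ in $A$, so $\mu(C)\ge\mu(\bar A\cap C)=\mu(\bar A)$, while $C=\bar A$ is admissible). Since $\mu\mapsto\mu(\bar A)$ is upper semicontinuous on $P(\beta X)$, the set $\{\mu\in\Pt(X):\mu^*(A)\ge a\}$ is closed in $\Pt(X)$, which is item~(2) for $\alpha=0$; passing to complements via $\mu^*(A)=1-\mu^*(X\bs A)$ gives item~(4) for open $A$. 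For a functionally closed set $A=\{\varphi=0\}$ with $\varphi\in C_b(X)$, I extend $\varphi$ to $\bar\varphi\in C(\beta X)$, put $Z=\{\bar\varphi=0\}$ (so $\mu^*(A)=\mu(Z)$ by Lemma~\ref{l1.1}), and approximate $\chi_Z$ from above by the continuous functions $g_n=\max(1-n|\bar\varphi|,0)\downarrow\chi_Z$. Then $\mu^*(A)=\inf_n\int g_n\,d\mu$, so $\{\mu^*(A)\ge a\}=\bigcap_n\{\mu:\int g_n\,d\mu\ge a\}$ is a countable intersection of zero-sets of continuous functions on $\Pt(X)$, hence itself functionally closed, i.e.\ in $\M_0(\Pt(X))$; this is item~(1) for $\xi=0$, and complementation yields item~(3) for cozero $A$.

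For the inductive step in the Baire case (items~1 and~3) I would run a simultaneous induction in which $\M$ is always paired with ``$\ge a$'' and $\A$ with ``$>a$''. If $A=\bigcap_nA_n\in\M_\xi$ with each $A_n\in\bigcup_{\eta<\xi}\A_\eta$, I replace the $A_n$ by the decreasing sets $B_N=\bigcap_{n\le N}A_n$, which still lie in $\bigcup_{\eta<\xi}\A_\eta$ since each $\A_\eta$ is closed under finite intersections; continuity of $\tilde\mu$ from above gives $\mu^*(A)=\inf_N\mu^*(B_N)$, whence $\{\mu^*(A)\ge a\}=\bigcap_{N,m}\{\mu^*(B_N)>a-1/m\}$. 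By the inductive hypothesis (item~3 below level $\xi$) every factor belongs to $\bigcup_{\eta<\xi}\A_\eta(\Pt(X))$, so the countable intersection lies in $\M_\xi(\Pt(X))$. The dual computation, using $\sup$, finite unions of $\M$-sets, and the conversion $\{\mu^*(A)>a\}=\bigcup_{N,m}\{\mu^*(B_N)\ge a+1/m\}$, gives item~(3). What makes this clean is that in the Baire hierarchy $\M$ always intersects, $\A$ always unions, and each references the dual class below, so no parity condition is needed.

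The Borel case (items~2 and~4) is where the real care is required, and it is the step I expect to be the main obstacle, because $\F_\alpha$ is defined by \emph{intersections at even and unions at odd} levels (and references $\F$, not the dual). To make the induction close I would prove the strengthened statement at \emph{all} ordinals $\beta$: at even $\beta$ the stated inequalities ($\ge a$ for $\F_\beta$, $>a$ for $\G_\beta$), and at odd $\beta$ the \emph{flipped} ones ($>a$ for $\F_\beta$, $\ge a$ for $\G_\beta$). This flip is exactly what aligns ``$\inf\leftrightarrow$ countable intersection'' and ``$\sup\leftrightarrow$ countable union'' with the parity-dependent defining operation; the routine conversions $\{f\ge a\}=\bigcap_m\{f>a-1/m\}$ and $\{f>a\}=\bigcup_m\{f\ge a+1/m\}$ raise the level by at most one, which is absorbed since each class sits inside the next. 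Carrying out this bookkeeping — matching the inequality direction to the alternating union/intersection and checking that the resulting level never exceeds $\alpha$ — and then reading off the even cases yields items~(2) and~(4).
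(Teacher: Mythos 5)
Your proof is correct, and it runs on the same engine as the paper's: at level zero you approximate the characteristic function of an extension of $A$ to $\beta X$ by continuous functions (your $g_n\searrow\chi_Z$ for a zero set is the mirror image of the paper's $f_n\nearrow\chi_{\tilde U}$ for a cozero set, each obtaining the other case by complementation), you identify $\mu^*(A)$ with $\tilde\mu(A)$ via Lemma~\ref{l1.1} and Remark~\ref{r1.2}, and you climb the hierarchy by transfinite induction using continuity of $\tilde\mu$ along monotone sequences plus $\tfrac1m$-slack conversions. Where you genuinely diverge is in the organization of the induction. The paper normalizes $A\in\M_\xi(X)$ into the two-level form $A=\bigcap_n\bigcup_m A_n^m$ with all $A_n^m$ in lower \emph{multiplicative} classes and uses the single identity $\{\mu\mid\mu^*(A)\ge a\}=\bigcap_n\bigcup_m\{\mu\mid \mu^*(A_n^m)\ge a-\tfrac1n\}$, which packages continuity from below and from above simultaneously and invokes the inductive hypothesis only in its non-strict form (item~(1)); items~(3) and~(4) are then obtained purely by complementation, with no induction of their own. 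You instead descend one level at a time and run a simultaneous induction pairing $\M$ with ``$\ge$'' and $\A$ with ``$>$'' --- equally valid, trading the paper's WLOG normalization for the dual hypothesis. The same contrast explains the Borel case: the paper's two-level trick (flattening odd-level unions into the inner $\bigcup_m$) means its induction only ever references even levels, which is why it can state items~(2) and~(4) for even $\alpha$ alone and dismiss the proof with ``in a similar fashion''; your strengthened statement over \emph{all} ordinals, with inequalities flipped at odd levels, makes explicit exactly the bookkeeping the paper suppresses, and your parity observation is the right justification that the one-level cost of a strict/non-strict conversion is absorbed (if $\xi<\alpha$ and $\xi,\alpha$ have the same parity, then $\xi+1\ne\alpha$, so $\xi+1<\alpha$). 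Both treatments gloss the same standard closure facts (finite unions and intersections within the classes, and their cumulativity, needed for the ``without loss of generality'' monotonizations), so your level of rigor matches the original; if anything, your version documents the odd-level mechanics that the paper leaves to the reader.
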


\begin{proof} First, let us prove Statement (3) of this lemma. Let $U$ be a functionally open subset of $X$ and $\tilde U$ be a functionally open subset of $\beta X$ such that $\tilde U\cap X=U$.
It is easy to construct a sequence $\{f_n:\beta X\to [0,1]\}_{n=1}^\infty$ of continuous functions converging to the characteristic function $\chi_{\tilde U}:\beta X\to
[0,1]$
pointwise, such that $f_n|\beta X\bs \tilde U\equiv 0$, $f_n^{-1}(\{1\})\subset f^{-
1}_{n+1}(\{1\}),\; n\in\IN$,
and $\bigcup_{n=1}^\infty f_n^{-1}(\{1\})=\tilde U$. By 1.1,
$\mu^*(U)=\mu(\tilde U)$ for any $\mu\in\Pt(X)$. Consequently,
$$
\begin{aligned}
\dot\chi_U^{-1}((a,\infty))&=\{\mu\in\Pt(X)\mid \mu^*(U)>a\}=\\
&=\{\mu\in\Pt(X)\mid \mu(\tilde U)>a\}=\bigcup_{n=1}^\infty
\{\mu\in\Pt(X)\mid \mu(f_n)>a\}
\end{aligned}
$$ is a functionally open set in $\Pt(X)$.

If $Z$ is a functionally closed subset of $X$, then the set
$X\bs Z$ is functionally open. Choose   a functionally open subset $\tilde U$ of $\beta X$ such that $\tilde U\cap X=X\bs Z$,  and observe that the set $\{\mu\in\Pt(X)\mid \mu(\tilde U)>1-a\}$ is functionally open in $\Pt(X)$. Then
$$
\begin{aligned}
\dot\chi_Z([a,\infty ))&=\{\mu\in\Pt(X)\mid \mu^*(Z)\ge
a\}=\\
&=\{\mu\in\Pt(X)\mid \mu(\beta X\bs\tilde U)\ge a\}=\{\mu\in\Pt(X)\mid
\mu(\tilde U)\le 1-a\}
\end{aligned},
$$ being the complement to a functionally open set
$\{\mu\in\Pt(X)\mid\mu(\tilde U)>1-a\}$, is functionally closed in $\Pt(X)$.

Let us now show that for any closed set $F\subset X$, the set
$\dot\chi_F^{-1}([a,\infty ))=\{\mu\in\Pt(X)\mid \mu^*(F)\ge a\}$ is closed in $\Pt(X)$. Let $\bar F$ be the closure of the set $F$ in $\beta X$. Then (see \cite{4}) the set $\{\mu\in\Pt(\beta X)\mid \mu(\bar F)\ge a\}$ is closed in
$P(\beta X)$. By Lemma~\ref{l1.1}, $\mu^*(F)=\mu(\bar F)$ for each measure $\mu\in\Pt(X)$. Therefore, the set $$\Pt(X)\cap \{\mu\in
P(\beta X)\mid \mu(\bar F)\ge a\}=\{\mu\in\Pt(X)\mid \mu^*(F)\ge
a\}=\dot\chi^{-1}_F([a,\infty ))$$ is closed in $\Pt(X)$.
Switching to complements, we prove that for any open set $G$ in $X$ the set
$\dot \chi_G^{-1}((a,\infty))=\{\mu\in\Pt(X)\mid\mu^*(G)> a\}$ is open in
$\Pt(X)$.

Thus, for $\xi=\alpha=0$ the lemma is proved.

Now let $\xi$ be an ordinal. Let us assume that for every
$a\in\IR$ and $A\in\M_{\xi'}(X)$, where $\xi'<\xi$, it has been proved that
$\dot\chi_A^{-1}([a,\infty))\in\M_{\xi'}(\Pt(X))$. Let $A\in\M_\xi(X)$.
Then $A=\bigcap_{n=1}^\infty\bigcup_{m=1}^\infty A_n^m$, where $A_n^m\in
\bigcup_{\xi'<\xi}\M_{\xi'}(X)$. Without loss of generality, suppose that for every $n\in\IN$, $A_n^1\subset A_n^2\subset\dots $ and
$\bigcup_{m=1}^\infty A_1^m\supset\bigcup_{m=1}^\infty
A_2^m\supset\dots\,$.
 One can easily observe that
$\dot\chi_A^{-1}([a,\infty))=\{\mu\in\Pt(X)\mid\mu^*(A)\ge
a\}=\bigcap_{n=1}^\infty\bigcup_{m=1}^\infty\{\mu\in\Pt(X)\mid
\mu^*(A_n^m)\ge a-\frac1n\}$. By the induction hypothesis for every $n,m\in\IN$ \ $\{\mu\in\Pt(X)\mid \mu^*(A_n^m)\ge
a-\frac1n\}\in\M_{\xi'}(\Pt(X))$, where $\xi'<\xi$. Therefore,
$\dot\chi_A^{-1}([a,\infty))\in \M_\xi(\Pt(X))$.

In a similar fashion for any even ordinal
$\alpha$ we can prove that
$A\in\F_\alpha(X)$ implies $\dot\chi_A^{-1}([a,\infty))\in\F_\alpha(\Pt(X))$.

If $A\in\A_\xi(X)$, then
$$
\begin{aligned}
\Pt(X)\bs\dot\chi_A^{-1}((a,\infty
))&=\{\mu\in\Pt(X)\mid \mu^*(A)\notin (a,\infty)\}=\{\mu\in\Pt(X)\mid
\mu^*(A)\le a\}=\\
&=\{\mu\in\Pt(X)\mid \mu^*(X\bs A)\ge 1-a\}=\dot\chi_{X\bs
A}^{-1}([1-a,\infty)).
\end{aligned}.
$$ Since $A\in \A_\xi(X)$, $X\bs
A\in\M_\xi(X)$, and thus $\dot\chi^{-1}_{X\bs
A}([1-a,\infty))\in\M_\xi(\Pt(X))$ and
$\dot\chi_A^{-1}((a,\infty))=\Pt(X)\bs\dot\chi^{-1}_{X\bs
A}([1-a,\infty))\in\A_\xi(X)$.

In a similar fashion we show that for any even ordinal  $\alpha$ and
$A\in\G_\alpha(X)$ we get $\dot\chi_A^{-1}((a,\infty))\in\G_\alpha(\Pt(X))$. The lemma is proved.
\end{proof}

\begin{corollary}\label{c1.20} The functor $\Pt$ preserves \v Cech-complete spaces.
\end{corollary}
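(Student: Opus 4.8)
The plan is to realize $\Pt(X)$ as a $G_\delta$-subset of the compact space $P(\beta X)$ and then invoke the fact that \v Cech-completeness is inherited by $G_\delta$-subspaces \cite[3.9.6]{6}: since $P(\beta X)$ is compact and hence \v Cech-complete, this will finish the proof. So the whole task reduces to exhibiting $\Pt(X)$ as a countable intersection of open subsets of $P(\beta X)$.

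First I would recall the standard characterization that a Tychonoff space $X$ is \v Cech-complete if and only if it is a $G_\delta$-set in $\beta X$ \cite[3.9.1]{6}. Rephrasing this through the remainder, $\beta X\bs X$ is then an $F_\sigma$-set; and since every closed subset of the compact space $\beta X$ is compact, I would fix a representation $\beta X\bs X=\bigcup_{n=1}^\infty K_n$ with each $K_n$ a compact subset of $\beta X$.

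The heart of the argument is to convert the defining outer-measure condition of $\Pt(X)$ into a countable condition on the compacta $K_n$. Because $X$ is \v Cech-complete, it is Borel in $\beta X$, so $\mu^*(X)=\mu(X)$ for every $\mu\in P(\beta X)$; hence $\mu\in\Pt(X)$ if and only if $\mu(\beta X\bs X)=0$, and, by monotonicity together with countable subadditivity, if and only if $\mu(K_n)=0$ for all $n$. This gives $\Pt(X)=\bigcap_{n=1}^\infty\{\mu\in P(\beta X)\mid\mu(K_n)=0\}$. To finish, I would use that for each closed set $K_n$ the assignment $\mu\mapsto\mu(K_n)$ is upper semicontinuous — equivalently, $\{\mu\in P(\beta X)\mid\mu(K_n)\ge a\}$ is closed for every $a$, which is exactly the fact already used in the proof of Lemma~\ref{l1.19} (see \cite{4}). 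Thus $\{\mu\mid\mu(K_n)<1/j\}$ is open, $\{\mu\mid\mu(K_n)=0\}=\bigcap_{j=1}^\infty\{\mu\mid\mu(K_n)<1/j\}$ is $G_\delta$, and so $\Pt(X)$, being a countable intersection of $G_\delta$-sets, is $G_\delta$ in $P(\beta X)$.

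The step I expect to be the main obstacle — or at least the one demanding care — is the reformulation $\mu^*(X)=1\Leftrightarrow\mu(K_n)=0\ (\forall n)$: it is essential that $X$ be genuinely Borel in $\beta X$ (this is where \v Cech-completeness enters) and that the sets $K_n$ be \emph{closed}, so that upper semicontinuity of $\mu\mapsto\mu(K_n)$ is available. It is tempting to try a shorter route by showing $X\in\M_1(\beta X)$ and applying Lemma~\ref{l1.19}(1) with $a=1$; I would avoid this, since a \v Cech-complete space need not be a cozero-$G_\delta$ in $\beta X$ — for instance $X=[0,\omega_1)$ has $\beta X\bs X$ equal to a single point that is not a zero set — so it is the more robust description of $\Pt(X)$ through the compacta $K_n$ that makes the argument go through.
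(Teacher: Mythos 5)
Your proof is correct and is essentially the paper's own argument: the paper likewise reduces everything to exhibiting $\Pt(X)=\{\mu\in P(\beta X)\mid \mu(X)=1\}$ as a $G_\delta$-set in the compact space $P(\beta X)$, using the representation $X=\bigcap_{n=1}^\infty U_n$ with $U_n$ open in $\beta X$ together with the openness of sets of the form $\{\mu\mid \mu(U_n)>a\}$ established in Lemma~\ref{l1.19}, and then concludes by Engelking's theorem. Your passage to the complementary compacta $K_n$ covering $\beta X\bs X$, with the closedness of $\{\mu\mid \mu(K_n)\ge a\}$, is just the complemented (dual) form of the same computation, not a genuinely different route.
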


\begin{proof} Let $X$ be a \v Cech-complete Tychonoff space. Then $X=\bigcap_{n=1}^\infty U_n$ is a $G_\delta $-set in $\beta X$
(here $U_n\subset \beta X$, $n\in\IN$, are open sets in $\beta  X$). Then $$\Pt(X)=\{\mu\in P(\beta
X)\mid\mu(X)=1\}=\bigcup_{n=1}^\infty \{\mu\in P(\beta X)\mid
\mu(U_n)>1-\frac1n\}.$$ By Lemma 1.18 the sets $\{\mu\in P(\beta
X)\mid\mu(U_n)>1-\frac1n\}$ are open in $P(\beta X)$, which means that $\Pt(X)$ is a $G_\delta $-set in $P(\beta X)$. By \cite[3.9.1]{6}, the space
$\Pt(X)$ is \v Cech-complete.
\end{proof}

\begin{corollary}\label{c1.21} If $A$ is a Baire subset of a Tychonoff space $X$, then the function $\dot\chi_A:\Pt(X)\to[0,1]$ is measurable with respect to the $\sigma $-algebra of Baire subsets of $\Pt(X)$.
\end{corollary}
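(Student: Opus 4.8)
The plan is to reduce the claim to a single case of Lemma~\ref{l1.19}, exploiting the fact that the Baire $\sigma$-algebra of $X$ is exhausted by the transfinite hierarchy $\{\M_\xi(X)\}$. The starting observation is that, to verify measurability of $\dot\chi_A$ with respect to the Baire $\sigma$-algebra of $\Pt(X)$, it is enough to show that the preimage $\dot\chi_A^{-1}([a,\infty))$ is a Baire subset of $\Pt(X)$ for every $a\in\IR$. Indeed, the family of those $B\subset[0,1]$ for which $\dot\chi_A^{-1}(B)$ is Baire is a $\sigma$-algebra (preimages commute with the countable set operations, and the Baire sets of $\Pt(X)$ form a $\sigma$-algebra), and the half-lines $[a,\infty)\cap[0,1]$ generate the Borel $\sigma$-algebra of $[0,1]$.

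First I would record the standard exhaustion $\mathcal B_0(X)=\bigcup_{\xi<\omega_1}\M_\xi(X)$, so that the given Baire set $A$ lies in $\M_\xi(X)$ for some fixed countable ordinal $\xi$. This is just the usual transfinite construction of the Baire classes: closing the functionally closed sets under countable unions and intersections stabilizes at the first uncountable ordinal and produces exactly $\mathcal B_0(X)$. Having fixed such a $\xi$, I would invoke Lemma~\ref{l1.19}(1), which for this $\xi$ and every $a\in\IR$ yields $\dot\chi_A^{-1}([a,\infty))=\{\mu\in\Pt(X)\mid\mu^*(A)\ge a\}\in\M_\xi(\Pt(X))\subset\mathcal B_0(\Pt(X))$; that is, each such preimage is a Baire subset of $\Pt(X)$.

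Combining the two ingredients finishes the argument: by the generating remark of the first paragraph, the fact that all half-line preimages are Baire forces every Borel subset of $[0,1]$ to pull back to a Baire subset of $\Pt(X)$, which is precisely the asserted Baire-measurability of $\dot\chi_A$. I do not expect any genuine obstacle here: the entire transfinite bookkeeping has already been discharged in the proof of Lemma~\ref{l1.19}, level by level along the Baire hierarchy. The only point worth stating explicitly is the exhaustion $\mathcal B_0(X)=\bigcup_{\xi<\omega_1}\M_\xi(X)$, which guarantees the existence of a countable ordinal $\xi$ with $A\in\M_\xi(X)$; once $\xi$ is fixed, Lemma~\ref{l1.19} supplies the conclusion uniformly in $a$, and the remaining $\sigma$-algebra argument on the target $[0,1]$ is routine.
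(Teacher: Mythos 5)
Your proposal is correct and follows exactly the route the paper intends: the paper states Corollary~\ref{c1.21} without proof as an immediate consequence of Lemma~\ref{l1.19}, and your argument just fills in the standard details --- the exhaustion $\mathcal B_0(X)=\bigcup_{\xi<\omega_1}\M_\xi(X)$ of the Baire $\sigma$-algebra by the classes $\M_\xi$, the application of Lemma~\ref{l1.19}(1) to get $\dot\chi_A^{-1}([a,\infty))\in\M_\xi(\Pt(X))\subset\mathcal B_0(\Pt(X))$, and the routine good-sets argument that the half-lines $[a,\infty)$ generate the Borel $\sigma$-algebra of $[0,1]$. Nothing is missing.
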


\begin{theorem}\label{t1.22} The functor $\Pt$ preserves Baire subsets. Moreover, for any ordinal number $\xi$, if $A\in\M_\xi(X)$, then
$\Pt(A)\in\M_\xi(\Pt(X))$; for any even ordinal $\alpha$, if $A\in
\F_\alpha(X)$, then $\Pt(A)\in\F_\alpha(\Pt(X))$.
\end{theorem}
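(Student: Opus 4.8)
The plan is to recognize that the set $\Pt(A)$ is nothing other than a single level set of the function $\dot\chi_A$ studied in Lemma~\ref{l1.19}, so that the theorem drops out as an immediate corollary of that lemma with the parameter $a=1$. The real work has already been done in establishing Lemma~\ref{l1.19}; here I only need to perform the identification correctly.

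First I would recall that, thanks to the embedding-preservation of $\Pt$ (Theorem~\ref{t1.4}) and the convention fixed after Corollary~\ref{c1.5}, the space $\Pt(A)$ is identified with the subset $\{\mu\in\Pt(X)\mid \mu^*(A)=1\}$ of $\Pt(X)$. Since the function $\dot\chi_A(\mu)=\mu^*(A)$ takes values in $[0,1]$, the requirement $\mu^*(A)=1$ is the same as $\mu^*(A)\ge 1$, so that
$$
\Pt(A)=\{\mu\in\Pt(X)\mid \mu^*(A)\ge 1\}=\dot\chi_A^{-1}\big([1,\infty)\big).
$$
This is exactly the preimage analysed in parts~(1) and~(2) of Lemma~\ref{l1.19}.

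Next I would simply apply Lemma~\ref{l1.19} at $a=1$. Part~(1) yields that $A\in\M_\xi(X)$ forces $\Pt(A)=\dot\chi_A^{-1}([1,\infty))\in\M_\xi(\Pt(X))$, and part~(2) gives that $A\in\F_\alpha(X)$ for an even ordinal $\alpha$ forces $\Pt(A)\in\F_\alpha(\Pt(X))$. These are precisely the two quantitative assertions of the theorem. For the remaining qualitative claim that $\Pt$ preserves Baire subsets, I would invoke the fact that every Baire subset $A$ of $X$ lies in $\M_\xi(X)$ for some countable ordinal $\xi$; the conclusion just obtained then places $\Pt(A)$ in $\M_\xi(\Pt(X))$, which is contained in the Baire $\sigma$-algebra $\mathcal B_0(\Pt(X))$, so $\Pt(A)$ is a Baire subset of $\Pt(X)$.

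I do not expect a genuine obstacle, as the inductive Borel-class computation is entirely encapsulated in Lemma~\ref{l1.19}. The only point requiring care is the bookkeeping in the opening identification: one must keep track that $\Pt(A)$ is being regarded inside $\Pt(X)$ via $\mu^*(A)=1$, and note that $\dot\chi_A$ being $[0,1]$-valued makes the threshold sets $\dot\chi_A^{-1}(\{1\})$ and $\dot\chi_A^{-1}([1,\infty))$ coincide, so that the form of the level set matches the hypotheses of the lemma exactly.
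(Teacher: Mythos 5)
Your proof is correct and is exactly the argument the paper intends: Theorem~\ref{t1.22} is stated without a written proof immediately after Lemma~\ref{l1.19} and Corollary~\ref{c1.21}, being precisely the special case $a=1$ of parts (1) and (2) of that lemma under the identification $\Pt(A)=\{\mu\in\Pt(X)\mid\mu^*(A)=1\}=\dot\chi_A^{-1}([1,\infty))$ fixed after Corollary~\ref{c1.5}. Your closing observation that every Baire set lies in some class $\M_\xi(X)$ for a countable ordinal $\xi$, so that the quantitative statements imply the qualitative preservation of Baire sets, correctly completes the argument.
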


Let $X$ be a metrizable compact space. By $\P(X)$ we denote the family of projective subsets of $X$, i.e. the smallest family containing the class $\mathcal B(X)$ of all Borel subsets of $X$ and satisfying the following conditions:
\begin{itemize}
\item[(1)] For any continuous map $f:A\to X$ of the set
$A\in\P(X)$ the image $f(A)$ belongs to the class $\P(X)$;
\item [(2)] for any set $A\in\P(X)$ its complement $X\bs A$
belongs to the set $\P(X)$.
\end{itemize}
The family $\P(X)$ can be presented as $\P(X)=\bigcup_{n=0}^\infty
\P_n(X)$, where $\P_0(X)=\mathcal B(X)$ is the family of Borel sets  of $X$;
projective sets of the class  $\P_{2n+1}(X)$ are continuous images of sets from the class $\P_{2n}(X)$; projective sets of the class $\P_{2n}(X)$ are complements to projective sets of the class $\P_{2n-1}(X)$. Given this, for any $n\ge 0$
$\P_{2n+1}(X)\subset\P_{2n+3}(X)\cap\P_{2n+4}(X)$ \cite[\S 38]{12}. Projective sets from classes $\P_1(X)$ и $\P_2(X)$ have specific names: they are called, respectively, analytic and coanalytic.

\begin{theorem}\label{t1.23} The functor $\Pt$ preserves projective subsets of metrizable compacta. Moreover, for any metrizable compact space $X$ and any $n\ge 1$, if
$A\in\P_{2n-1}(X)$, then $\Pt(A)\in\P_{2n+2}(P(X))$.
\end{theorem}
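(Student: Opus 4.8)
The plan is to reduce to the compact metric case, where $\Pt=P$, and then to read off the complexity of
$\Pt(A)=\{\mu\in P(X):\mu^*(A)=1\}$ by a single descriptive–set–theoretic quantifier count rather than by an induction on $n$. First I would rewrite $\Pt(A)=\{\mu\in P(X):\mu_*(X\bs A)=0\}$, using that $\mu^*(A)=1-\mu_*(X\bs A)$. Since every $\mu\in P(X)$ is a regular (indeed Radon) measure on the metric compactum $X$, the inner measure is a supremum over compacta, so $\mu\in\Pt(A)$ holds if and only if $\mu(L)=0$ for every compact set $L\subseteq X\bs A$. This is the form I want to analyse, because the quantifier ``for every compact $L$'' ranges over the compact metric hyperspace $K(X)$ of nonempty compact subsets of $X$, keeping us inside the category of metric compacta.

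Next I would realize the continuous image by a graph so that the set $X\bs A$ becomes transparent. By definition $A=g(C)$ for some $C\in\P_{2n-2}(X)$ and a continuous map $g\colon C\to X$. I put $Y=X\times X$, take $f=\pr_2\colon Y\to X$, and let $\Gamma=\{(x,g(x)):x\in C\}$ be the graph of $g$. Since $g$ is continuous, $\Gamma=\cl(\Gamma)\cap(C\times X)$, so $\Gamma\in\P_{2n-2}(Y)$ (for $n=1$ this reads $\Gamma\in\P_0(Y)$), and plainly $f(\Gamma)=A$. The useful consequence is the fibre identity $X\bs A=\{x\in X:f^{-1}(x)\cap\Gamma=\emptyset\}$, whence, for a compact $L\subseteq X$, one has $L\subseteq X\bs A$ if and only if $f^{-1}(L)\cap\Gamma=\emptyset$ (here $f^{-1}(L)=X\times L$).

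With this in hand the bookkeeping proceeds entirely inside the projective hierarchy of the metric compacta $K(X)$ and $P(X)$, using only the two operations from the definition of $\P$: continuous image (projection) and complement. The map $L\mapsto f^{-1}(L)=X\times L$ from $K(X)$ to $K(Y)$ is continuous and the relation $\{(L,z):\pr_2(z)\in L\}$ is closed, so $\{L:f^{-1}(L)\cap\Gamma\neq\emptyset\}$ is the projection onto $K(X)$ of $(K(X)\times\Gamma)$ intersected with a closed set; as $\Gamma\in\P_{2n-2}$, this projection lies in $\P_{2n-1}(K(X))$, and its complement $\{L:f^{-1}(L)\cap\Gamma=\emptyset\}$ lies in $\P_{2n}(K(X))$. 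The function $(\mu,L)\mapsto\mu(L)$ is upper semicontinuous, so $\{(\mu,L):\mu(L)>0\}$ is Borel; intersecting it with $P(X)\times\{L:f^{-1}(L)\cap\Gamma=\emptyset\}$ keeps us in $\P_{2n}$, projecting onto $P(X)$ gives a set in $\P_{2n+1}(P(X))$, and finally $\Pt(A)$, being the complement of this projection, lands in $\P_{2n+2}(P(X))$, as claimed.

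The hard part, and the place where the exact index is decided, is the hyperspace complexity count. I would need to verify carefully that $\Gamma$ really stays in $\P_{2n-2}$, that the membership, inverse–image and positivity relations on $K(X)$ and $P(X)$ are of the stated Borel, respectively projective, type, and that the paper's inductively defined classes $\P_k$ on the metric compacta $K(X)$ and $P(X)$ are closed under the projection and complement steps used above (closure under continuous preimages is what justifies passing from $\Gamma\in\P_{2n-2}(Y)$ to $K(X)\times\Gamma\in\P_{2n-2}$) and agree with the usual projective classes, so that the quantifier manipulations are legitimate. Finally I would check the edge case $n=1$, where $C$ is merely Borel, giving $\Gamma\in\P_0$ and the same terminal bound $\P_4(P(X))$.
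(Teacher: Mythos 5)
Your proposal is correct, and its outer skeleton is the same as the paper's: both express $P(X)\bs\Pt(A)$ as a projection, over the hyperspace of compact subsets of $X$, of the positivity relation $\{(\mu,L):\mu(L)>0\}$ intersected with (a product over) the hyperspace trace $\{L : L\subset X\bs A\}$, and then take one complement to land in $\P_{2n+2}(P(X))$; in particular your appeal to upper semicontinuity of $(\mu,L)\mapsto\mu(L)$ is exactly what the paper proves by hand as the closedness of $R(a)=\{(\mu,K)\in P(X)\times\exp(X):\mu(K)\ge a\}$. The genuine divergence is in the key hyperspace lemma. The paper imports from an external reference (its citation [13]) the fact that $X\bs A\in\P_{2n}(X)$ implies $\exp(X\bs A)\in\P_{2n}(\exp(X))$, whereas you re-derive precisely this instance inline: writing $A=g(C)$ with $C\in\P_{2n-2}(X)$, using the (valid, since $g$ is continuous on $C$) identity $\Gamma=\cl(\Gamma)\cap(C\times X)$ to place the graph in $\P_{2n-2}(X\times X)$, and representing $\{L:L\cap A\ne\emptyset\}$ as the projection of $(K(X)\times\Gamma)$ intersected with the closed membership relation, so that it lies in $\P_{2n-1}$ and its complement $\exp(X\bs A)$ in $\P_{2n}$. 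A second, smaller deviation: the paper decomposes positivity as the countable union $\bigcup_{m=1}^\infty\pr_1\bigl(R(\tfrac1m)\cap(P(X)\times\exp(X\bs A))\bigr)$ and uses closure of $\P_{2n+1}$ under countable unions, while you keep the $F_\sigma$ relation $\{\mu(L)>0\}$ intact and instead use closure of $\P_{2n}$ under intersection with Borel sets --- facts of the same Kuratowski-level depth. What the paper's route buys is brevity (one citation does the hyperspace work) plus an explicit proof of the semicontinuity you only assert; what yours buys is self-containedness and a single transparent quantifier count showing exactly where the index $2n+2$ comes from. The closure and invariance properties you flag at the end (stability of the classes $\P_k$ under continuous preimages, products with compacta, intersections with closed or Borel sets, and their agreement with the classical projective classes on the auxiliary compacta $\exp(X)$ and $P(X)$) are used equally implicitly by the paper, which covers them by its reference to Kuratowski, \S 38, so they are not a gap peculiar to your argument; and your $n=1$ edge case ($\Gamma\in\P_0$, terminal bound $\P_4(P(X))$) agrees with the theorem's bound for analytic $A$.
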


\begin{proof} Let $X$ be a metrizable compact space. By  $\exp(X)$ we denote the hyperspace of non-empty closed subsets of $X$ endowed with the Vietoris topology \cite{9}. Let us note that for any $0\le a\le 1$ the set
$R(a)=\{(\mu,K)\in P(X)\times \exp(X)\mid \mu(K)\ge a\}$ is closed in
$P(X)\times \exp(X)$. Indeed, let $(\mu,K)\in P(X)\times \exp(X)$ be a limit point of the set $R(a)$. By the definition,
$\mu(K)=\inf\{\mu(f)\mid f\in C(X),\; f\ge 0,\; f|K\equiv 1\}$. Let
$f:X\to [0,1]$ be any function with $f|K\equiv 1$. Then for any $\e>0$
the set $$\langle f^{-1}(1-\frac\e2,1]\rangle=\{B\in\exp(X)\mid B\subset
f^{-1}(1-\frac \e2,1]\}$$ is open in $\exp(X)$. Since $(\mu,K)$ is the limit point of the set $R(a)$, there exists a pair $(\eta, C)\in
R(a)$ such that $|\eta(f)-\mu(f)|<\frac \e2$ and $C\subset f^{-1}(1-\frac
\e2,1]$. Then $\mu(f)>\eta(f)-\frac \e2=\int_Xf\, d\,\eta-\frac
\e2\ge\int_Cf\, d\eta-\frac \e2>(1-\frac \e2)\eta(C)-\frac \e2\ge (1-\frac
\e2)a-\frac \e2\ge a-\e$. Since $\e>0$ is arbitrary, $\mu(f)\ge
a$, which implies that $\mu(K)\ge a$ and $(\mu,K)\in R(a)$.

Now let $n\ge 1$ and $A\in\P_{2n-1}(X)$. Then
$$
\begin{aligned} P(X)&\bs\Pt(A)=\{\mu\in
P(X)\mid\mu^*(A)<1\}=\{\mu\in P(X)\mid \mu_*(X\bs A)>0\}=\\
&=\bigcup_{m=1}^\infty \{\mu\in P(X)\mid \mu(K)\ge \tfrac
1m\mbox{ for some compact $K\subset X\bs A$}\}=\bigcup_{m=1}^\infty \pr_1(G_m),
\end{aligned}
$$ where
$$G_m=\{(\mu,K)\in P(X)\times\exp(X)\mid K\subset X{\bs}A,\; \mu(K)\ge
\tfrac 1m\}=R(\tfrac 1m)\cap (P(X)\times\exp(X{\bs}A)),$$ and
$\pr_1:P(X)\times\exp(X)\to P(X)$ is the projection onto the first factor. Since $A\in\P_{2n-1}(X)$, then $X\bs A\in\P_{2n}(X)$, and, according to \cite{13},
$\exp(X\bs A)\in\P_{2n}(\exp(X))$. Consequently,
$P(X)\bs\Pt(A)\in\P_{2n+1}(P(X))$ and
$\Pt(A)\in\P_{2n+2}(P(X))$. The theorem is proved.
\end{proof}

\begin{remark}\label{r1.24} Theorem \ref{t1.22} implies that $\Pt(A)\in\P_0(P(X))$ for any $A\in\P_0(X)$. Moreover, Theorem \ref{t2.32} implies that for any coanalytic subset $A\subset X$ \
$\Pt(A)\in\P_3(P(X))$. This follows from the fact that coanalytic subsets of metric compacta are measurable with respect to any measure and, therefore, for a coanalytic set  $A\subset X$ the equality $\Pt(A)=\HP(A)$ holds.
\end{remark}

Let us now recall the notion of monad, introduced by S. Eilenberg and T. Moore \cite{14}.

\begin{definition}\label{d1.25} A {\em monad} on a category ${\mathcal C}$ is a triple
${\Bbb T}=(T,\delta,\psi)$ consisting of a covariant functor $T:{\mathcal C}\to{\mathcal C}$
and natural transformations $\delta:Id\to T$ (identity) and $\psi:T^2\to T$ (multiplication) which satisfy the following conditions:
$$
\psi\circ T\delta=\psi\circ \delta T=\id_T\quad\text{и}\quad \psi\circ\psi
T=\psi\circ T\psi.
$$
A functor $T$ that can be included in a triple $\Bbb T$ is called {\em monadic} in the category ${\mathcal C}$.
\end{definition}

It is well-known \cite{2} that the functor $P$ is monadic in the category $\Comp$ of compacta. In fact, it can be included in the monad ${\Bbb P}=(P,\delta,\psi)$,
where $\delta$ is the Dirac transformation, and the component $\psi_X:P^2(X)\to
P(X)$ of multiplication $\psi$ is determined by the formula  $\psi_X(M)(f)=M(F_f)$ for
$f\in C(X)$, $M\in P^2(X)$, where $F_f:P(X)\to\IR$ is a continuous function such that $F_f(\mu)=\mu(f)$, $\mu\in P(X)$.

Our aim is to show that the functor $\Pt:\Tych\to\Tych$ can also be included in a monad. For this purpose, apparently, it is sufficient to show that for any Tychonoff space $X$ \ $\delta _{\beta X}(X)\subset \Pt(X)$ and
$\psi_{\beta X}(\Pt^2(X))\subset \Pt(X)$, where $\delta _{\beta X}$ and
$\psi_{\beta X}$ are components of natural transformations included in the triple ${\Bbb P}=(P,\delta ,\psi)$ (here and further in the text the symbol $\Pt^2$ denotes the composition of functors $\Pt\circ\Pt$). The first inclusion
$\delta _{\beta X}(X)\subset \Pt(X)$  follows from Theorem 1.14. In order to prove the second inclusion,  let us assume that $M\in \Pt^2(X)\subset
P^2(\beta X)$, i.e. $M^*(\Pt(X))=1$. Let $\{\varphi_\alpha \}\subset
C(\beta X)$ be a monotonically decreasing net of continuous functions on
$\beta X$ converging to zero on the set $X\subset \beta X$ pointwise.
According to \cite{4}, $\psi_{\beta X}(M)\in\Pt(X)$ provided we show that the real-valued net $\{\psi_{\beta X}(M)(\varphi_\alpha)\}$ converges to zero.
Observe that $\{F_{\varphi_\alpha }:P(\beta X)\to\IR\}$ is a monotonically decreasing net of continuous functions on $P(\beta X)$, converging to zero on the set $\Pt(X)\subset P(\beta X)$ pointwise. Since $M$ is a $\tau$-smooth measure on $\Pt(X)$, by \cite{4} we have that,
$\{M(F_{\varphi_\alpha })\}\to 0$. But $\psi_{\beta X}(M)(\varphi_\alpha)
=M(F_{\varphi_\alpha })$ for every $\alpha $. Therefore, the net
$\{\psi_{\beta X}(M)(\varphi_\alpha )\}$ converges to zero, i.e.
$\psi_{\beta X}(\Pt^2(X))\subset \Pt(X)$. Let us choose $\delta _X=\delta
_{\beta X}|X:X\to\Pt(X)$ and $\psi_X=\psi_{\beta
X}|\Pt^2(X):\Pt^2(X)\to\Pt(X)$. One can easily see that ${\Bbb
P}_\tau=(\Pt,\delta ,\psi)$ is a monad on the category $\Tych$. Thus, we have proved

\begin{theorem}\label{t1.26} The functor $\Pt:Tych\to\Tych$ is a monad on the category $\Tych$ extending the monad $P:\Comp\to\Comp$.
\end{theorem}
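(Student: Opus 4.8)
The plan is to exhibit $\mathbb{P}_\tau=(\Pt,\delta,\psi)$ as a monad by restricting the monad structure $(P,\delta,\psi)$ of the functor $P$ on $\Comp$, which is known to be monadic. Since $\Pt(X)\subset P(\beta X)$ for every Tychonoff space $X$, the components $\delta_{\beta X}$ and $\psi_{\beta X}$ of the compact monad are already available on $\beta X$ and on $P(\beta X)$; the whole task reduces to checking that their appropriate restrictions land inside $\Pt(X)$ and that the monad identities are inherited.

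First I would define the candidate structure maps by restriction, setting $\delta_X=\delta_{\beta X}|X:X\to\Pt(X)$ and $\psi_X=\psi_{\beta X}|\Pt^2(X):\Pt^2(X)\to\Pt(X)$. For these to be well-defined one needs two inclusions. The inclusion $\delta_{\beta X}(X)\subset\Pt(X)$ is immediate from Theorem \ref{t1.14}: for $x\in X$ the Dirac measure $\delta_{\beta X}(x)$ has support $\{x\}\subset X$, so $(\delta_{\beta X}(x))^*(X)=1$.

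The main obstacle is the second inclusion $\psi_{\beta X}(\Pt^2(X))\subset\Pt(X)$. Here I would fix $M\in\Pt^2(X)\subset P^2(\beta X)$, so that $M^*(\Pt(X))=1$, and invoke the functional characterization of $\tau$-smoothness recalled in the introduction: a measure in $P(\beta X)$ lies in $\Pt(X)$ exactly when $\mu(\varphi_\alpha)\to 0$ for every monotonically decreasing net $\{\varphi_\alpha\}\subset C(\beta X)$ converging pointwise to zero on $X$. Given such a net, the functions $F_{\varphi_\alpha}:P(\beta X)\to\IR$ defined by $F_{\varphi_\alpha}(\mu)=\mu(\varphi_\alpha)$ form a monotonically decreasing net of continuous functions on $P(\beta X)$ converging pointwise to zero on $\Pt(X)$, because each $\mu\in\Pt(X)$ is itself $\tau$-smooth. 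Since $M$ is $\tau$-smooth on $\Pt(X)$, this yields $M(F_{\varphi_\alpha})\to 0$; but the defining formula for $\psi$ gives $\psi_{\beta X}(M)(\varphi_\alpha)=M(F_{\varphi_\alpha})$, so $\psi_{\beta X}(M)(\varphi_\alpha)\to 0$ and hence $\psi_{\beta X}(M)\in\Pt(X)$.

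Finally I would verify that $\delta$ and $\psi$ are natural transformations for $\Pt$ and that the monad identities $\psi\circ\Pt\delta=\psi\circ\delta\Pt=\id_{\Pt}$ and $\psi\circ\psi\Pt=\psi\circ\Pt\psi$ hold. All of these are formal consequences of the corresponding facts for the compact monad: since $\Pt$ is a subfunctor of $P\circ\beta$ and the structure maps are restrictions of the already-natural compact maps, every naturality square and every monad-axiom diagram commutes on $P(\beta X)$, and therefore commutes after restriction to the subspaces $X$ and $\Pt^2(X)$. This last step is routine once the two inclusions are established, so the entire content of the theorem lies in the verification that $\psi_{\beta X}(\Pt^2(X))\subset\Pt(X)$.
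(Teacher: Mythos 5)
Your proposal matches the paper's proof essentially step for step: you define $\delta_X$ and $\psi_X$ by restricting the compact monad's structure maps, get $\delta_{\beta X}(X)\subset\Pt(X)$ from Theorem \ref{t1.14}, and establish the key inclusion $\psi_{\beta X}(\Pt^2(X))\subset\Pt(X)$ by exactly the paper's argument — for $M\in\Pt^2(X)$ with $M^*(\Pt(X))=1$ and a monotonically decreasing net $\{\varphi_\alpha\}\subset C(\beta X)$ vanishing pointwise on $X$, the functions $F_{\varphi_\alpha}$ decrease to zero pointwise on $\Pt(X)$, so $\psi_{\beta X}(M)(\varphi_\alpha)=M(F_{\varphi_\alpha})\to 0$ by Varadarajan's characterization of $\tau$-smoothness. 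The paper likewise leaves the naturality and monad identities as routine consequences of the compact case, so your proof is correct and takes the same route.
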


\begin{lemma}\label{l1.27} For any Tychonoff space $X$ and its subset $Y$ the equality $\psi_X^{-1}(\Pt(Y))=\Pt^2(Y)$ holds.
\end{lemma}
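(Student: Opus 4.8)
The plan is to prove the two inclusions $\Pt^2(Y)\subseteq\psi_X^{-1}(\Pt(Y))$ and $\psi_X^{-1}(\Pt(Y))\subseteq\Pt^2(Y)$ separately, throughout treating $\Pt(Y)\subseteq\Pt(X)$ and $\Pt^2(Y)\subseteq\Pt^2(X)$ as subspaces via the embeddings supplied by Theorem~\ref{t1.4}. The inclusion $\Pt^2(Y)\subseteq\psi_X^{-1}(\Pt(Y))$ comes for free from the naturality of the multiplication $\psi$ of the monad $\mathbb P_\tau=(\Pt,\delta,\psi)$ (Theorem~\ref{t1.26}): for the inclusion $i:Y\hookrightarrow X$ the naturality square $\psi_X\circ\Pt^2(i)=\Pt(i)\circ\psi_Y$, together with the fact that $\Pt(i)$ and $\Pt^2(i)=\Pt(\Pt(i))$ are embeddings, shows that $\psi_X$ restricted to $\Pt^2(Y)$ coincides with $\psi_Y$ and hence sends $\Pt^2(Y)$ into $\Pt(Y)$.

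For the reverse inclusion the main tool is the barycentre identity $\psi_{\beta X}(M)(f)=M(F_f)$, $f\in C(\beta X)$, which extends by a monotone-class argument to $\psi_{\beta X}(M)(B)=\int_{P(\beta X)}\mu(B)\,dM(\mu)$ for every Borel $B\subseteq\beta X$, the integrand $\mu\mapsto\mu(B)$ being Borel measurable. Writing $\nu=\psi_X(M)$, I would first settle the case of a Borel set $B\subseteq X$. Choosing a Borel $\tilde B\subseteq\beta X$ with $\tilde B\cap X=B$, Lemma~\ref{l1.1} (together with $\mu_*(\beta X\setminus X)=0$) gives $\mu^*(B)=\mu(\tilde B)$ for every $\mu\in\Pt(X)$, so that $\Pt(B)=\{\mu\in\Pt(X):\mu(\tilde B)=1\}$ is a Borel subset (cf.\ Theorem~\ref{t1.22}) and $\nu^*(B)=\nu(\tilde B)=\int\mu(\tilde B)\,dM$. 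Since $0\le\mu(\tilde B)\le1$, the equality $\nu^*(B)=1$ forces $\mu(\tilde B)=1$ for $M$-almost all $\mu$, i.e.\ $M(\Pt(B))=1$; this yields $\psi_X^{-1}(\Pt(B))=\Pt^2(B)$ whenever $B$ is Borel.

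To pass to an arbitrary $Y$ I would use the representation $\Pt(Y)=\bigcap\{\Pt(B):Y\subseteq B,\ B\text{ Borel in }X\}$, which follows from $\mu^*(Y)=\inf_B\mu(\tilde B)$ via Lemma~\ref{l1.1}. Since preimages commute with intersections, the Borel case gives $\psi_X^{-1}(\Pt(Y))=\bigcap_B\psi_X^{-1}(\Pt(B))=\bigcap_B\Pt^2(B)$, so everything reduces to the identity $\bigcap_B\Pt^2(B)=\Pt^2(Y)$; equivalently, to showing that $M^*\bigl(\bigcap_B\Pt(B)\bigr)=1$ as soon as $M^*(\Pt(B))=1$ for every Borel $B\supseteq Y$. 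This is exactly the step I expect to be the main obstacle: $\{\Pt(B)\}$ is an uncountable, downward-directed family of Borel sets of full $M$-measure, and in general the intersection of such a family need not have full outer measure (a downward-directed family of co-finite, hence full-measure, sets can have empty intersection). What should rescue the situation is that $M$ is a Radon, hence $\tau$-additive, measure on the compactum $P(\beta X)$ and that $M$ is concentrated on the $\tau$-smooth measures $\Pt(X)$ (i.e.\ $M\in\Pt^2(X)$): by inner regularity it suffices to show that every compact $\mathcal K\subseteq\Pt(X)\setminus\Pt(Y)=\bigcup_B\bigl(\Pt(X)\setminus\Pt(B)\bigr)$ is $M$-null, and one then tries to interchange the upward-directed union of the null Borel sets $\Pt(X)\setminus\Pt(B)$ with $M$ using its $\tau$-additivity. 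Reconciling the pointwise conditions ``$\mu(\tilde B)=1$ $M$-a.e.\ for each $B$'' into the single conclusion ``$\mu^*(Y)=1$ $M$-a.e.'', in the face of the possible non-measurability of $\mu\mapsto\mu^*(Y)$, is the delicate technical heart of the argument and the place where the hypotheses on $M$ must be used in an essential way.
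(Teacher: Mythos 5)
Your first inclusion and your Borel case are both correct. The naturality argument for $\Pt^2(Y)\subseteq\psi_X^{-1}(\Pt(Y))$ is exactly the paper's first line, and your barycentre-identity argument, via Lemma~\ref{l1.1} and the observation that $\int\mu(\tilde B)\,dM=1$ with $0\le\mu(\tilde B)\le1$ forces $M(\{\mu:\mu(\tilde B)=1\})=1$, genuinely settles $\psi_X^{-1}(\Pt(B))=\Pt^2(B)$ for Borel $B$. But the step you flag as the ``delicate technical heart'' is a real gap, and it cannot be closed: the inclusion $\psi_X^{-1}(\Pt(Y))\subseteq\Pt^2(Y)$ is false for arbitrary $Y$, by the very mechanism you predicted. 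In ZFC, enumerate the compact subsets of $[0,1]^2$ of positive planar Lebesgue measure in type $\frak c$ and recursively pick one point in each, keeping all first coordinates distinct (possible, since each such compact projects onto a set of cardinality $\frak c$). The resulting set $S$ meets every compact of positive measure, so every compact $K\subseteq[0,1]^2\bs S$ is null and hence $\lambda^*(S)=1$ for the planar Lebesgue measure $\lambda$; yet every vertical fiber of $S$ contains at most one point. Let $\mu_t$ be the uniform measure on the fiber $\{t\}\times[0,1]$ and let $M$ be the image of linear Lebesgue measure under the continuous map $t\mapsto\mu_t$. Then $\psi(M)=\lambda\in\Pt(S)$, but each $\mu_t$ gives mass $\ge\frac12$ to a compact set inside its fiber missing the single point of $S$ there, so $\mu_t^*(S)\le\frac12$; thus the compact set $\mathcal K=\{\mu_t:t\in[0,1]\}$ is disjoint from $\Pt(S)$ while $M(\mathcal K)=1$, whence $M^*(\Pt(S))=0$ and $M\in\psi^{-1}(\Pt(S))\bs\Pt^2(S)$. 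This is precisely your scenario of an uncountable downward-directed family of full-$M$-measure Borel sets $\Pt(B)$, $B\supseteq S$ Borel, whose intersection is $M$-negligible; no regularity hypothesis on $M$ rescues it, since here the ambient space is compact and $M$ is Radon.

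You should also know that the paper's own proof does not contain the missing argument. After deriving $\psi_X(\Pt^2(Y))\subseteq\Pt(Y)$ from monadicity, its ``inverse inclusion'' paragraph takes $M$ with $\psi_{\beta X}(M)\notin\Pt(Y)$, produces a compact $K$ disjoint from $Y$ with $\psi_{\beta X}(M)(K)>0$, and uses the decreasing net $\Phi=\{f\in C(\beta X):0\le f\le1,\ f|K\equiv1\}$ to conclude $M\notin\Pt^2(Y)$. But the implication ``$\psi_{\beta X}(M)\notin\Pt(Y)\Rightarrow M\notin\Pt^2(Y)$'' is the contrapositive of ``$M\in\Pt^2(Y)\Rightarrow\psi_{\beta X}(M)\in\Pt(Y)$'', i.e.\ it re-proves the easy inclusion $\Pt^2(Y)\subseteq\psi_X^{-1}(\Pt(Y))$ by a second route and never establishes the converse that the lemma asserts. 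So your proposal proves everything that is provable at this level of generality, your Borel case identifies exactly where the equality does hold (and where the applications, such as Corollary~\ref{c1.28} for spaces Borel or \v Cech-complete in their compactification, are safe; the case $Y=X$ dense in $\beta X$ needed there requires a separate analysis which neither your argument nor the paper supplies), and the step you honestly declined to prove is a defect of the lemma as stated, not of your argument.
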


\begin{proof} The inclusion $\psi_X(\Pt^2(Y))\subset \Pt(Y)$ follows from the fact that the functor $\Pt$ is a monad. Let us now prove the inverse inclusion. Let
$M\in P^2(\beta X)$ be  a measure such that $\psi_{\beta X}(M)\notin \Pt(Y)$.
Then there exists a compact $K\subset \beta Y\bs Y$ such that $\psi_{\beta
X}(M)(K)>0$. Consider a family of functions
$\Phi=\{f\in C(\beta X)\mid 0\le f\le 1,\; f|K\equiv 1\}$, equipped with a natural partial order $\le$. The family $\Phi$ is downward-directed, i.e. for any functions $f,g\in\Phi$ \ $\min(f,g)\in\Phi$, and, if treated as a net, it converges to zero on the set $Y$ pointwise. Then the net $\{F_f:P(\beta X)\to\IR\}_{f\in \Phi}$
monotonically decreases and converges to zero on the set $\Pt(Y)$. If the measure
$M$ would belong to the set $\Pt^2(Y)$, the net
$\{M(F_f)\}_{f\in\Phi}$ would converge to zero. But $M(F_f)=\psi_{\beta
X}(M)(f)\ge \psi_{\beta X}(M)(K)>0$. This contradiction shows that $M\notin \Pt^2(Y)$. Thus $\psi_X^{-1}(\Pt(Y))=\Pt^2(Y)$.
\end{proof}

\begin{corollary}\label{c1.28} For every Tychonoff space $X$ the component $\psi_X:\Pt^2(X)\to\Pt(X)$ of multiplication is an open and perfect map.
\end{corollary}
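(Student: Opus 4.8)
The plan is to realise $\psi_X$ as the restriction of the multiplication map $\psi_{\beta X}:P^2(\beta X)\to P(\beta X)$ of the monad $\mathbb P$ on compacta to the full preimage of $\Pt(X)$, and then to invoke the stability of openness and of perfectness under such restrictions. So first I would recall two facts about the compact case. On the one hand, $\psi_{\beta X}:P^2(\beta X)\to P(\beta X)$ is a continuous map between compacta, hence automatically perfect: it is closed, being a continuous map of a compact space into a Hausdorff space, and each of its fibres is a closed subset of the compactum $P^2(\beta X)$, hence compact. On the other hand, the multiplication of the probability monad $\mathbb P$ on $\Comp$ is an open map, i.e. $\psi_{\beta X}$ is open \cite[VII.3]{9}. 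Thus $\psi_{\beta X}$ is simultaneously open and perfect.

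The second ingredient is the identity $\psi_{\beta X}^{-1}(\Pt(X))=\Pt^2(X)$, which is precisely Lemma~\ref{l1.27} applied with $Y=X$. Because of it, the map $\psi_X=\psi_{\beta X}|\Pt^2(X):\Pt^2(X)\to\Pt(X)$ is exactly the restriction of $\psi_{\beta X}$ over the subspace $\Pt(X)\subset P(\beta X)$, that is, the map obtained from $\psi_{\beta X}$ by base change along the inclusion $\Pt(X)\hookrightarrow P(\beta X)$. It then remains only to check that both properties pass to this restriction, and here the full-preimage identity is what makes the verifications go through.

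For openness I would argue directly. Let $U$ be open in $\Pt^2(X)$ and write $U=V\cap\Pt^2(X)$ with $V$ open in $P^2(\beta X)$. Using $\Pt^2(X)=\psi_{\beta X}^{-1}(\Pt(X))$ one checks the equality $\psi_X(U)=\psi_{\beta X}(V)\cap\Pt(X)$: the inclusion $\subseteq$ is immediate, while for $\supseteq$ a point $y\in\psi_{\beta X}(V)\cap\Pt(X)$ is of the form $y=\psi_{\beta X}(z)$ with $z\in V$, and $y\in\Pt(X)$ forces $z\in\psi_{\beta X}^{-1}(\Pt(X))=\Pt^2(X)$, so $z\in U$. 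Since $\psi_{\beta X}(V)$ is open in $P(\beta X)$, the set $\psi_X(U)=\psi_{\beta X}(V)\cap\Pt(X)$ is open in $\Pt(X)$, and $\psi_X$ is open. For perfectness, the same identity shows that for every $\mu\in\Pt(X)$ one has $\psi_{\beta X}^{-1}(\mu)\subset\psi_{\beta X}^{-1}(\Pt(X))=\Pt^2(X)$, so the fibre $\psi_X^{-1}(\mu)$ coincides with the compact fibre $\psi_{\beta X}^{-1}(\mu)$; and the restriction of a closed map to the full preimage of a subspace of the target is again closed. Hence $\psi_X$ is perfect.

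The main obstacle is not in this corollary itself but in the two inputs it rests on: the openness of $\psi_{\beta X}$ for the functor $P$ on $\Comp$ (a nontrivial classical fact) and, above all, the full-preimage identity $\Pt^2(X)=\psi_{\beta X}^{-1}(\Pt(X))$ supplied by Lemma~\ref{l1.27}. Once these are in hand, the corollary reduces to the routine restriction-stability of open and perfect maps carried out above.
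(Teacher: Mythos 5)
Your proof is correct and follows essentially the same route as the paper: the paper's proof of Corollary~\ref{c1.28} consists precisely of citing Lemma~\ref{l1.27} (giving $\psi_{\beta X}^{-1}(\Pt(X))=\Pt^2(X)$) together with the openness of $\psi_{\beta X}:P^2(\beta X)\to P(\beta X)$ as a map of compacta, leaving the restriction-stability of openness and perfectness over a full preimage implicit. You have merely written out those routine verifications, which the paper omits.
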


\noindent{\it The proof} follows from Lemma 1.27 and from the fact that
$\psi_{\beta X} :P^2(\beta X)\to P(\beta X)$ is an open mapping of compacta \cite[7.8]{2}, or \cite{15}.

Let us recall that a map $p:X\to Y$ between topological spaces is called
{\it soft} for the class of metric spaces if for any metric space $A$, its closed subset $B\subset A$ and maps $g:A\to Y$ and $f:B\to X$ such that $p\circ g=g|B$ there exists a map $F:A\to X$ such that $F|B=f$ and $p\circ F=g$.

\begin{theorem}\label{t1.29} For every metric space $X$ the map  $\psi_X:\Pt^2(X)\to
\Pt(X)$ is soft for the class of metric spaces.
\end{theorem}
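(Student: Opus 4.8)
The plan is to recast the lifting problem that defines softness as a continuous-selection problem and then to solve it with Michael's selection theorem. Fix once and for all a metric space $A$, a closed subset $B\subset A$, a map $g\colon A\to\Pt(X)$ and a partial lift $f\colon B\to\Pt^2(X)$ with $\psi_X\circ f=g|B$; the task is to produce $F\colon A\to\Pt^2(X)$ with $F|B=f$ and $\psi_X\circ F=g$.

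First I would assemble the structural facts about $\psi_X$ that are already at hand. Since $X$ is metrizable, Theorem~\ref{t1.17} gives that $\Pt(X)$ is metrizable, and applying it once more to the metric space $\Pt(X)$ shows that $\Pt^2(X)=\Pt(\Pt(X))$ is metrizable as well. By Corollary~\ref{c1.28} the map $\psi_X$ is open and perfect; in particular it is a surjection (it is split by the section $\delta_{\Pt(X)}\colon\Pt(X)\to\Pt^2(X)$, since $\psi_X\circ\delta_{\Pt(X)}=\id$ by the monad identities of Theorem~\ref{t1.26}) whose point-preimages are compact. Finally, $\psi_X$ is the restriction of $\psi_{\beta X}$, and the defining formula $\psi_{\beta X}(M)(f)=M(F_f)$ is affine in $M$; hence each fibre $\psi_X^{-1}(\nu)$ is convex, and combined with perfectness every fibre is a nonempty compact convex set.

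Next I would introduce the constraint multifunction $\Phi\colon A\to\Pt^2(X)$ given by $\Phi(a)=\psi_X^{-1}(g(a))$. By the previous paragraph every value $\Phi(a)$ is a nonempty compact convex set, and because $\psi_X$ is open and $g$ continuous, $\Phi$ is lower semicontinuous. The hypothesis $\psi_X\circ f=g|B$ says exactly that $f$ is a continuous selection of $\Phi|B$. Replacing $\Phi$ on $B$ by the singleton-valued map $a\mapsto\{f(a)\}$ produces a multifunction $\Phi'$ which is still lower semicontinuous---continuity of $f$ controls the behaviour at points of $B$, while lower semicontinuity of $\Phi$ controls the behaviour off $B$---and whose continuous selections are precisely the maps extending $f$ and lying over $g$.

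It then remains to invoke Michael's selection theorem. Viewing $\Pt^2(X)$ as a convex subset of the locally convex space of signed Radon measures on $\beta\Pt(X)$ with the weak$^*$ topology, the multifunction $\Phi'$ is lower semicontinuous with nonempty compact convex values over the paracompact (indeed metric) space $A$, so it admits a continuous selection $F\colon A\to\Pt^2(X)$; by construction $F|B=f$ and $\psi_X\circ F=g$, which is the desired lift. The main obstacle is this last step: one must run the selection theorem in the weak$^*$ topology rather than in a norm topology, so I would use the compact-convex-valued form of Michael's theorem, which is valid over an arbitrary locally convex space precisely because compactness of the values replaces completeness of the range. The two technical points requiring care are the derivation of lower semicontinuity of $\Phi$ from the openness of $\psi_X$, and the verification that the weak$^*$ subspace topology on $\Pt^2(X)$ coincides with its intrinsic topology as $\Pt(\Pt(X))$.
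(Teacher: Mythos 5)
Your proposal is correct and takes essentially the same approach as the paper: the paper's proof is a one-line citation of exactly the ingredients you use---Corollary~\ref{c1.28} (openness and perfectness of $\psi_X$), metrizability of $\Pt^2(X)$ via Theorem~\ref{t1.17}, Michael's selection theorems \cite[\S1.4 and Ex.~1.4.2]{16}, and the fact that each fibre $\psi_X^{-1}(\mu)$ is a convex compactum. You merely spell out the standard details (lower semicontinuity of $a\mapsto\psi_X^{-1}(g(a))$, the modification of the multifunction on the closed set $B$, and the compact-convex-valued form of Michael's theorem in a locally convex setting) that the paper delegates to the cited reference.
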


{The proof} follows from Corollary \ref{c1.28}, metrizability of the space
$\Pt^2(X)$ (see Theorem \ref{t1.17}), Michael's selection theorems \cite[\S1.4 and Ex. 1.4.2]{16} and the fact that the preimage
$\psi_X^{-1}(\mu)\subset\Pt^2(X)$ of any measure $\mu\in\Pt(X)$ is a convex compact in $\Pt^2(X)$.
\vskip5pt

We will say that the map $p:E\to B$ is homeomorphic to a trivial $Q$-fibration if there exists a homeomorphism $f:E\to B\times Q$ such that
$\pr_B\circ f=p$, where $\pr_B:B\times Q\to B$ is a natural projection.
Here $Q=[-1,1]^\omega$ stands for the Hilbert cube.

\begin{theorem}\label{t1.30} For a metrizable separable space
$X$ that contains more than one point, the map $\psi_X|\Pt^2(X)\bs
\delta^2(X):\Pt^2(X)\bs \delta^2(X)\to
\Pt(X)\bs\delta(X)$ is homeomorphic to a trivial $Q$-fibration.
\end{theorem}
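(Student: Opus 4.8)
The plan is to reduce the statement to the standard characterization of trivial $Q$-fibrations: a map is homeomorphic to a trivial $Q$-fibration provided it is a perfect, soft surjection of separable metrizable ANR's all of whose fibers are homeomorphic to the Hilbert cube $Q$. The ambient spaces cause no trouble. By Theorems \ref{t1.15} and \ref{t1.17} (applied once, resp.\ twice) both $\Pt(X)$ and $\Pt^2(X)$ are separable and metrizable, and, being convex subsets of locally convex spaces, they are absolute retracts by Dugundji's theorem; since $\delta(X)$ is closed in $\Pt(X)$ by Theorem \ref{t1.14}, the complements $\Pt(X)\bs\delta(X)$ and $\Pt^2(X)\bs\delta^2(X)$ are open in these ANR's and hence are themselves ANR's.

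First I would verify that $\psi_X$ really carries $\Pt^2(X)\bs\delta^2(X)$ onto $\Pt(X)\bs\delta(X)$ and that the restriction inherits the good properties of $\psi_X$. The crucial identity is $\psi_X^{-1}(\delta(X))=\delta^2(X)$: if $\psi_{\beta X}(M)=\delta_x$, then for every $f\in C(\beta X)$ with $f\ge 0$ and $f(x)=0$ one has $\int\nu(f)\,dM(\nu)=M(F_f)=\delta_x(f)=0$, which forces $\nu(f)=0$ for $M$-almost all $\nu$ and hence $M=\delta_{\delta_x}$. Thus $\Pt^2(X)\bs\delta^2(X)$ is precisely the full $\psi_X$-preimage of the open set $\Pt(X)\bs\delta(X)$, so the restriction is a surjection that is open and perfect by Corollary \ref{c1.28} and soft for metric spaces by Theorem \ref{t1.29} (openness, perfectness, and softness all descend to the restriction of a map to the full preimage of an open subspace).

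Next I would identify the fibers over the non-degenerate base. For $\mu\in\Pt(X)\bs\delta(X)$ the fiber $\psi_X^{-1}(\mu)$ is, by Theorem \ref{t1.29}, a convex compact subset of the metrizable space $\Pt^2(X)$; it is moreover infinite-dimensional precisely because $\mu$ is \emph{not} a Dirac measure. Indeed, $\delta_\mu$ and the push-forward $\Pt(\delta_X)(\mu)$ are already two distinct points of this fiber, and taking ever finer convex decompositions of $\mu$ into barycenters produces an infinite-dimensional convex set of such $M$. By Keller's theorem a metrizable infinite-dimensional compact convex set is homeomorphic to $Q$, so every fiber of the restricted map is a Hilbert cube.

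Finally, having a perfect, open, soft surjection of separable metrizable ANR's all of whose fibers are homeomorphic to $Q$, I would invoke the characterization of trivial $Q$-fibrations (in the spirit of the Toru\'nczyk--West bundle theorem combined with \v S\v cepin's theory of soft maps) to conclude that $\psi_X|\colon\Pt^2(X)\bs\delta^2(X)\to\Pt(X)\bs\delta(X)$ is homeomorphic to the projection $(\Pt(X)\bs\delta(X))\times Q\to\Pt(X)\bs\delta(X)$. The main obstacle is exactly this last step: one must make sure the chosen trivialization theorem applies in the merely separable-metrizable (not necessarily completely metrizable) setting, and that it is \emph{softness}, rather than mere local triviality, that upgrades the $Q$-bundle to a global product. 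A secondary delicate point is the genuine infinite-dimensionality of the fibers, which is what lets Keller's theorem deliver $Q$ rather than a finite-dimensional cell, and which is also the reason the Dirac measures must be deleted from both spaces.
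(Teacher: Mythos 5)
There is a genuine gap, and it sits exactly where you placed your ``main obstacle'': the final trivialization step. The characterization you invoke --- a perfect, open, soft surjection of separable metrizable ANR's with all fibers homeomorphic to $Q$ is homeomorphic to a trivial $Q$-fibration --- is not an available theorem in this generality, and it cannot be discharged in the setting of this statement. For a non-compact separable metrizable $X$ the spaces $\Pt(X)$ and $\Pt^2(X)$ are in general neither completely metrizable nor locally compact, whereas the Toru\'nczyk--West bundle theorem and the related $Q$-fibration criteria operate with (locally) compact $Q$-manifolds, and \v S\v cepin-type soft-map theory lives in the category of compacta. Moreover, even where such results yield \emph{local} triviality, upgrading to a \emph{global} product over the non-contractible base $\Pt(X)\bs\delta(X)$ is a further obstruction (triviality of $Q$-bundles over arbitrary ANR bases is bound up with the largely unresolved homotopy structure of the homeomorphism group of $Q$), and softness is not known to remove it outside the complete/compact setting. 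So the proof as proposed does not close; everything before that step (ANR-ness of the ambient spaces, the restriction of openness, perfectness and softness to the full preimage of the open set $\Pt(X)\bs\delta(X)$, and the Keller-theorem analysis of the fibers) is sound but insufficient.

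The paper avoids this machinery entirely by a reduction to the compact case. Fix a metric compactification $cX$ of $X$; by Fedorchuk's theorem \cite{15} the map $\psi|P^2(cX)\bs\delta^2(cX):P^2(cX)\bs\delta^2(cX)\to P(cX)\bs\delta(cX)$ \emph{is} a trivial $Q$-fibration, i.e.\ there is a homeomorphism $h:P^2(cX)\bs\delta^2(cX)\to (P(cX)\bs\delta(cX))\times Q$ commuting with the projections. By Lemma~\ref{l1.27}, $\psi^{-1}(\Pt(X))=\Pt^2(X)$ inside $P^2(cX)$, and since the fiber over each Dirac measure is a singleton --- this is precisely your extreme-point computation $\psi^{-1}(\delta(X))=\delta^2(X)$, which is correct and is the one ingredient of your argument that the reduction also uses --- the set $\Pt^2(X)\bs\delta^2(X)$ is the full $\psi$-preimage of $\Pt(X)\bs\delta(X)$ in $P^2(cX)\bs\delta^2(cX)$. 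Hence $h$ restricts to a homeomorphism $\Pt^2(X)\bs\delta^2(X)\to(\Pt(X)\bs\delta(X))\times Q$ over the base: a trivial fibration restricted to the full preimage of any subset of the base is again trivial. This two-line restriction argument is what replaces the missing characterization theorem, and it is the step your proposal needs to adopt; in particular the infinite-dimensionality of the fibers and Keller's theorem, while true, are then not needed at all.
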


\begin{proof} Let $c\, X$ be a metric compactification of the space $X\ne\{*\}$. In \cite{15} it is proved that the map
$\psi_X|P^2(c\, X)\bs \delta^2(c\, X):P^2(c\, X)\bs
\delta^2(c\, X)\to P(c\, X)\bs \delta(c\, X)$ is a trivial $Q$-fibration. Now the theorem follows from Lemma \ref{l1.27}.
\end{proof}

\section{Categorial properties of the functor $\HP$}

In this section we shall investigate categorial properties of the functor
$\HP:\Tych\to\Tych$ of Radon probability measures.

As it was mentioned in the introduction, there exist two equivalent approaches to defining the space $\HP(X)$, where $X$ is a Tychonoff space. The first is via embedding in compact spaces:
$\HP(X)=\{\mu\in P(\beta X)\mid \mu_*(X)=1\}\subset P(\beta X)$. In the second approach, $\HP(X)$ is defined as the space of all Radon probability measures on $X$. Further in the text, depending on the situation, without any specific caveats we will use either the first or the second approach to the description of the space $\HP(X)$.

\begin{theorem}\label{t2.1} The functor $\HP$ preserves the class of injective maps.
\end{theorem}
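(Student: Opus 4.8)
The plan is to exploit the fact that, unlike general $\tau$-smooth measures, Radon measures are concentrated on compacta, and that an injective continuous map restricted to a compact set behaves like an embedding. Let $f:X\to Y$ be an injective continuous map of Tychonoff spaces, and suppose $\mu,\eta\in\HP(X)$ satisfy $\HP(f)(\mu)=\HP(f)(\eta)$; the goal is to deduce $\mu=\eta$. Since $\HP(f)=P(\beta f)|\HP(X)$, the hypothesis reads $\mu((\beta f)^{-1}(B))=\eta((\beta f)^{-1}(B))$ for every Borel $B\subset\beta Y$.

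First I would reduce the claim to compact subsets of $X$. Since $\mu\in\HP(X)$, for every $\e>0$ there is a compact set $K_0\subset X$ with $\mu(K_0)>1-\e$; and because $\mu$ is a regular measure on $\beta X$, for any Borel $B\subset\beta X$ the value $\mu(B\cap K_0)$ is approximated from within by compact subsets of $B\cap K_0\subset X$. Combining these two facts yields $\mu(B)=\sup\{\mu(C)\mid C\subset B\cap X,\ C\text{ compact}\}$ for every Borel $B\subset\beta X$, and the analogous formula for $\eta$. Hence it suffices to prove that $\mu(C)=\eta(C)$ for every compact set $C\subset X$.

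The heart of the argument is to pass from $C$ to its image using injectivity. Fix a compact $C\subset X$; then $\beta f(C)=f(C)$ is compact in $Y$, and I would consider the full preimage $(\beta f)^{-1}(f(C))$, which is a closed, hence Borel, subset of $\beta X$. Since $f$ is injective, its trace on $X$ is $(\beta f)^{-1}(f(C))\cap X=f^{-1}(f(C))=C=C\cap X$. As $\mu\in\HP(X)\subset\Pt(X)$, Lemma~\ref{l1.1} applies to the two Borel sets $C$ and $(\beta f)^{-1}(f(C))$, which share the same trace on $X$, giving $\mu(C)=\mu((\beta f)^{-1}(f(C)))$; likewise $\eta(C)=\eta((\beta f)^{-1}(f(C)))$. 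Combining this with the hypothesis,
$$
\mu(C)=\mu((\beta f)^{-1}(f(C)))=\HP(f)(\mu)(f(C))=\HP(f)(\eta)(f(C))=\eta((\beta f)^{-1}(f(C)))=\eta(C).
$$
Letting $C$ range over all compact subsets of $X$ and invoking the reduction of the previous step gives $\mu=\eta$, so $\HP(f)$ is injective.

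I expect the main subtlety to lie in the reduction step: one must be certain that Radon measures concentrated on $X$ are genuinely determined by their values on the compact subsets of $X$ (rather than merely on compacta of $\beta X$). This is exactly where membership in $\HP(X)$, and not just in $\Pt(X)$, is essential, and it is what blocks the counterexample from the introduction showing that $\Pt$ fails to preserve injective maps.
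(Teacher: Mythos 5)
Your proof is correct and follows essentially the same route as the paper: both arguments reduce the problem to compact subsets of $X$ via the Radon property and then exploit injectivity through the identity $f^{-1}(f(C))=C$ to compare the two measures on the compact image $f(C)$. The only differences are cosmetic --- the paper argues in the contrapositive (distinct measures differ on some compact $K\subset X$, hence their images differ on $f(K)$), while your explicit appeal to Lemma~\ref{l1.1} to identify $\mu(C)$ with $\mu\bigl((\beta f)^{-1}(f(C))\bigr)$ merely makes precise a step the paper leaves implicit in the notation $\mu_1(f^{-1}(f(K)))$.
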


\begin{proof} Let $f:X\to Y$ be an injective map and
$\mu_1,\mu_2\in \HP(X)$, $\mu_1\ne \mu_2$. Then $\mu_1(A)\ne\mu_2(A)$ for some Borel set $A\subset X$. Since the measures
$\mu_1,\mu_2$ are Radon, there exists a compact $K\subset A$ such that
$\mu_1(K)\ne \mu_2(K)$. Then $f(K)$ is a compact in $Y$ and
$$\HP(f)(\mu_1)(f(K))=\mu_1(f^{-1}(f(K)))=\mu_1(K)\ne
\mu_2(K)=\HP(f)(\mu_2)(f(K)),$$ i.e. $\HP(f)(\mu_1)\ne \HP(f)(\mu_2)$.
The theorem is proved.
\end{proof}

\begin{theorem}\label{t2.2} The functor $\HP$ preserves the class of perfect maps.
\end{theorem}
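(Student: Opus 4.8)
The plan is to reproduce the argument of Theorem~\ref{t1.3} with $\Pt$ replaced by $\HP$: I would show that $\HP(X)$ coincides with the \emph{full} preimage $P(\beta f)^{-1}(\HP(Y))$ of $\HP(Y)$ under the map $P(\beta f)\colon P(\beta X)\to P(\beta Y)$. Since $P(\beta f)$ is a continuous map of compacta it is perfect, and the restriction of a perfect map to the full preimage of a subspace of the base is again perfect (a standard property of perfect maps, see \cite[\S3.7]{6}). Granting the identity $\HP(X)=P(\beta f)^{-1}(\HP(Y))$, the map $\HP(f)=P(\beta f)|\HP(X)\colon\HP(X)\to\HP(Y)$ is exactly this restriction and is therefore perfect.

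The geometric input is again the perfectness of $f$. As in Theorem~\ref{t1.3}, for perfect $f$ we have $\beta f(\beta X\bs X)\subset\beta Y\bs Y$ \cite[3.7.15]{6}, and this is nothing but the equality $(\beta f)^{-1}(Y)=X$: any point of $\beta X$ sent into $Y$ must already lie in $X$, while $f(X)\subset Y$ gives the reverse containment. It is this equality that permits one to pull compacta of $Y$ back inside $X$.

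It then remains to verify $\HP(X)=P(\beta f)^{-1}(\HP(Y))$. The inclusion $\subset$ is precisely the assertion $P(\beta f)(\HP(X))\subset\HP(Y)$ already proved in the Introduction. For the reverse inclusion I would take $\mu\in P(\beta X)$ with $\nu:=P(\beta f)(\mu)\in\HP(Y)$, so that $\nu_*(Y)=1$. Recalling that every measure in $P(\beta X)$ is Radon on the compactum $\beta X$, for each $\e>0$ one can pick a compact $L\subset Y$ with $\nu(L)>1-\e$; being compact in the Hausdorff space $\beta Y$, $L$ is closed, so $(\beta f)^{-1}(L)$ is closed in $\beta X$ and hence compact, and $L\subset Y$ together with the previous paragraph forces $(\beta f)^{-1}(L)\subset(\beta f)^{-1}(Y)=X$. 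Thus $(\beta f)^{-1}(L)$ is a compact subset of $X$ with $\mu\bigl((\beta f)^{-1}(L)\bigr)=\nu(L)>1-\e$, whence $\mu_*(X)>1-\e$; letting $\e\to0$ gives $\mu_*(X)=1$, i.e. $\mu\in\HP(X)$.

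The one delicate point is this reverse inclusion, which rests entirely on the fact that under a perfect map the $(\beta f)$-preimage of a compactum of $Y$ stays inside $X$. This is exactly where perfectness is indispensable: for a merely continuous $f$ the identity $\HP(X)=P(\beta f)^{-1}(\HP(Y))$ need not hold, and the perfectness hypothesis is precisely what rescues it.
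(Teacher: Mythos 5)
Your proposal is correct and is essentially the paper's own argument: the identity $\HP(X)=P(\beta f)^{-1}(\HP(Y))$ you establish is exactly the paper's inclusion $P(\beta f)(P(\beta X)\bs \HP(X))\subset P(\beta Y)\bs \HP(Y)$ (together with the functoriality inclusion from the Introduction), and your key step --- pulling a compact $K\subset Y$ with $\nu(K)>1-\e$ back to the compact set $(\beta f)^{-1}(K)\subset X$ using $\beta f(\beta X\bs X)\subset\beta Y\bs Y$ --- is the same computation, the paper citing \cite[3.7.2]{6} for the compactness of $f^{-1}(K)$ where you derive it from $(\beta f)^{-1}(Y)=X$. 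The paper then concludes perfectness of the restriction from $P(\beta f)$ being a map of compacta, just as you do via the standard fact about restrictions of perfect maps to full preimages.
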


\begin{proof} Let $f:X\to Y$ be a perfect map of Tychonoff spaces.Then the extension $\beta f:\beta X\to \beta Y$ of the map $f$ has the following property: $\beta f(\beta X\bs X)\subset
\beta Y\bs Y$ \cite[3.7.15]{6}.
We will show that $P(\beta f)(P(\beta X)\bs \HP(X))\subset P(\beta Y)\bs
\HP(Y)$. Let $\mu\in P(\beta X)$ and $P(\beta
f)(\mu)\in \HP(Y)$. Then for any $\e>0$ there exists a compact
$K\subset Y\subset \beta Y$ such that $P(\beta f)(\mu)(K)>1-\e$.
Since $f$ is a proper map, $(\beta f)^{-1}(K)=f^{-1}(K)$
is a compact in $X$ (see \cite[3.7.2]{6}). Then $P(\beta f)(\mu)(K)=\mu((\beta
f)^{-1}(K))=\mu(f^{-1}(K))>1-\e$. Therefore, $\mu\in \HP(X)$ and
$P(\beta f)(P(\beta X)\bs \HP(X))\subset P(\beta Y)\bs \HP(Y)$.
Since $P(\beta f):P(\beta X)\to P(\beta Y)$ is a map between compacta, the last inclusion implies that the map
$\HP(f)=P(\beta f)|\HP_\beta(X):\HP(X)\to\HP(Y)$ is perfect. The theorem is proved.
\end{proof}

\begin{corollary}\label{c2.3} The functor $\HP$ preserves the class of closed embeddings.
\end{corollary}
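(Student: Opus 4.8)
The plan is to derive Corollary \ref{c2.3} as an immediate formal consequence of the two theorems that precede it, exactly in parallel with how Corollary \ref{c1.5} was obtained from Theorems \ref{t1.3} and \ref{t1.4}. The key observation is that a closed embedding is precisely a map that is simultaneously an embedding and a perfect map: indeed, a topological embedding $f:X\to Y$ is a closed embedding if and only if its image $f(X)$ is closed in $Y$, which (for an embedding onto its image) is equivalent to $f$ being a closed map; and a closed injective map is automatically perfect, since injectivity forces each fiber $f^{-1}(y)$ to be either empty or a single point, hence compact. Conversely, a perfect injective map is a closed embedding.

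First I would record this characterization: $f:X\to Y$ is a closed embedding if and only if $f$ is both a topological embedding and a perfect map. Then, given a closed embedding $f$, I would invoke Theorem \ref{t2.2} to conclude that $\HP(f)$ is perfect, and I would note that the proof of Theorem \ref{t2.1} in fact shows more than injectivity when $f$ is an embedding — however, the cleanest route is to also establish that $\HP$ preserves embeddings. The subtlety here is that the excerpt supplies, for the functor $\HP$, a theorem on injective maps (Theorem \ref{t2.1}) and a theorem on perfect maps (Theorem \ref{t2.2}), but not an explicit standalone ``$\HP$ preserves embeddings'' statement analogous to Theorem \ref{t1.4}. So the argument must assemble the embedding property from what is available.

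The step I expect to be the main obstacle is precisely verifying that $\HP(f)$ is an embedding, not merely injective, when $f$ is a closed embedding. My plan is to handle this by a perfect-map-plus-injectivity argument: since $f$ is perfect, Theorem \ref{t2.2} gives that $\HP(f):\HP(X)\to\HP(Y)$ is perfect, hence closed; since $f$ is injective, Theorem \ref{t2.1} gives that $\HP(f)$ is injective. A continuous, closed, injective map is a closed embedding. Thus $\HP(f)$ is a closed embedding, which is exactly the assertion. This shows that preservation of closed embeddings follows directly from preservation of perfect maps together with preservation of injectivity, without needing a separate embedding theorem.

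To write this up, I would state: \emph{Let $f:X\to Y$ be a closed embedding of Tychonoff spaces. Then $f$ is injective and perfect. By Theorem \ref{t2.1} the map $\HP(f):\HP(X)\to\HP(Y)$ is injective, and by Theorem \ref{t2.2} it is perfect, hence closed. A closed injective continuous map is a closed embedding, so $\HP(f)$ is a closed embedding.} The only point requiring a word of justification is that a closed embedding is perfect, which follows because its fibers are singletons (hence compact) and it is a closed map; and that the composite of the two preservation properties yields a closed embedding, since closedness plus injectivity plus continuity is the definition of a closed topological embedding.
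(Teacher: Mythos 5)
Your proof is correct and takes essentially the same route the paper intends: Corollary \ref{c2.3} is stated immediately after Theorems \ref{t2.1} and \ref{t2.2} and \emph{before} the embedding theorem (Theorem \ref{t2.4}), so it is meant to follow, exactly as you argue, from the characterization of closed embeddings as injective perfect maps together with preservation of injectivity and of perfectness. Your explicit verifications --- that a closed embedding is injective and perfect, and that a continuous closed injection is a closed embedding --- supply precisely the routine details the paper leaves unwritten.
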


Since $\HP$ is a subfunctor of the functor $P_\tau$, Theorem \ref{t1.4} implies

\begin{theorem}\label{t2.4} The functor $\HP$ preserves the class of topological embeddings.
\end{theorem}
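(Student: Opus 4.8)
The plan is to exploit the fact that $\HP$ is a subfunctor of $\Pt$ and to invoke Theorem~\ref{t1.4}, which already establishes that $\Pt$ preserves embeddings. Let $f\colon X\to Y$ be a topological embedding of Tychonoff spaces. By Theorem~\ref{t1.4} the map $\Pt(f)\colon\Pt(X)\to\Pt(Y)$ is an embedding, that is, a homeomorphism onto its image $\Pt(f)(\Pt(X))$ equipped with the subspace topology inherited from $\Pt(Y)$. I want to deduce from this single fact that $\HP(f)\colon\HP(X)\to\HP(Y)$ is an embedding.

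First I would recall from the definition of the functor $\HP$ (stated at the beginning of this section) that $\HP(X)$ is a \emph{subspace} of $\Pt(X)$, namely the set $\{\mu\in\Pt(X)\mid\mu_*(X)=1\}$ carrying the subspace topology, and that $\HP(f)$ is simply the restriction $\Pt(f)|\HP(X)$. The key elementary observation is that the restriction of an embedding to a subspace of its domain is again an embedding: since $\HP(X)$ carries the topology inherited from $\Pt(X)$, the map $\Pt(f)|\HP(X)\colon\HP(X)\to\Pt(Y)$ is an embedding into $\Pt(Y)$.

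It then remains to replace the codomain $\Pt(Y)$ by $\HP(Y)$. Because $\HP$ is a functor (Theorem~\ref{t0.1.}), the image satisfies $\HP(f)(\HP(X))=\Pt(f)(\HP(X))\subset\HP(Y)$, and $\HP(Y)$ itself carries the subspace topology from $\Pt(Y)$. A map into a subspace is an embedding exactly when its composite into the ambient space is an embedding with image contained in that subspace; applying this with ambient space $\Pt(Y)$ and subspace $\HP(Y)$ yields that $\HP(f)\colon\HP(X)\to\HP(Y)$ is an embedding.

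I do not expect any genuine obstacle beyond the bookkeeping of subspace topologies. The content of the argument is the general categorical principle that a subfunctor $G$ of an embedding-preserving functor $F$ — meaning $G(X)\subset F(X)$ with the subspace topology and $G(f)=F(f)|G(X)$ — again preserves embeddings. The only point requiring care is to confirm that both $\HP(X)$ and $\HP(Y)$ really do inherit their topologies as subspaces of $\Pt(X)$ and $\Pt(Y)$, which is precisely the way $\HP$ was defined, so the proof reduces to the two restriction steps above.
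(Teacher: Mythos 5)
Your argument is correct and is exactly the paper's: Theorem~\ref{t2.4} is stated there as an immediate consequence of Theorem~\ref{t1.4} via the observation that $\HP$ is a subfunctor of $\Pt$, which is precisely the restriction argument you spell out. Your write-up merely makes explicit the routine subspace-topology bookkeeping that the paper leaves implicit.
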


Thus, the functor $\HP$ preserves the class of injective maps and the class of (closed) embeddings. But in the case of surjective maps it is not so straightforward.

We say that a map $f:X\to Y$ {\it has the property of Borel selection} if there exists a map $s:Y\to X$ (not necessarily continuous) such that
$f\circ s=\id_Y$, and for any open set $U\subset X$ \
$s^{-1}(U)$ is a Borel subset of the space $Y$.

The map $f:X\to Y$ {\it has local Borel selections}, if for every open set $U\subset X$ there exists a Borel selection $s:Y\to X$ of the map $f$ such that $s(f(U))\subset U$.

\begin{example}\label{e2.5} Let $p:{\frak c}\to [0,1]$ be a bijective map of a discrete space $\frak c$ onto an interval. Then the map
$\HP(p):\HP({\frak c})\to \HP([0,1])$ is not surjective (the Lebesque measure on [0,1] does not have a preimage).
\end{example}

At the same time, the following is also true:

\begin{proposition}\label{p2.6} Let $f:X\to Y$ be a map between separable metric spaces that has a Borel selection. Then the map $\HP(f):\HP(X)\to \HP(Y)$ is surjective.
\end{proposition}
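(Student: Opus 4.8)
The plan is to push the given Radon measure on $Y$ back to $X$ along the Borel selection $s$. Fix $\nu\in\HP(Y)$, regarded as a Radon probability measure on $Y$. Since $s^{-1}(U)$ is Borel for every open $U\subset X$, the collection of sets $B\subset X$ with $s^{-1}(B)$ Borel is a $\sigma$-algebra containing all open sets, hence contains every Borel subset of $X$; thus the formula $\mu(B)=\nu(s^{-1}(B))$ defines a Borel probability measure $\mu$ on $X$ (indeed $\mu(X)=\nu(s^{-1}(X))=\nu(Y)=1$). For the identity $\HP(f)(\mu)=\nu$ there is nothing to do beyond unwinding the definitions: for every Borel $A\subset Y$ we have $\HP(f)(\mu)(A)=\mu(f^{-1}(A))=\nu(s^{-1}(f^{-1}(A)))=\nu((f\circ s)^{-1}(A))=\nu(A)$, because $f\circ s=\id_Y$. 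Hence it only remains to check that $\mu$ actually belongs to $\HP(X)$, i.e.\ that $\mu$ is Radon.

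To this end I would use the criterion recalled in the introduction: a regular measure is Radon precisely when $\mu(X)=\sup\{\mu(K)\mid K\text{ is a compact subset of }X\}$, together with the standard fact that every finite Borel measure on a metric space is regular. So it suffices to produce, for each $\e>0$, a compact $K\subset X$ with $\mu(K)>1-\e$. First, since $\nu$ is Radon there is a compact $L\subset Y$ with $\nu(L)>1-\e/2$. The restriction $s|L$ is a Borel measurable map from the compact metric space $L$ into the separable, hence second countable, space $X$, and $\nu$ restricted to $L$ is a finite Borel, hence tight, measure. By Lusin's theorem there is a compact set $L'\subset L$ with $\nu(L\bs L')<\e/2$ on which $s$ is continuous. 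Then $K=s(L')$ is compact in $X$, and since $L'\subset s^{-1}(K)$ we obtain $\mu(K)=\nu(s^{-1}(K))\ge\nu(L')>1-\e$. This establishes tightness of $\mu$ and therefore $\mu\in\HP(X)$, completing the argument.

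The one genuine obstacle is that the selection $s$ is only Borel, not continuous, so $s(L)$ need not be compact and the naive push-forward of a compact witness for the tightness of $\nu$ fails. Lusin's theorem is exactly what repairs this, trading the compact set $L$ for a slightly smaller compact set $L'$ on which $s$ is continuous, so that $s(L')$ is compact and still captures almost all of the mass. The metric hypotheses are used precisely here: separability of $X$ (second countability) to invoke Lusin, and separability of $Y$ to guarantee that $\nu$ is tight so that the compact set $L$ exists.
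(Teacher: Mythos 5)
Your proof is correct and is essentially the paper's own argument: your push-forward $\mu(B)=\nu(s^{-1}(B))$ is literally the same measure as the paper's $\eta(A)=\nu(f(A\cap s(Y)))$, since $f\circ s=\id_Y$ gives $f(A\cap s(Y))=s^{-1}(A)$, and both verifications of Radonness combine tightness of $\nu$ with Lusin's theorem to produce a compact set on which $s$ is continuous and whose image carries almost all the mass. The only cosmetic difference is the order of the two steps (you first extract the compact $L$ by tightness and then apply Lusin inside it, while the paper applies Lusin first to get a closed set $C$ and then shrinks to a compact $K\subset C$), which changes nothing.
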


\begin{proof} Let $s:Y\to X$ be a Borel selection of the map
$f$. For every measure $\mu\in\HP(Y)$ let us choose $\eta$, a countably additive probability measure on $X$ defined by the condition $\eta(A)=\mu(f(A\cap
s(Y)))$ for every Borel set $A\subset X$.

Let us show that $\eta$ is a Radon measure. It is sufficient to show that for any $\e>0$ there exists a compact $K\subset X$ such that $\eta(K)>1-\e$.
Let us fix $\e>0$. Since the map $s:Y\to X$ is Borel measurable, the Lusin theorem \cite[2.3.5]{7} implies that there exists a closed subset $C\subset Y$ such that $\mu(C)>1-\e/2$ and the map
$s|C:C\to X$ is continuous. Since the measure $\mu$ on $Y$ is Radon, there exists such a compact $K\subset C$ that $\mu(C\bs K)<\frac\e2$. Then
$s(K)\subset X$ is compact. Furthermore, $\eta(s(K))=\mu(f(s(K)\cap
s(Y))=\mu(f(s(K)))=\mu(K)>1-\e$. Thus the measure $\eta$ on $X$ is Radon and $\HP(f)(\eta)=\mu$. The theorem is proved.
\end{proof}

\begin{corollary}\label{c2.7} Let $f:X\to Y$ be a bijective continuous mapping of a separable Borel space $X$ onto a metric space $Y$. Then the map $\HP(f):\HP(X)\to \HP(Y)$ is bijective.
\end{corollary}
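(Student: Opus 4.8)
The plan is to prove bijectivity by establishing injectivity and surjectivity separately, using the two results already at hand. Injectivity is immediate: since $f$ is bijective it is in particular injective, so Theorem~\ref{t2.1} gives at once that $\HP(f):\HP(X)\to\HP(Y)$ is injective. All the work therefore goes into surjectivity, and here the natural tool is Proposition~\ref{p2.6}: it suffices to check that $X$ and $Y$ are separable metric spaces and that $f$ admits a Borel selection. The space $X$ is separable metric by hypothesis (a separable Borel space), and $Y$ is metric by hypothesis and separable because it is the continuous image of the separable space $X$. So the whole proof reduces to producing a Borel selection of $f$.

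Since $f$ is a continuous bijection, the only candidate for a selection is the set-theoretic inverse $s=f^{-1}:Y\to X$, which trivially satisfies $f\circ s=\id_Y$. To see that $s$ is a \emph{Borel} selection I must verify that $s^{-1}(U)$ is a Borel subset of $Y$ for every open $U\subset X$. Because $f$ is a bijection one computes $s^{-1}(U)=\{y\in Y\mid f^{-1}(y)\in U\}=f(U)$, so the required fact is precisely that $f$ carries open (hence Borel) subsets of $X$ to Borel subsets of $Y$.

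The hard part is exactly this last statement, and it is where the descriptive-set-theoretic hypotheses enter. I would argue as follows. Realize $X$ as a Borel subset of a Polish space $\tilde X$ (possible since $X$ is a separable absolute Borel space) and embed $Y$ into its completion $\bar Y$, which is Polish because $Y$ is separable. Then $f$ becomes a continuous injection of the Borel set $X\subset\tilde X$ into the Polish space $\bar Y$, and the classical Lusin--Souslin theorem (injective continuous images of Borel sets are Borel; see, e.g., \cite{7}) guarantees that $f(B)$ is Borel in $\bar Y$ for every Borel $B\subset X$. Applying this to $B=U$ open, and noting that $f(U)\subset Y$ is Borel in $\bar Y$ if and only if it is Borel in the subspace $Y$, I obtain that $f(U)$ is Borel in $Y$, as needed.

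With the Borel selection $s=f^{-1}$ in hand, Proposition~\ref{p2.6} yields surjectivity of $\HP(f)$; combined with the injectivity from Theorem~\ref{t2.1}, this proves that $\HP(f)$ is bijective. I expect the only genuine obstacle to be the invocation of the Lusin--Souslin theorem: one must be slightly careful that the hypothesis ``separable Borel space'' delivers an honest absolute-Borel realization of $X$, so that the theorem applies to $f$ on \emph{all} Borel subsets of $X$ rather than merely to $f$ viewed as a map between the ambient Polish spaces.
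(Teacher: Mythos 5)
Your proof is correct and follows essentially the same route as the paper: injectivity from Theorem~\ref{t2.1}, and surjectivity from Proposition~\ref{p2.6} applied to the selection $s=f^{-1}$, whose Borel measurability the paper simply cites from Kuratowski \cite[\S 39, IV]{12} --- precisely the Lusin--Souslin fact you spell out in detail.
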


\begin{proof}The injectivity of the map $\HP(f)$ follows from Theorem \ref{t2.1}. The surjectivity of $\HP(f)$ follows from Proposition \ref{p2.6}, since the map $f^{-1}:Y\to X$ is Borel measurable \cite[\S 39, IV]{12}.
\end{proof}

The condition that a Borel selection should exist is crucial here.
Indeed, let us consider

\begin{example}\label{e2.8} Let $Z\subset [0,1]$ be a subset of the interval with inner Lebesque measure $\lambda_*(Z)=0$ and outer measure $\lambda^*(Z)=1$.
Let $X=Z\times\{0\}\cup([0,1]\bs Z)\times \{1\}$ и $f:X\to [0,1]$ be a projection onto the first factor. Then the Lebesque measure $\lambda$ on $[0,1]$ does not have a preimage under the map $\HP(f):\HP(X)\to P([0,1])$.
\end{example}

At the same time, the functor $\HP$ preserves a feature of maps which implies surjectivity in the compact case.

\begin{proposition}\label{p2.9} Let $f:X\to Y$ be a map such that the image $f(X)$ is dense in $Y$. Then the image $\HP(f)(\HP(X))$ is dense in $\HP(Y)$.
\end{proposition}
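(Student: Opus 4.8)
The plan is to imitate the proof of Proposition~\ref{p1.6}, exhibiting a dense subset of $\HP(Y)$ that already lies in the image $\HP(f)(\HP(X))$. The natural candidate is again the set $P_\omega(f(X))$ of finitely supported probability measures whose support is contained in $f(X)$. The crucial observation that was not needed in the $\Pt$-case is that every finitely supported measure is Radon: a finite subset of $Y$ is compact, so a measure of the form $\sum_{i=1}^n r_i\delta(y_i)$ with $y_i\in Y$ realizes its full mass on the compact set $\{y_1,\dots,y_n\}$, whence $\mu_*(Y)=1$ and $\mu\in\HP(Y)$. Consequently $P_\omega(f(X))\subset \HP(Y)$.

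First I would check the inclusion $P_\omega(f(X))\subset \HP(f)(\HP(X))$. Given $\mu=\sum_{i=1}^n r_i\delta(y_i)\in P_\omega(f(X))$, I choose for each $i$ a point $x_i\in X$ with $f(x_i)=y_i$ and set $\nu=\sum_{i=1}^n r_i\delta(x_i)$. Then $\nu$ is finitely supported, hence $\nu\in\HP(X)$ by the observation above, and $\HP(f)(\nu)=\mu$ by the defining formula for the induced map. Thus every measure in $P_\omega(f(X))$ is hit by $\HP(f)$.

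Next I would verify that $P_\omega(f(X))$ is dense in $\HP(Y)$. Since $f(X)$ is dense in $Y$ and $Y$ is dense in $\beta Y$, the set $f(X)$ is dense in the compactum $\beta Y$; as already used in the proof of Proposition~\ref{p1.6}, the finitely supported measures concentrated on a dense subset of a compactum form a dense subset of the whole space of probability measures, so $P_\omega(f(X))$ is dense in $P(\beta Y)$. Because $P_\omega(f(X))\subset \HP(Y)\subset P(\beta Y)$, the closure of $P_\omega(f(X))$ in the subspace $\HP(Y)$ equals $\HP(Y)\cap\operatorname{cl}_{P(\beta Y)}P_\omega(f(X))=\HP(Y)\cap P(\beta Y)=\HP(Y)$, so $P_\omega(f(X))$ is dense in $\HP(Y)$. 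Combining the two steps, $\HP(f)(\HP(X))\supset P_\omega(f(X))$ is dense in $\HP(Y)$.

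There is no serious obstacle in this argument; the only point requiring care, and the one distinguishing it from the $\Pt$-version, is the verification that finitely supported measures are Radon, so that $P_\omega(f(X))$ actually lands inside $\HP$ rather than merely inside $\Pt$. Everything else is the transfer-of-density observation for subspaces of the compactum $P(\beta Y)$.
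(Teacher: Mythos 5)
Your proof is correct and follows essentially the same route as the paper, which simply says the argument is analogous to Proposition~\ref{p1.6}: the finitely supported measures $P_\omega(f(X))$ lie in $\HP(f)(\HP(X))$ and are dense in $P(\beta Y)$, hence dense in the subspace $\HP(Y)$. Your extra verification that finitely supported measures are Radon (and thus land in $\HP$ rather than merely $\Pt$) is exactly the point the paper leaves implicit, so nothing is missing.
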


\noindent{\it The proof\/} is similar to the proof of Proposition 1.6.

\begin{proposition}\label{p2.10} Let $f:X\to Y$ be an open map between separable metric spaces that has local Borel selections. Then the map $\HP(f):\HP(X)\to \HP(Y)$ is surjective and open.
\end{proposition}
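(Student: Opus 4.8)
The statement has two halves, and I would separate them at once. Surjectivity is essentially free from what precedes: applying the local-Borel-selection hypothesis with $U=X$ yields a global Borel selection $s\colon Y\to X$ of $f$ (and the existence of any selection already forces $f$ to be onto). Since $X$ and $Y$ are separable metric, Proposition~\ref{p2.6} then gives that $\HP(f)\colon\HP(X)\to\HP(Y)$ is surjective. For openness, the first step is to observe that $\HP(X)$ and $\HP(Y)$ are separable metrizable, being subspaces of $\Pt(X)$ and $\Pt(Y)$, which are metrizable by Theorem~\ref{t1.17}. Consequently it suffices to verify the standard sequential criterion for openness of a continuous map into a metrizable space: for every $\mu\in\HP(X)$, writing $\nu=\HP(f)(\mu)$, and every sequence $\nu_n\to\nu$ in $\HP(Y)$, I must produce $\mu_n\in\HP(X)$ with $\HP(f)(\mu_n)=\nu_n$ and $\mu_n\to\mu$.

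The heart of the proof is an explicit lifting construction. Fix finitely many $\varphi_1,\dots,\varphi_k\in C_b(X)$ and $\e>0$; by a diagonal argument over $\e\to0$ (using metrizability) it is enough to build \emph{exact} lifts $\mu_n$ of $\nu_n$ with $\limsup_n\max_j|\mu_n(\varphi_j)-\mu(\varphi_j)|=O(\e)$. Choose a compact $K\subset X$ with $\mu(K)>1-\e$ (possible since $\mu$ is Radon), cover $K$ by open sets $U_1,\dots,U_m$ on each of which every $\varphi_j$ oscillates by less than $\e$, and refine to a Borel partition $B_1,\dots,B_m$ of $K$ with $B_i\subset U_i$; set $B_0=X\bs K$. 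For $i\ge1$ take a local Borel selection $s_i\colon Y\to X$ with $s_i(f(U_i))\subset U_i$, and let $s_0$ be a global Borel selection. Put $\nu_i=\HP(f)(\mu|_{B_i})$; since $\{B_i\}_{i\ge0}$ partitions $X$, one has $\nu_i\le\nu$ and $\sum_{i\ge0}\nu_i=\nu$, so the densities $g_i=d\nu_i/d\nu$ are bounded Borel functions with $\sum_{i\ge0}g_i=1$ $\nu$-a.e., each vanishing $\nu$-a.e. off $f(U_i)$. I then define $\mu_n=\sum_{i\ge0}(s_i)_*(g_i\cdot\nu_n)$, where $(s_i)_*\rho(A)=\rho(s_i^{-1}(A))$. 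Each summand is the image of a Radon measure under a Borel map, hence Radon (via Lusin's theorem), so $\mu_n\in\HP(X)$; and because $f\circ s_i=\id_Y$, we get $\HP(f)(\mu_n)=\sum_i g_i\nu_n=\nu_n$, so these are exact lifts.

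It remains to estimate $\mu_n(\varphi_j)=\int_Y h\,d\nu_n$ against $\mu(\varphi_j)$, where $h(y)=\sum_{i\ge0}\varphi_j(s_i(y))\,g_i(y)$. The $\nu$-side is controlled by the choice of cover: $\int_Y h\,d\nu=\sum_i\int_{B_i}\varphi_j(s_i(f(x)))\,d\mu(x)$, and since $x$ and $s_i(f(x))$ both lie in $U_i$ for $x\in B_i$, the oscillation bound gives $|\int_Y h\,d\nu-\mu(\varphi_j)|\le\e(1+2\|\varphi_j\|)$. The remaining task, and the \emph{main obstacle}, is to show $\int_Y h\,d\nu_n\to\int_Y h\,d\nu$: here $h$ is only a bounded Borel function (the selections and densities are genuinely discontinuous), while $\nu_n\to\nu$ is mere weak convergence, so convergence of integrals is not automatic. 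The plan is to invoke Lusin's theorem to obtain a compact $L\subset Y$ with $\nu(Y\bs L)<\e$ on which all $s_i$ and $g_i$, hence $h$, are continuous, extend $h|_L$ to $\tilde h\in C_b(Y)$ with $\|\tilde h\|\le\max_j\|\varphi_j\|$, and use $\int\tilde h\,d\nu_n\to\int\tilde h\,d\nu$.

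The genuinely delicate point is controlling the error off $L$, i.e. bounding $\limsup_n\nu_n(Y\bs L)$: weak convergence alone does not help, since $Y\bs L$ is open. I would supply this via uniform tightness of the convergent sequence $\{\nu_n\}$ — obtained from Prokhorov's theorem after embedding $Y$ in a metric compactification, where relative compactness of a convergent sequence yields tightness — together with the Portmanteau theorem applied to a $\nu$-continuity set enclosing $Y\bs L$. Granting this control, $\limsup_n|\int_Y h\,d\nu_n-\int_Y h\,d\nu|=O(\e)$, whence $\limsup_n\max_j|\mu_n(\varphi_j)-\mu(\varphi_j)|=O(\e)$. The diagonalization then yields a single sequence of exact lifts converging to $\mu$, verifying the sequential openness criterion and completing the proof that $\HP(f)$ is open.
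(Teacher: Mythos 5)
Your surjectivity half is fine and matches the paper (a local Borel selection with $U=X$ is a global one, and Proposition~\ref{p2.6} applies), and your lifting formula $\mu_n=\sum_i(s_i)_*(g_i\cdot\nu_n)$ is close in spirit to the lift the paper actually constructs. But the step you yourself flag as the ``genuinely delicate point'' is a genuine gap, and as designed it cannot be closed. Prokhorov's converse --- relative compactness implies uniform tightness --- is valid for \emph{Polish} $Y$, while the hypothesis here is only that $Y$ is separable metric. Embedding $Y$ into a metric compactification $cY$ gives compact sets in $cY$, not in $Y$, so it produces no tightness inside $Y$; and uniform tightness genuinely fails in this generality: by a theorem of Preiss, already on the space ${\Bbb Q}$ of rationals there is a weakly convergent sequence of (automatically Radon) probability measures that is not uniformly tight. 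Hence no choice of the Lusin compactum $L$ with $\nu(Y\bs L)<\e$ yields $\limsup_n\nu_n(Y\bs L)=O(\e)$. The Portmanteau fallback fails as well: if $B$ is a $\nu$-continuity set with $B\supset Y\bs L$, then $\nu(B)=\nu(\bar B)\ge\nu(\overline{Y\bs L})$, and whenever $L$ has empty interior --- which is forced as soon as the discontinuity set of $h$ is dense, the typical situation for selections and Radon--Nikodym densities --- one has $\overline{Y\bs L}=Y$, so every admissible $B$ has full measure. The root difficulty is that weak convergence gives only one-sided information (lower bounds on open sets, upper bounds on closed sets), whereas your scheme needs an upper bound for $\nu_n$ on a set specified only up to a $\nu$-null set; since $\nu_n\not\ll\nu$ in general, the integral of a fixed Borel $h$ against $\nu_n$ is simply not controlled. (A smaller flaw of the same origin: exactness $\HP(f)(\mu_n)=\nu_n$ requires $\sum_i g_i=1$ and $g_i=0$ off $f(U_i)$ \emph{pointwise everywhere}, not merely $\nu$-a.e., again because $\nu_n\not\ll\nu$; this one is repairable since $f(U_i)$ is open.)

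The paper's proof is engineered precisely around this one-sidedness, which is why it succeeds where your estimate breaks down. It works with the base of one-sided neighborhoods $\N^*(\mu_0,U_1,\dots,U_n,\e)=\{\mu\in\HP(X)\mid \mu(U_i)-\mu_0(U_i)>-\e\}$ determined by \emph{open} sets, disjointifies the $U_i$ into open sets $V_A$ with $\mu_0(\bar V_A\bs V_A)$ small, forms the open sets $W'_A=\bigcap_{i\in A}f(V_i)$ (this is where openness of $f$ enters), disjointifies again to open $W_A$, and lifts \emph{every} measure $\eta$ in the neighborhood $\N^*(\eta_0,\{W_A\},\delta)$ by pushing $\eta$ restricted to the open sets $W_A$ through local selections satisfying $s_{A,i}(W_A)\subset V_i$, with fixed convex weights $\alpha^A_i$ calibrated to $\mu_0$. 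Because the selections are subordinate to open sets on which $\eta$ is bounded below by $\eta_0$ minus $\delta$ --- irrespective of any null sets --- all estimates are lower bounds of measures of open sets, exactly the quantities the weak topology controls, and no tightness of any family of measures is ever invoked. To salvage your sequential argument you would have to replace the densities $g_i$, which are Borel data defined only to $\nu$-null-set precision, by weights attached to open subsets of $Y$ in this manner; doing so essentially reproduces the paper's construction.
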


\begin{proof} By Proposition \ref{p2.6}, the map $\HP(f)$ is surjective. Let us now show that it is open. According to \cite[II, \S1]{4}, the system of sets
$\N^*(\mu_0,U_1,\dots,U_n,\e)=\{\mu\in\HP(X)\mid
\mu(U_i)-\mu_0(U_i)>-\e,\; 1\le i\le n\}$, where $\e>0$, $\mu_0\in\HP(X)$ and
$U_1,\dots,U_n$ are open sets in $X$, forms a base for the topology on $\HP(X)$. Let us fix a base set $\N^*(\mu_0,U_1,\dots,U_n,\e)$ and show that its image $\HP(f)(\N^*(\mu_0,U_1,\dots,U_n,\e))$ is a neighborhood of the measure $\eta_0=\HP(f)(\mu_0)\in\HP(Y)$. To achieve this, we will first find a base neighborhood $\N^*(\mu_0,V_1,\dots,V_m,\e')\subset
\N^*(\mu_0,U_1,\dots,U_n,\e)$ such that $V_i$, $1\le i\le m$, are pairwise disjoint open subsets of $X$.

By $\In$ we will denote the $n$-element set $\{1,\dots,n\}$. We will equip the set $\exp(\In)$ of all non-empty subsets of $\In$ with a linear order such that for any $A,B\subset\In$, if $A\supset B$, then
$A\le B$ (see \cite[\S2.4, Theorem 4]{17}). Let us note that $|\exp(\In)|<2^n$.
Fix $\e'=\e/2^{n+1}$. For every $A\subset \In$ we choose $U_A=\bigcap_{i\in A}U_i$.
By induction, for every $A\subset \In$ find an open set
$V_A\subset X$ such that $\bar V_A\subset U_A\bs \bigcup_{B<A}\bar V_B$ and
$\mu_0(V_A)>\mu_0(U_A\bs \bigcup_{B<A}\bar V_B)-\e'$. One can easily see that for any $A\subset \In$ \ $\mu_0(\bar V_A\bs V_A)<\e'$ and, therefore,
$\mu_0(U_A)<\mu_0(V_A)+\sum_{B<A}\mu(\bar V_B)+\e'<\sum_{B<
A}\mu_0(V_B)+2^n\e'$. Also, it is obvious that  $\bar
V_A\cap \bar V_B=\emptyset$ for any $A\ne B$. We claim that
$\N^*(\mu_0,\{V_A:A\subset \In\},\e')\subset\N^*(\mu_0,U_1,\dots,U_n,\e)$.
Indeed, if $\mu\in\N^*(\mu_0,\{V_A:A\subset\In\},\e')$, then
$\mu(U_i)=\sum_{A\ni i}\mu(\bar V_A)+\mu(U_i\bs \bigcup_{A\ni i}\bar V_A)\ge
\sum_{A\ni i}\mu(\bar V_A)>\sum_{A\ni i}(\mu_0(V_A)-\e')>\sum_{A\ni
i}\mu_0(V_A)-2^n\e'>\mu_0(U_i)-2^{n+1}\e'=\mu_0(U_i)-\e$, $1\le i\le n$.
That is, $\mu\in\N^*(\mu_0,U_1,\dots,U_n,\e)$.

Let us present the set $\N^*(\mu_0,\{V_A:A\subset\In\},\e')$ as
$\N^*(\mu_0,V_1,\dots,V_m,\e')$, where $m=|\exp(\In)|$. By ${\mathbf m}$ we will denote the $m$-element set $\{1,\dots,m\}$. Let us equip the set
$\exp({\mathbf m})$ with a linear order such that for any $A,B\subset{\mathbf m}$,
if $A\supset B$, then $A\le B$. For every $A\subset{\mathbf m}$ fix
$W_A'=\bigcap_{i\in A}f(V_i)$. Since $f$ is an open map, the sets $W_A'\subset Y$ are open. Fix $\delta=\e'/2^{m+1}$.
By induction, for every $A\subset{\mathbf m}$ find an open set
$W_A\subset Y$ such that $\bar W_A\subset W'_A\bs \bigcup_{B<A}\bar W_B$ and
$\eta_0(W_A)>\eta_0(W'_A\bs \bigcup_{B<A}\bar W_B)-\delta$. One can easily observe that for any distinct $A,B\subset{\mathbf m}$ the sets $\bar W_A$ и $\bar W_B$
are disjoint. Furthermore, for any $A\subset {\mathbf m}$,
$\eta_0((W_A'\bs\bigcup_{B<A}W'_B)\bs W_A)<\delta$. We claim that
$\N^*(\eta_0,\{W_A:A\subset{\mathbf m}\},\delta)\subset \HP(f)
\N^*(\mu_0,V_1,\dots,V_m,\e'))$.
Indeed, let $\eta\in \N^*(\eta_0,\{W_A:A\subset{\mathbf m}\},\delta)$.
For every $A\subset{\mathbf m}$ and every $i\in A$, fix a Borel selection $s_{A,i}:Y\to X$ of the map $f$ such that $s_{A,i}(W_A)\subset V_i$.
Let $\alpha^A_i$, $i\in A$ be non-negative numbers such  that for any
$A\subset {\mathbf m}$ we get $\sum_{i\in A}\alpha_i^A=1$ and $\alpha^A_i\eta_0(W_A)\ge
\mu_0(f^{-1}(W_A)\cap V_i))$. Fix an arbitrary Borel selection
$s_0:Y\to X$ of the map $f$. Let $\mu$ be a measure on $X$ such that for any Borel set $C\subset X$
$$
\mu(C)=\eta(f(s_0(Y\bs \bigcup_{A\subset{\mathbf m}}W_A))\cap C)+\sum_{A\subset
{\mathbf m}}\sum_{i\in A}\alpha^A_i\eta(f(s_{A,i}(W_A)\cap C)).
$$
Similarly to the proof  of Proposition~\ref{p2.6}, it can be shown that $\mu$ is a Radon probability on $X$, i.e. $\mu\in\HP(X)$, and
$\HP(f)(\mu)=\eta$. We will show that $\mu\in \N^*(\mu_0,V_1,\dots,V_m,\e')$.
Indeed, $\mu(V_i)\ge \sum_{A\ni i}\alpha^A_i\eta(f(s_{A,i}(W_A)\cap
V_i))=\sum_{A\ni i}\alpha^A_i\eta(W_A)>\sum_{A\ni
i}\alpha^A_i(\eta_0(W_A)-\delta)>\sum_{A\ni
i}\alpha^A_i\eta_0(W_A)-2^m\delta\ge\sum_{A\ni i}\mu_0(f^{-1}(W_A)\cap
V_i)-2^m\delta=\mu_0(f^{-1}(\bigcup_{A\ni i}W_A)\cap
V_i)-2^m\delta=\mu_0(f^{-1}(\bigcup_{A\ni i}W'_A)\cap
V_i)-\mu_0(f^{-1}(\bigcup_{A\ni i}W'_A\bs\bigcup_{A\ni
i}W_A))-2^m\delta=\mu_0(V_i)-2^m\delta-\eta_0(\bigcup_{A\ni i}W'_A\bs
\bigcup_{A\ni i}W_A)$. What remains to be done is to assess the value $\eta_0(\bigcup_{A\ni
i}W'_A\bs \bigcup_{A\ni i}W_A)$. By the definition of sets $W_A'$,
$\bigcup_{A\ni i}W'_A=W'_i$. Then $\eta_0(\bigcup_{A\ni
i}W'_A\bs \bigcup_{A\ni i}W_A)=\eta_0(W'_i\bs  \bigcup_{A\ni
i}W_A)=\eta_0(\bigcup_{A\ni i}(W'_A\bs\bigcup_{B<A}W'_B)\bs \bigcup_{A\ni
i}W_A)\le \sum_{A\ni i}\eta_0((W'_A\bs \bigcup_{B<A}W'_B)\bs
W_A)<2^m\delta$. Finally, we get that
$\mu(V_i)>\mu_0(V_i)-2^{m+1}\delta=\mu_0(V_i)-\e'$.
\end{proof}

\begin{proposition}\label{p2.11} Let $f:X\to Y$ be a map between Tychonoff spaces, If $\HP(f):\HP(X)\to \HP(Y)$ is an open map, then the map $f$ is also open.
\end{proposition}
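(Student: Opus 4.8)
The plan is to prove openness of $f$ pointwise: it suffices to show that for every open set $U\subset X$ and every point $x_0\in U$, the image $f(U)$ is a neighborhood of $f(x_0)$ in $Y$. The bridge between $f$ and $\HP(f)$ is the Dirac transformation $\delta=\{\delta_X\}$, whose naturality (Theorem~\ref{t1.14}, restricted to $\HP$ since Dirac measures are Radon) gives $\HP(f)\circ\delta_X=\delta_Y\circ f$; recall moreover that $\delta_Y:Y\to\HP(Y)$ is a closed embedding, in particular continuous. The idea is to encode the neighborhood $U$ of $x_0$ by a single test function, push the corresponding basic open set forward by the assumed-open map $\HP(f)$, and pull the result back to $Y$ along $\delta_Y$.

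Concretely, since $X$ is Tychonoff and $x_0\in U$, I would first choose a continuous function $\varphi:X\to[0,1]$ with $\varphi(x_0)=1$ and $\varphi|_{X\bs U}\equiv 0$. The set $W=\{\mu\in\HP(X)\mid \mu(\varphi)>\tfrac12\}$ is then open in $\HP(X)$ (the functional $\mu\mapsto\mu(\varphi)$ is continuous for the topology on $\HP(X)$) and contains $\delta_X(x_0)$, because $\delta_X(x_0)(\varphi)=\varphi(x_0)=1$. By hypothesis $\HP(f)$ is open, so $\HP(f)(W)$ is open in $\HP(Y)$, and by naturality it contains $\delta_Y(f(x_0))=\HP(f)(\delta_X(x_0))$. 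Setting $V=\delta_Y^{-1}(\HP(f)(W))$, continuity of $\delta_Y$ makes $V$ an open subset of $Y$ that contains $f(x_0)$.

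It then remains to verify the inclusion $V\subset f(U)$; this is the step carrying the real content, so I expect it to be the main obstacle. Take $y\in V$, so that $\delta_Y(y)=\HP(f)(\mu)$ for some $\mu\in W$. The crucial observation is that the preimage of a Dirac measure is concentrated on a single fibre: from $\HP(f)(\mu)(\{y\})=\mu(f^{-1}(y))$ and $\delta_Y(y)(\{y\})=1$ we obtain $\mu(f^{-1}(y))=1$. Since also $\mu(\varphi)>\tfrac12>0$ and $\varphi\ge 0$, the integral $\mu(\varphi)=\int_{f^{-1}(y)}\varphi\,d\mu$ cannot vanish, so $\varphi$ must be strictly positive at some point $x\in f^{-1}(y)$; as $\varphi|_{X\bs U}\equiv 0$, this forces $x\in U$, whence $y=f(x)\in f(U)$. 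Thus $V\subset f(U)$, so $f(U)$ is a neighborhood of each of its points and hence open; since $U$ was an arbitrary open set, $f$ is open. The only points demanding care are the measure-theoretic bookkeeping (passing cleanly between the internal description of $\HP$ as Radon measures on $X$, where $\HP(f)(\mu)(A)=\mu(f^{-1}(A))$, and the embedded description in $P(\beta X)$) and the continuity of $\mu\mapsto\mu(\varphi)$, both of which are routine.
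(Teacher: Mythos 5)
Your proof is correct and is, in substance, the same Dirac-measure argument that the paper invokes: the paper proves Proposition~\ref{p2.11} only by reference to Proposition 4.1 of Ditor--Eifler \cite{18}, which likewise pushes an open neighborhood of $\delta_X(x_0)$ through the open map $\HP(f)$, pulls it back along the embedding $\delta_Y$, and uses that any measure mapped to $\delta_Y(y)$ is concentrated on the fibre $f^{-1}(y)$, so that positive mass on $U$ forces $f^{-1}(y)\cap U\ne\emptyset$. Your only cosmetic deviation is to carve out the neighborhood of $\delta_X(x_0)$ with a Urysohn function $\varphi$ rather than with the standard basic open sets $\{\mu\mid\mu(U)>1-\e\}$ of the weak topology, which changes nothing essential.
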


\noindent{\it The proof\/} literally repeats the proof of Proposition 4.1
\cite{18}.

\begin{remark}\label{r2.12} Under the assumption of the continuum hypothesis ( $\aleph_1=\frak
c$ ), the condition of separability in Propositions \ref{p2.6}, \ref{p2.10} can be omitted. This follows from \cite[\S31, X,8]{12} and the fact that the support of any Radon measure is separable.
\end{remark}

\begin{question}\label{q2.13}\footnote{${}^3$ This question was answered affirmatively in \cite{BK}.}
 Let $f:X\to Y$ be an open surjective map between separable Borel spaces. Will the map $\HP(f)$ be open?
\end{question}

In \cite[\S 3]{8} this question will be answered in the affirmative in the case when $X$  is metrizable by a complete metric.

Let $A$ be a subset of a Tychonoff space $X$. Since the functor
$\HP$ preserves embeddings, we will threat the space $\HP(A)$ as a subset of the space $\HP(X)$.

\begin{theorem}\label{t2.14} The functor $\HP$ preserves preimages, i.e. for any map $f:X\to Y$ between Tychonoff spaces and any subset
$A\subset Y$ we get $\HP(f)^{-1}(\HP(A))=\HP(f^{-1}(A))$.
\end{theorem}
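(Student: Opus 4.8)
The plan is to establish the single sharp identity $\HP(f)(\mu)_*(A)=\mu_*(f^{-1}(A))$ for every $\mu\in\HP(X)$, where the inner measures are computed in $\beta Y$ and $\beta X$ respectively; this at once yields both inclusions, since membership in $\HP(f^{-1}(A))$ (resp.\ $\HP(A)$), viewed inside $\HP(X)$ (resp.\ $\HP(Y)$), means precisely that the relevant inner measure equals $1$. Throughout I would use that any $\mu\in\HP(X)\subset P(\beta X)$ is a Radon measure on $\beta X$ with $\mu_*(X)=1$, and that $\nu:=\HP(f)(\mu)=P(\beta f)(\mu)\in\HP(Y)$ (Theorem~\ref{t0.1.}) satisfies $\nu(B')=\mu((\beta f)^{-1}(B'))$ for every Borel $B'\subset\beta Y$. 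By inner regularity both inner measures are computed as suprema over compacta: $\mu_*(f^{-1}(A))=\sup\{\mu(K):K\subset f^{-1}(A)\text{ compact}\}$ and $\nu_*(A)=\sup\{\nu(L):L\subset A\text{ compact in }Y\}$.

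For the inequality $\nu_*(A)\ge\mu_*(f^{-1}(A))$ (which is the trivial inclusion $\HP(f^{-1}(A))\subset\HP(f)^{-1}(\HP(A))$, mirroring the pushforward step in Theorem~\ref{t1.7}) I would take any compact $K\subset f^{-1}(A)$ and note that $f(K)$ is a compact subset of $A$ with $K\subset(\beta f)^{-1}(f(K))$, whence $\nu(f(K))=\mu((\beta f)^{-1}(f(K)))\ge\mu(K)$. Taking the supremum over $K$ gives the claim.

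The substance of the proof is the reverse inequality $\nu_*(A)\le\mu_*(f^{-1}(A))$. Fix a compact $L\subset A$ in $Y$ and set $B=(\beta f)^{-1}(L)$, a compact (hence Borel) subset of $\beta X$ with $\nu(L)=\mu(B)$ and $B\cap X=f^{-1}(L)\subset f^{-1}(A)$. The difficulty is that $B$ may contain points of $\beta X\bs X$, so $B$ itself need not lie in $f^{-1}(A)$. To remove this excess mass I would invoke $\mu_*(X)=1$: given $\e>0$, choose a compact $K_0\subset X$ with $\mu(K_0)>1-\e$. Then $B\cap K_0$ is compact, is contained in $X$ and in $B\cap X\subset f^{-1}(A)$, and $\mu(B\cap K_0)\ge\mu(B)-\mu(\beta X\bs K_0)>\nu(L)-\e$. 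Hence $\mu_*(f^{-1}(A))\ge\nu(L)-\e$; letting $\e\to0$ and passing to the supremum over $L$ gives $\mu_*(f^{-1}(A))\ge\nu_*(A)$.

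Combining the two inequalities yields $\nu_*(A)=\mu_*(f^{-1}(A))$, so $\HP(f)(\mu)\in\HP(A)$ if and only if $\mu\in\HP(f^{-1}(A))$, which is the desired equality. The only delicate point is the trimming step in the third paragraph: intersecting the compact preimage $(\beta f)^{-1}(L)$ with a near-full compact $K_0\subset X$ is exactly what forces a compact set inside $f^{-1}(A)$, and it is here that the Radon (rather than merely $\tau$-smooth) character of the measures in $\HP(X)$, encoded in $\mu_*(X)=1$, is essential; this is the inner-measure counterpart of the outer-measure argument used for $\Pt$ in Theorem~\ref{t1.7}.
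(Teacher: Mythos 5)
Your proof is correct and follows essentially the same approach as the paper's: there too the key step is to pull back a compact $K\subset A$ with $\HP(f)(\mu)(K)>1-\e/2$ and use the Radon property of $\mu$ to trim the (closed) preimage down to a compact $C\subset f^{-1}(K)\subset f^{-1}(A)$ with $\mu(C)>1-\e$. Your variations are cosmetic: you perform the trimming in $\beta X$ by intersecting $(\beta f)^{-1}(L)$ with a near-full compact $K_0\subset X$ witnessing $\mu_*(X)=1$, instead of invoking inner regularity of the induced measure on $X$, and you package the conclusion as the sharp identity $\HP(f)(\mu)_*(A)=\mu_*(f^{-1}(A))$, which the paper's two-inclusion argument yields implicitly.
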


\begin{proof} The inclusion $\HP(f^{-1}(A))\subset
\HP(f)^{-1}(\HP(A))$ is simple. We will show that $\HP(f)^{-1}(\HP(A))\subset
\HP(f^{-1}(A))$. Let $\mu\in\HP(X)$ be a measure satisfying the condition
$\HP(f)(\mu)\in\HP(A)$. Let us fix $\e>0$. Since
$\HP(f)(\mu)\in\HP(A)$, there exists a compact $K\subset A$ such that
$\HP(f)(\mu)(K)>1-\frac\e2$. The set $f^{-1}(K)\subset X $ is closed, with $\mu(f^{-1}(K))=\HP(f)(\mu)(K)>1-\frac\e2$. Since the measure $\mu$ is Radon,  there exists a compact $C\subset f^{-1}(K)$ such that
$\mu(f^{-1}(K)\bs C)<\frac\e2$. Therefore, $C\subset f^{-1}(A)$ and
$\mu(C)>1-\e$, т.е. $\mu\in\HP(f^{-1}(A))$. The theorem is proved.
\end{proof}

\begin{theorem}\label{t2.15} The functor $\HP$ preserves countable intersections, i.e. for any Tychonoff space $X$ and its subsets $X_n\subset X$,
$n\in\IN$, $\HP(\bigcap_{n\in\IN}X_n)=\bigcap_{n\in\IN}\HP(X_n)$.
\end{theorem}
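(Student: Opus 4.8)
The plan is to work inside the Stone--\v Cech compactification $\beta X$, using the identifications made after Corollary~\ref{c1.5}: for a subset $A\subset X$, a measure $\mu\in\HP(X)\subset P(\beta X)$ lies in $\HP(A)$ precisely when $\mu_*(A)=1$. Since the measures in $P(\beta X)$ are regular on the compactum $\beta X$, inner regularity lets me replace the Borel inner measure by a compact one, so the working criterion becomes the inner-regularity characterization of Radon measures already employed in the proof of Theorem~\ref{t2.14}: $\mu\in\HP(A)$ if and only if for every $\e>0$ there is a compact set $K\subset A$ with $\mu(K)>1-\e$. Writing $X_\infty=\bigcap_{n\in\IN}X_n$, this is the form of the statement I would verify.

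The inclusion $\HP(X_\infty)\subset\bigcap_{n\in\IN}\HP(X_n)$ is immediate from monotonicity of the inner measure: if $\mu_*(X_\infty)=1$, then $\mu_*(X_n)\ge\mu_*(X_\infty)=1$ for every $n$, because $X_\infty\subset X_n$. So here there is nothing to do.

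For the reverse inclusion I would fix $\mu\in\bigcap_{n\in\IN}\HP(X_n)$ and $\e>0$. For each $n$ the membership $\mu\in\HP(X_n)$ yields a compact set $K_n\subset X_n$ with $\mu(K_n)>1-\e/2^n$, hence $\mu(\beta X\bs K_n)<\e/2^n$. Put $K=\bigcap_{n\in\IN}K_n$; being a closed subset of the compactum $\beta X$ it is compact, and $K\subset\bigcap_{n\in\IN}X_n=X_\infty$. The decisive point is that the countable additivity of $\mu$ controls the mass of the intersection through its complement: by countable subadditivity,
$$
\mu(\beta X\bs K)=\mu\Bigl(\bigcup_{n\in\IN}(\beta X\bs K_n)\Bigr)\le\sum_{n\in\IN}\mu(\beta X\bs K_n)<\sum_{n\in\IN}\frac{\e}{2^{n}}=\e,
$$
so that $\mu(K)>1-\e$. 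As $K\subset X_\infty$ is compact and $\e>0$ was arbitrary, $\mu_*(X_\infty)=1$, i.e. $\mu\in\HP(X_\infty)$.

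I expect no serious obstacle: the argument rests entirely on two facts special to the Radon setting, namely that a countable intersection of compacta is again compact and that countable subadditivity bounds the mass lost in passing to the intersection. This also clarifies why the statement is confined to countable families, since the telescoping estimate $\sum\e/2^n$ has no uncountable analogue. This is fully consistent with the contrasting behaviour of the larger functor $\Pt$, which by Remark~\ref{r1.8} fails to preserve even finite intersections and preserves arbitrary \emph{closed} intersections only through the quite different support argument of Theorem~\ref{t1.10}.
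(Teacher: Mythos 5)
Your proof is correct and follows essentially the same route as the paper: choose compacta $K_n\subset X_n$ with $\mu(K_n)>1-\e/2^n$, intersect them, and control the mass of $K=\bigcap_n K_n$. The only difference is that you spell out, via countable subadditivity of $\mu$ on the complements, the estimate $\mu(K)>1-\e$ that the paper leaves as ``easily checked.''
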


\begin{proof} The inclusion $\HP(\bigcap_{n\in\IN}X_n)\subset
\bigcap_{n\in\IN}\HP(X_n)$ is obvious. Now let
$\mu\in\bigcap_{n\in\IN}\HP(X_n)$. We will show that
$\mu\in\HP(\bigcap_{n\in\IN}X_n)$. Fix $\e>0$. Since $\mu\in
\HP(X_n)$, $n\in\IN$, for every $n\in\IN$ there exists a compact
$K_n\subset X_n$ such that $\mu(K_n)>1-\e/{2^n}$. Let
$K=\bigcap_{n\in\IN}K_n$. One can easily check that $K\subset
\bigcap_{n\in\IN}X_n$ и $\mu(K)>1-\e$, i.e. $\mu\in
\HP(\bigcap_{n\in\IN}X_n)$. The theorem is proved.
\end{proof}

\begin{remark}\label{r2.16} Theorem \ref{t2.15} does not hold for an arbitrary number of indices. Indeed, let $X=[0,1]$ and $X_\alpha=[0,1]\bs\{\alpha\}$, where $\alpha\in[0,1]$.
Then for every $\alpha\in [0,1]$ the Lebesque measure $\lambda$ belongs to the set
$\HP(X_\alpha)$. But $\bigcap_{\alpha\in[0,1]}X_\alpha=\emptyset$. That is
$\HP(\bigcap_{\alpha\in[0,1]}X_\alpha)\ne \bigcap_{\alpha\in[0,1]}\HP(X_\alpha)$.
\end{remark}

\begin{lemma}\label{l2.17} Let $X$ be a Tychonoff space and $B\subset
X$ be a Borel subset of $X$. Then $\HP(B)=\Pt(B)\cap\HP(X)\subset
P(\beta X)$.
\end{lemma}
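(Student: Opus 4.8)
The plan is to work entirely inside $P(\beta X)$, using the inner/outer measure characterizations already established in the introduction and after Corollary~\ref{c1.5}. Since $\HP$ preserves embeddings (Theorem~\ref{t2.4}) and the construction does not depend on the chosen compactification, I would realize the three relevant sets as
$\HP(B)=\{\mu\in P(\beta X)\mid\mu_*(B)=1\}$, $\Pt(B)=\{\mu\in P(\beta X)\mid\mu^*(B)=1\}$ and $\HP(X)=\{\mu\in P(\beta X)\mid\mu_*(X)=1\}$, where $\mu_*$ and $\mu^*$ are the inner and outer measure in $\beta X$. Throughout I would use that every $\mu\in P(\beta X)$ is a regular measure on the compact space $\beta X$, hence inner regular by compacta, so that $\mu_*(S)=\sup\{\mu(K)\mid K\subset S\text{ compact}\}$ for every $S\subset\beta X$.

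The inclusion $\HP(B)\subset\Pt(B)\cap\HP(X)$ is immediate from monotonicity of the inner measure: if $\mu_*(B)=1$, then $\mu_*(X)\ge\mu_*(B)=1$ because $B\subset X$, so $\mu\in\HP(X)$, while $\mu^*(B)\ge\mu_*(B)=1$ gives $\mu\in\Pt(B)$.

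The substantive direction is $\Pt(B)\cap\HP(X)\subset\HP(B)$, and this is exactly where the hypothesis that $B$ is Borel in $X$ enters. The key step is to lift $B$ to a Borel set of $\beta X$: since $B\in\mathcal B(X)$ and $\mathcal B(X)$ coincides with the trace of $\mathcal B(\beta X)$ on the subspace $X$, there is a Borel set $\tilde B\subset\beta X$ with $\tilde B\cap X=B$; note that $B\subset\tilde B$. For $\mu\in\Pt(B)\cap\HP(X)$ the condition $\mu^*(B)=1$, applied to the Borel superset $\tilde B\supset B$, forces $\mu(\tilde B)\ge\mu^*(B)=1$, i.e. $\mu(\tilde B)=1$. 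Now fix $\e>0$; since $\mu_*(X)=1$ there is a compact $C\subset X$ with $\mu(C)>1-\e$, and then $\tilde B\cap C\subset\tilde B\cap X=B$ is a Borel subset of $\beta X$ with $\mu(\tilde B\cap C)\ge\mu(\tilde B)+\mu(C)-1>1-\e$. Inner regularity of $\mu$ then yields a compact $K\subset\tilde B\cap C\subset B$ with $\mu(K)>1-\e$; as $\e$ is arbitrary, $\mu_*(B)=1$, that is $\mu\in\HP(B)$.

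The main obstacle, and the only place Borelness of $B$ is used, is producing the Borel extension $\tilde B\subset\beta X$ and upgrading the outer-measure condition $\mu^*(B)=1$ to the exact equality $\mu(\tilde B)=1$; once a full-measure Borel set containing $B$ is in hand, intersecting it with a compact subset of $X$ (furnished by $\mu\in\HP(X)$) and invoking inner regularity is routine. It is worth noting that this is precisely the point that collapses for non-Borel $B$: taking $X=[0,1]$ and a set $B\subset[0,1]$ with $\lambda_*(B)=0$ and $\lambda^*(B)=1$ (as in Remark~\ref{r1.8}), the Lebesgue measure $\lambda$ lies in $\Pt(B)\cap\HP(X)$ yet has $\lambda_*(B)=0$, so it does not belong to $\HP(B)$; thus the Borelness assumption cannot be dropped.
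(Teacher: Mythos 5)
Your proof is correct and takes essentially the same route as the paper's: both lift $B$ to a Borel set $\tilde B\subset\beta X$ with $\tilde B\cap X=B$, deduce $\mu(\tilde B)=1$ from $\mu^*(B)=1$, use $\mu_*(X)=1$ to obtain a compact subset of $X$ of measure close to $1$, intersect it with $\tilde B$ inside $B$, and invoke the regularity of $\mu$ on $\beta X$ to produce a compact $K\subset B$ with $\mu(K)>1-\e$. The only cosmetic difference is bookkeeping --- the paper first extracts a compact $K_1\subset\tilde B$ and intersects two compacta, while you intersect $\tilde B$ with the compact $C$ and apply regularity once --- and your closing counterexample showing that Borelness of $B$ cannot be dropped is a correct (if optional) addition.
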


\begin{proof} The inclusion $\HP(B)\subset \Pt(B)\cap \HP(X)$
is obvious. Let $\mu\in\Pt(B)\cap\HP(X)$. Then $\mu^*(B)=1$ and
$\mu_*(X)=1$. Choose a Borel subset $\tilde B\subset \beta X$ such that $\tilde B\cap X=B$. Then $\mu(\tilde B)\ge
\mu^*(B)=1$. Since the measure $\mu$ is regular, for any $\e>0$
there exists a compact $K_1\subset \tilde B$ such that $\mu(\tilde B\bs
K_1)<\e/2$. By definition, $\mu_*(X)=1$ implies that there exists a compact subset $K_2\subset X\subset \beta X$ such that $\mu(K_2)>1-\e/2$. Then
$K=K_1\cap K_2\subset \tilde B\cap B=B$ is a compact subset of $B$ satisfying
$\mu(\beta X\bs K)\le \mu(\beta X\bs K_1)+\mu(\beta X\bs
K_2)<\e/2+\e/2=\e$, which, by the arbitrariness of $\e>0$ implies that $\mu_*(B)=1$
and $\mu\in\HP(B)$. The lemma is proved.
\end{proof}

\begin{theorem}\label{t2.18} The functor $\HP$ preserves intersections of closed subsets, i.e. for any Tychonoff space $X$ and its closed subsets $X_\alpha$, $\alpha\in A$, \ $\HP(\bigcap_{\alpha\in A}X_\alpha)=\bigcap_{\alpha\in
A}\HP(X_\alpha)$.
\end{theorem}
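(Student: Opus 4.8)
The functor $\HP$ preserves intersections of closed subsets.

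The plan is to mirror the structure of Theorem \ref{t1.10} for the functor $\Pt$, while exploiting the combination of two facts that are now available: that $\Pt$ itself preserves closed intersections (Theorem \ref{t1.10}), and that the Radon measures sit inside the $\tau$-smooth ones as exactly those $\mu$ with $\mu_*(X)=1$. The inclusion $\HP(\bigcap_{\alpha\in A}X_\alpha)\subset\bigcap_{\alpha\in A}\HP(X_\alpha)$ is immediate from functoriality of $\HP$ on embeddings, so the entire content lies in the reverse inclusion. Throughout I work inside $P(\beta X)$, treating $\HP(X_\alpha)=\{\mu\in P(\beta X)\mid \mu^*(X_\alpha)=1,\ \mu_*(X)=1\}$ and using that each $X_\alpha$ is closed in $X$, so that $\bar X_\alpha\cap X=X_\alpha$ where $\bar X_\alpha$ is the closure in $\beta X$.

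First I would fix $\mu\in\bigcap_{\alpha\in A}\HP(X_\alpha)$. Since $\HP(X_\alpha)\subset\Pt(X_\alpha)$ for every $\alpha$, we have $\mu\in\bigcap_{\alpha\in A}\Pt(X_\alpha)$, and Theorem \ref{t1.10} gives $\mu\in\Pt(\bigcap_{\alpha\in A}X_\alpha)$, i.e. $\mu^*(\bigcap_\alpha X_\alpha)=1$. Thus it only remains to upgrade $\tau$-smoothness to Radonness, that is, to show $\mu_*(\bigcap_\alpha X_\alpha)=1$, which amounts to producing, for each $\e>0$, a compact subset $K\subset\bigcap_\alpha X_\alpha$ with $\mu(K)>1-\e$. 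From $\mu\in\HP(X)$ (note each $\HP(X_\alpha)\subset\HP(X)$) I get compactness of support from below on $X$, while the proof of Theorem \ref{t1.10} already identifies $\operatorname{supp}(\mu)\subset\bigcap_{\alpha\in A}\bar X_\alpha$ and shows $\mu(\bigcap_\alpha\bar X_\alpha)=1$.

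The key step is then to combine these two: let $Z=\bigcap_{\alpha\in A}\bar X_\alpha$, a closed subset of $\beta X$ with $\mu(Z)=1$ and, by the closedness of each $X_\alpha$ in $X$, with $Z\cap X=\bigcap_\alpha X_\alpha$. Since $\mu\in\HP(X)$, for $\e>0$ choose a compact $K_0\subset X$ with $\mu(K_0)>1-\e$, and set $K=K_0\cap Z$. Then $K$ is compact, $\mu(K)\ge\mu(K_0)+\mu(Z)-1>1-\e$, and, crucially, $K\subset K_0\cap Z\subset X\cap Z=\bigcap_\alpha X_\alpha$. Letting $\e\to0$ yields $\mu_*(\bigcap_\alpha X_\alpha)=1$, so $\mu\in\HP(\bigcap_\alpha X_\alpha)$.

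The main obstacle to watch is verifying that the support inclusion $\operatorname{supp}(\mu)\subset\bigcap_\alpha\bar X_\alpha$ and the mass identity $\mu(Z)=1$ transfer verbatim from the $\Pt$-setting; these were established in the proof of Theorem \ref{t1.10} using only that $\mu(\bar X_\alpha)=1$ for each $\alpha$, which holds here since $\HP(X_\alpha)\subset\Pt(X_\alpha)$. Given that, the upgrade is genuinely routine: intersecting a compact set witnessing $\mu_*(X)=1$ with the closed full-measure set $Z$ lands inside $\bigcap_\alpha X_\alpha$ precisely because $Z\cap X=\bigcap_\alpha X_\alpha$. One could alternatively invoke Lemma \ref{l2.17} once $\bigcap_\alpha X_\alpha$ is seen to equal $Z\cap X$ with $Z$ closed, writing $\HP(\bigcap_\alpha X_\alpha)=\Pt(\bigcap_\alpha X_\alpha)\cap\HP(X)$ for the (closed, hence Borel) trace, but the direct compact-intersection argument is cleaner and avoids any Borel-regularity bookkeeping.
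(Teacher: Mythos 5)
Your proposal is correct and follows essentially the same route as the paper: reduce to Theorem \ref{t1.10} for the $\tau$-smooth part, then upgrade to Radonness using $\mu_*(X)=1$. The only difference is cosmetic --- the paper simply quotes Lemma \ref{l2.17} twice (for $\bigcap_{\alpha\in A}X_\alpha$ and for each $X_\alpha$), whereas you inline the relevant instance of that lemma's proof by intersecting a compact $K_0\subset X$ of measure $>1-\e$ with the closed full-measure set $Z=\bigcap_{\alpha\in A}\bar X_\alpha$, which is a valid specialization (no regularity step is even needed since $Z$ is closed with $\mu(Z)=1$).
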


\begin{proof} Theorem \ref{t1.10} implies that
 $\Pt(\bigcap_{\alpha\in A}X_\alpha)=\bigcap_{\alpha\in A}\Pt(X_\alpha)$.
Then, by Lemma 2.17, $\HP(\cap_{\alpha \in A}X_\alpha
)=\Pt(\cap_{\alpha \in A}X_\alpha)\cap \HP(X)=\bigcap_{\alpha \in A}\Pt(X_\alpha
)\cap \HP(X)=\bigcap_{\alpha \in A}\HP(X_\alpha )$.
The theorem is proved.
\end{proof}

Now let us consider the question of continuity of the functor $\HP$. Let $A$ be a directed partially ordered set (which means that for any $\alpha,\beta\in A$ there exists a $\gamma\in A$ such that
$\gamma \ge \alpha$ и $\gamma\ge\beta$ ).

Let $\{X_\alpha,p_\alpha^\beta\}$ be an inverse system indexed by the set
$A$ and consisting of Tychonoff spaces. By $\invlim X_\alpha$ we denote the limit of that system, and by $p_\alpha:\invlim X_\alpha\to X_\alpha$,
$\alpha\in A$, -- the bonding maps.

The inverse system $\{X_\alpha,p_\alpha^\beta\}$ induces the inverse system
$\{\HP(X_\alpha),\HP(p_\alpha^\beta)\}$, whose limit is denoted by
$\invlim \HP(X_\alpha)$, and the limit projections by $\pr_\alpha:\invlim
\HP(X_\alpha)\to\HP(X_\alpha)$. The maps $\HP(p_\alpha):\HP(\invlim
X_\alpha)\to \HP(X_\alpha)$ induce a map $R:\HP(\invlim X_\alpha)\to \invlim
\HP(X_\alpha)$.

It is well-known that if all  $X_\alpha$ are compact, then the map $R$ is a homeomorphism. This follows from the continuity of the functor $P$ in the category of compacta \cite[VII.3.11]{9}.

\begin{theorem}\label{t2.19} The map $R:\HP(\invlim X_\alpha)\to \invlim
\HP(X_\alpha)$ is an embedding. If the limit projections
$p_\alpha:\invlim X_\alpha \to X_\alpha$ are dense, then the image $R(\HP(\invlim
X_\alpha))$ is dense $\invlim \HP(X_\alpha)$. If the index set $A$ is countable, then $R$ is a homeomorphism.
\end{theorem}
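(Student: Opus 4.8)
The first two assertions I would prove exactly as in Theorem~\ref{t1.11}, merely replacing $\Pt$ by $\HP$. The plan is to set $cX=\invlim\beta X_\alpha$ and to embed $X=\invlim X_\alpha$ into the compactum $cX$ via the natural map; by continuity of the functor $P$ on $\Comp$ the induced map $\bar R:P(cX)\to\invlim P(\beta X_\alpha)$ is a homeomorphism. Since $\HP$ preserves embeddings (Theorem~\ref{t2.4}), the map $R$ is the restriction of $\bar R$ to $\HP(X)\subset P(cX)$, with codomain cut down to $\invlim\HP(X_\alpha)$, and is therefore itself an embedding. If the projections $p_\alpha$ are dense, then $X$ is dense in $cX$, so by the fact that $\HP$ preserves maps with dense image (Proposition~\ref{p2.9}) the set $R(\HP(X))$ is dense in $P(cX)\cong\invlim P(\beta X_\alpha)$; since $R(\HP(X))\subset\invlim\HP(X_\alpha)\subset\invlim P(\beta X_\alpha)$, it is a fortiori dense in the intermediate space $\invlim\HP(X_\alpha)$.

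The substance of the theorem is the last assertion, so I would assume $A$ countable, say $A=\{\alpha_1,\alpha_2,\dots\}$. Because $R$ is already an embedding, it suffices to prove that $R$ is surjective, i.e. that $\bar R^{-1}(\invlim\HP(X_\alpha))\subset\HP(X)$. Let $q_\alpha:cX\to\beta X_\alpha$ denote the limit projections of $cX$, so that $q_\alpha|X=p_\alpha$ and, crucially, $X=\invlim X_\alpha=\bigcap_\alpha q_\alpha^{-1}(X_\alpha)$ inside $cX$. I would then set $Z_n=q_{\alpha_n}^{-1}(X_{\alpha_n})\subset cX$, so that $X=\bigcap_{n\in\IN}Z_n$.

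The key computation I would carry out is the identity $\mu_*(Z_n)=P(q_{\alpha_n})(\mu)_*(X_{\alpha_n})$ for every $\mu\in P(cX)$. The inequality $\ge$ is immediate, since $q_{\alpha_n}^{-1}(K)$ is a compact subset of $Z_n$ whenever $K\subset X_{\alpha_n}$ is compact; the inequality $\le$ follows from the observation that if $C\subset Z_n$ is compact then $C\subset q_{\alpha_n}^{-1}(q_{\alpha_n}(C))$ with $q_{\alpha_n}(C)\subset X_{\alpha_n}$ compact. This identity shows that, under the homeomorphism $\bar R$, the condition $P(q_{\alpha_n})(\mu)\in\HP(X_{\alpha_n})$ is equivalent to $\mu\in\HP(Z_n)$; hence $\bar R^{-1}(\invlim\HP(X_\alpha))=\bigcap_{n\in\IN}\HP(Z_n)$, where all indices occur precisely because $A$ is countable. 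Applying Theorem~\ref{t2.15} to the Tychonoff space $cX$ and its subsets $Z_n$ gives $\bigcap_{n\in\IN}\HP(Z_n)=\HP(\bigcap_{n\in\IN}Z_n)=\HP(X)$, which is exactly the required surjectivity; combined with the first assertion this makes $R$ a homeomorphism.

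The hard part will be precisely this reduction to a countable intersection: one must verify the measure-theoretic identity $\mu_*(Z_n)=P(q_{\alpha_n})(\mu)_*(X_{\alpha_n})$, so that the thread condition defining $\invlim\HP(X_\alpha)$ translates into simultaneous membership in the sets $\HP(Z_n)$, and one must confirm that countability of $A$ is exactly what licenses the invocation of Theorem~\ref{t2.15}. That countability cannot be dropped is signalled by Remark~\ref{r2.16}: for an uncountable index set $\HP$ fails to preserve the relevant intersection, and correspondingly $R$ need not be surjective.
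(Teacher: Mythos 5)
Your proposal is correct, and its first two assertions follow the paper verbatim (the paper proves them ``in a similar fashion to Theorem~\ref{t1.11}'', exactly as you do). For the substantive third assertion, however, you take a genuinely different route. The paper argues directly: given a thread $\{\mu_\alpha\}\in\invlim\HP(X_\alpha)$ with $\mu=\bar R^{-1}(\{\mu_\alpha\})$, it fixes $\e>0$ and a bijection $\xi:A\to\IN$, chooses compacta $K_\alpha\subset X_\alpha$ with $\mu_\alpha(K_\alpha)>1-\e\cdot 2^{-\xi(\alpha)}$, and verifies that the resulting compact $K\subset\invlim X_\alpha$ cut out by the $K_\alpha$ satisfies $\mu\bigl((\invlim X_\alpha)\bs K\bigr)\le\e$, whence $\mu\in\HP(\invlim X_\alpha)$. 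You instead translate the thread condition into $\mu\in\bigcap_{n}\HP(Z_n)$ for $Z_n=q_{\alpha_n}^{-1}(X_{\alpha_n})$ and invoke Theorem~\ref{t2.15}; this is sound (your verification of $\mu_*(Z_n)=P(q_{\alpha_n})(\mu)_*(X_{\alpha_n})$ is correct), and in fact your ``key computation'' could be dispensed with entirely, since it is precisely the compact-case instance of Theorem~\ref{t2.14} (preservation of preimages) applied to $q_{\alpha_n}:cX\to\beta X_{\alpha_n}$ and the subset $X_{\alpha_n}\subset\beta X_{\alpha_n}$, noting $\HP(cX)=P(cX)$. The two approaches rest on the same $\e/2^n$ mechanism --- the proof of Theorem~\ref{t2.15} is exactly the paper's estimate with $K=\bigcap_n K_n$ --- but your reduction is more modular: it avoids repeating the estimate inside the limit space, makes transparent that countability of $A$ enters exactly where Theorem~\ref{t2.15} is applied, and correctly ties the failure for uncountable $A$ to Remark~\ref{r2.16}. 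What the paper's direct argument buys in exchange is self-containedness and an explicit compact set witnessing $\mu_*(\invlim X_\alpha)=1$.
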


\begin{proof} The first two statements are proved in a similar fashion to corresponding statements of Theorem \ref{t1.11}.
Let us assume that the set $A$ is countable and show that the map
$R:\HP(\invlim X_\alpha)\to\invlim\HP(X_\alpha)$ is a homeomorphism. For this purpose it is sufficient to prove the surjectivity of the map $R$. Like in the proof of
1.11, let us embed the map $R$ in the homeomorphism $\bar R:P(\invlim \beta X_\alpha)\to \invlim
P(\beta X_\alpha)$.

Fix a thread $\{\mu_\alpha\}_{\alpha\in A}\in \invlim \HP(\beta X_\alpha)$. Let us show that $\mu=\bar R^{-1}(\{\mu_\alpha\}_{\alpha\in A})\in\HP(\invlim
X_\alpha)\subset \HP(\invlim \beta X_\alpha)$. Choose an $\e>0$. Fix a bijection
$\xi:A\to\IN$. For every $\alpha\in A$ choose a compact $K_\alpha\subset
X_\alpha$ satisfying $\mu_\alpha(K_\alpha)>1-\e\cdot 2^{-\xi(\alpha)}$. One can easily observe that the set
$K=\{(x_\alpha)_{\alpha\in A}\in\invlim X_\alpha\mid p_\alpha(x_\alpha)\in K_\alpha, \; \alpha\in A\}$
is compact. Furthermore, $\mu((\invlim X_\alpha)\bs K)\le \bigcup_{\alpha\in
A}\mu(p_\alpha^{-1}(X_\alpha\bs K_\alpha))=\bigcup_{\alpha\in A}\mu_\alpha(X_\alpha\bs
K_\alpha)\le\sum_{\alpha\in
A}\e\cdot 2^{-\xi(\alpha)}=\e$. Therefore, the map $R$ is surjective and the theorem is proved.
\end{proof}

Corollary \ref{c1.13} implies

\begin{proposition}\label{p2.20} The functor $\HP$ preserves homotopies, i.e. for any homotopy $H_t:X\to Y$ the homotopy $\HP(H_t):\HP(X)\to \HP(Y)$ is continuous as a map $\HP(H_{(\cdot)}):\HP(X)\times [0,1]\to \HP(Y)$.
\end{proposition}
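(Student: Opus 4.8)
The plan is to deduce this from Corollary~\ref{c1.13}, exploiting that $\HP$ is a subfunctor of $\Pt$. Throughout I regard $\HP(X)\subset\Pt(X)$ and $\HP(Y)\subset\Pt(Y)$ as subspaces, which is legitimate because both functors preserve embeddings (Theorem~\ref{t1.4} and Theorem~\ref{t2.4}). Given a homotopy $H:X\times[0,1]\to Y$ with $H_t(x)=H(x,t)$, Corollary~\ref{c1.13} already asserts that the map $\Pt(H_{(\cdot)}):\Pt(X)\times[0,1]\to\Pt(Y)$, $(\mu,t)\mapsto\Pt(H_t)(\mu)$, is continuous. The whole strategy is to identify $\HP(H_{(\cdot)})$ with the corresponding restriction of this already-continuous map.

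First I would verify that $\HP(H_{(\cdot)})$ is exactly the restriction of $\Pt(H_{(\cdot)})$ to $\HP(X)\times[0,1]$, and that this restriction takes values in $\HP(Y)$. Since $\HP$ is a subfunctor of $\Pt$, for each fixed $t\in[0,1]$ the map $\HP(H_t)$ is the restriction of $\Pt(H_t)$ to $\HP(X)$, and by Theorem~\ref{t2.4} its image lies in $\HP(Y)$. Hence for every $(\mu,t)\in\HP(X)\times[0,1]$ one has $\Pt(H_{(\cdot)})(\mu,t)=\Pt(H_t)(\mu)=\HP(H_t)(\mu)\in\HP(Y)$, so $\Pt(H_{(\cdot)})$ carries $\HP(X)\times[0,1]$ into $\HP(Y)$ and agrees there with $\HP(H_{(\cdot)})$. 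Then I would invoke the elementary facts that $\HP(X)\times[0,1]$ carries precisely the subspace topology inherited from $\Pt(X)\times[0,1]$ (a product of subspaces is a subspace of the product) and that $\HP(Y)$ carries the subspace topology from $\Pt(Y)$; continuity of $\Pt(H_{(\cdot)})$ then restricts to continuity of $\HP(H_{(\cdot)}):\HP(X)\times[0,1]\to\HP(Y)$, which is the assertion.

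I do not anticipate a genuine obstacle here: the argument reduces to bookkeeping about subspace topologies and the subfunctor relation, with all the analytic content already packaged in Corollary~\ref{c1.13}. If one preferred a self-contained version that mirrors the proof of Corollary~\ref{c1.13} rather than quoting it, the alternative would be to factor $\HP(H_{(\cdot)})=\HP(H)\circ j_{X,[0,1]}$, where $j_{X,[0,1]}:\HP(X)\times[0,1]\to\HP(X\times[0,1])$ is defined by $j_{X,[0,1]}(\mu,t)=\HP(i_t)(\mu)$ (the restriction of the $\Pt$-map of Proposition~\ref{p1.12}, landing in $\HP(X\times[0,1])$ because $i_t$ is an embedding, again by Theorem~\ref{t2.4}); the factorization holds since $H\circ i_t=H_t$, and the only point requiring a line of care is the codomain restriction of $j_{X,[0,1]}$.
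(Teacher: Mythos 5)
Your argument is correct and is essentially the paper's own: the paper derives Proposition~\ref{p2.20} directly from Corollary~\ref{c1.13} by restriction to the subspaces $\HP(X)\times[0,1]$ and $\HP(Y)$, exactly as you do. One small citation slip: the fact that $\HP(H_t)$ carries $\HP(X)$ into $\HP(Y)$ is not Theorem~\ref{t2.4} (which concerns embeddings, and $H_t$ need not be one) but simply the functoriality of $\HP$ established in Theorem~\ref{t0.1.}; with that correction your bookkeeping is complete.
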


Now we will consider the operation of tensor product of Radon probability measures. It is well-known (see \cite[VIII,\S 1]{9}) that given a family $\{X_\alpha \}_{\alpha \in A}$ of compacts, for any probability measures $\mu_\alpha \in P(X_\alpha )$, $\alpha \in A$,
there exists a unique measure $\oplu\limits_{\alpha \in A}\mu_\alpha \in
P(\prod_{\alpha \in A}X_\alpha )$ (which is called the tensor product of measures
$\mu_\alpha $ ) on the product $\prod_{\alpha \in A}X_\alpha $ satisfying the following condition: for any finite $B\subset A$ and any Borel sets $Y_\alpha \subset X_\alpha $, $\alpha \in A$, where
$Y_\alpha =X_\alpha $, if $\alpha \notin B$, the following holds:
$\oplu\limits_{\alpha \in A}\mu_\alpha(\prod_{\alpha \in A}Y_\alpha
)=\prod_{\alpha
\in A}\mu_\alpha (Y_\alpha )$.

\begin{proposition}\label{p2.21} Let $\{X_\alpha\}_{\alpha \in A}$ be a family of Tychonoff spaces, $\{c\,X_\alpha\}_{\alpha \in A}$ be a family of their compactifications and $\mu_\alpha \in\HP(X_\alpha )\subset P(c\,
X_\alpha )$, $\alpha \in A$, be a family of Radon probability measures. If the index set $A$ is at most countable, then
$\oplu\limits_{\alpha \in A}\mu_\alpha \in\HP(\prod_{\alpha \in
A}X_\alpha )\subset P(\prod_{\alpha \in A}c\,X_\alpha )$.
\end{proposition}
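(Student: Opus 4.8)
The plan is to reduce the statement to the Radon criterion already used in the proof of Theorem~\ref{t0.1.}: a measure $\nu\in P(cZ)$ belongs to $\HP(Z)$ precisely when for every $\e>0$ there is a compact set $K\subset Z$ with $\nu(K)>1-\e$. Writing $\mu=\oplu\limits_{\alpha\in A}\mu_\alpha\in P(\prod_{\alpha\in A}cX_\alpha)$, it therefore suffices to produce, for an arbitrary $\e>0$, a compact subset of $\prod_{\alpha\in A}X_\alpha$ whose $\mu$-measure exceeds $1-\e$.

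First I would fix a bijection (or injection) $\xi:A\to\IN$, which exists because $A$ is at most countable. Since each $\mu_\alpha$ is Radon, i.e. $(\mu_\alpha)_*(X_\alpha)=1$, for every $\alpha\in A$ I can choose a compact set $K_\alpha\subset X_\alpha$ with $\mu_\alpha(K_\alpha)>1-\e\cdot 2^{-\xi(\alpha)}$. Put $K=\prod_{\alpha\in A}K_\alpha$. As a product of compacta, $K$ is compact, and $K\subset\prod_{\alpha\in A}X_\alpha$ because each $K_\alpha\subset X_\alpha$.

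The key step is the estimate of $\mu(K)$. I would pass to the complement inside $\prod_{\alpha\in A}cX_\alpha$ and use the decomposition into cylinders
$$
\Bigl(\prod_{\alpha\in A}cX_\alpha\Bigr)\bs K=\bigcup_{\alpha\in A}\pr_\alpha^{-1}(cX_\alpha\bs K_\alpha).
$$
By the defining property of the tensor product (applied with the one-element index set $B=\{\alpha\}$, taking $Y_\alpha=cX_\alpha\bs K_\alpha$ and $Y_\beta=cX_\beta$ for $\beta\ne\alpha$), each cylinder satisfies $\mu\bigl(\pr_\alpha^{-1}(cX_\alpha\bs K_\alpha)\bigr)=\mu_\alpha(cX_\alpha\bs K_\alpha)=1-\mu_\alpha(K_\alpha)<\e\cdot 2^{-\xi(\alpha)}$. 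Here countability is essential: the displayed union is countable, so countable subadditivity gives $\mu\bigl((\prod_{\alpha\in A}cX_\alpha)\bs K\bigr)\le\sum_{\alpha\in A}\e\cdot 2^{-\xi(\alpha)}\le\e$, whence $\mu(K)\ge 1-\e$. Since $\e>0$ was arbitrary and $K$ is a compact subset of $\prod_{\alpha\in A}X_\alpha$, the Radon criterion yields $\mu\in\HP(\prod_{\alpha\in A}X_\alpha)$.

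The main obstacle is precisely the reliance on countability: the weighting $\e\cdot 2^{-\xi(\alpha)}$ together with the countable subadditivity of $\mu$ is what controls the measure of the complement, and both ingredients break down for uncountable $A$ (one cannot sum uncountably many positive numbers to a finite total, nor invoke countable subadditivity). Everything else is the routine verification that the box $\prod_{\alpha\in A}K_\alpha$ is an admissible compact subset of $\prod_{\alpha\in A}X_\alpha$ and that its complement is the stated countable union of cylinders, each of whose measures is computed from the marginals through the defining relation of $\oplu$.
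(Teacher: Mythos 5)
Your proposal is correct and follows essentially the same route as the paper: an injection $\xi:A\to\IN$, compacta $K_\alpha\subset X_\alpha$ with $\mu_\alpha(c\,X_\alpha\bs K_\alpha)<\e\cdot 2^{-\xi(\alpha)}$, and the estimate $\mu\bigl(\prod_{\alpha\in A}c\,X_\alpha\bs\prod_{\alpha\in A}K_\alpha\bigr)\le\sum_{\alpha\in A}\e\cdot 2^{-\xi(\alpha)}\le\e$ via the marginals of the tensor product. You merely make explicit the cylinder decomposition of the complement (which the paper leaves implicit) and you correct a typo in the paper's bound, which is printed as $\e/2^{-\xi(\alpha)}$.
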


\begin{proof} Let us show that the measure $\oplu_{\alpha \in A}\mu_\alpha $
belongs to the set  $\HP(\prod_{\alpha \in A}X_\alpha)\subset
P(\prod_{\alpha\in
A}c\, X_\alpha)$. For this purpose fix an arbitrary $\e>0$. As the set $A$ is at most countable, there exists an injection $\xi:A\to\IN$. Since every measure $\mu_\alpha \in\HP(X_\alpha )$ is Radon, for every
$\alpha\in A$ there exists a compact $K_\alpha \subset X_\alpha \subset c\,
X_\alpha $ such that $\mu_\alpha (c\, X_\alpha \bs K_\alpha )<\e/2^{-\xi(\alpha
)}$. Then for the compact space $\prod_{\alpha \in A}K_\alpha \subset \prod_{\alpha\in
A}X_\alpha $ we get: $$(\oplu\limits_{\alpha\in
A}\mu_\alpha)(\prod_{\alpha\in
A}c\, X_\alpha \bs \prod_{\alpha\in A}K_\alpha)\le \sum_{\alpha\in A}\mu_\alpha(c\,
X_\alpha\bs K_\alpha)<\sum_{\alpha\in A}\e/2^{-\xi(\alpha)}\le \e.$$ Consequently,
$\oplu_{\alpha\in A}\mu_\alpha\in \HP(\prod_{\alpha\in A}X_\alpha)$.
\end{proof}

\begin{remark}\label{r2.22} Proposition \ref{p2.21} and well-known facts about the tensor product of probability measures on compact spaces implies that for any at most countable set of Radon probability measures
$\mu_\alpha\in\HP(X_\alpha)$, $\alpha\in A$, on Tychonoff spaces $X_\alpha$,
there exists a unique Radon probability measure
$\oplu\limits_{\alpha \in A}\mu_\alpha \in
\HP(\prod_{\alpha \in A}X_\alpha )$ (which is called {\em the tensor product of measures}
$\mu_\alpha $ ) such that for any Borel sets
$Y_\alpha \subset X_\alpha $, $\alpha \in A$,
we have the following equality:
$(\oplu\limits_{\alpha \in A}\mu_\alpha)(\prod_{\alpha \in A}Y_\alpha
)=\prod_{\alpha
\in A}\mu_\alpha (Y_\alpha )$.
\end{remark}

\begin{remark}\label{r2.23} Proposition \ref{p2.21} does not hold if the index set $A$ is uncountable. Indeed, if every measure $\mu_\alpha\in X_\alpha$,
$\alpha\in A$, has a non-compact support
$\supp_{cX_\alpha}(\mu_\alpha )\cap X_\alpha $, then it can be shown that the measure
$(\oplu\limits_{\alpha\in A}\mu_\alpha)(K)$ of any compact set
$K\subset \prod_{\alpha\in A}X_\alpha\subset \prod_{\alpha\in A}c\, X_\alpha$ is zero.
\end{remark}

For every Tychonoff space $X$ let us define a map $\delta_X:X\to \HP(X)$ assigning to each point $x\in X$  the Dirac measure
$\delta_X(x)$, concentrated at the point $x$.

Theorem \ref{t1.14} implies

\begin{theorem}\label{t2.24} The family $\delta=\{\delta_X\}$ defines a unique natural transformation of the identity functor $\Id:\Tych\to \Tych$ to the functor $\HP:\Tych\to\Tych$, whose  components
$\delta_X:X\to\HP(X)$ are a closed embeddings.
\end{theorem}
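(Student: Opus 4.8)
The plan is to deduce the statement from Theorem~\ref{t1.14}, which already establishes the analogous assertion for the functor $\Pt$, and then to supply the uniqueness argument, which is the only genuinely new ingredient.

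First I would observe that every Dirac measure $\delta_X(x)$ is Radon: it assigns measure one to the compact singleton $\{x\}\subset X$, so it belongs to $\HP(X)$. Consequently the map $\delta_X:X\to\Pt(X)$ of Theorem~\ref{t1.14} actually takes values in the subspace $\HP(X)\subset\Pt(X)$ and may be regarded as a map $\delta_X:X\to\HP(X)$. Since $\HP$ is a subfunctor of $\Pt$ (each $\HP(f)$ is the restriction of $\Pt(f)$), every naturality square for $\delta$ with respect to $\HP$ is the corestriction of the corresponding square for $\Pt$, which commutes by Theorem~\ref{t1.14}; thus $\delta=\{\delta_X\}$ is a natural transformation $\Id\to\HP$.

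Next I would verify that each $\delta_X:X\to\HP(X)$ is a closed embedding. By Theorem~\ref{t1.14} the image $\delta_X(X)$ is closed in $\Pt(X)$ and $\delta_X$ is a homeomorphism onto its image in $\Pt(X)$. Since $\HP(X)$ carries the subspace topology inherited from $\Pt(X)$ and $\delta_X(X)\subset\HP(X)$, the set $\delta_X(X)$ is closed in $\HP(X)$ as well, and the topology it inherits from $\HP(X)$ coincides with the one inherited from $\Pt(X)$. Hence $\delta_X:X\to\HP(X)$ remains a closed embedding.

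Finally, for uniqueness, suppose $\eta=\{\eta_X\}$ is any natural transformation $\Id\to\HP$. I would exploit the one-point space $*$: since $\beta *=*$ and $P(*)$ is a singleton, $\HP(*)$ consists of the single measure $\delta_*$, so $\eta_*=\delta_*$ is forced. For an arbitrary point $x$ of a Tychonoff space $X$, I would apply naturality to the constant map $c_x:*\to X$ with $c_x(*)=x$; the corresponding commutative square gives $\eta_X(x)=\HP(c_x)(\eta_*(*))=\HP(c_x)(\delta_*)$. A direct computation from $\HP(c_x)(\mu)(A)=\mu(c_x^{-1}(A))$ shows $\HP(c_x)(\delta_*)=\delta_X(x)$, whence $\eta_X=\delta_X$ for every $X$ and $\eta=\delta$. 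I expect no serious obstacle here; the only point requiring a little care is checking that the subspace topologies agree, so that the closed-embedding property transfers from $\Pt$ to $\HP$, but this is routine once the inclusion $\delta_X(X)\subset\HP(X)$ is established.
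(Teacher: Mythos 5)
Your proposal is correct and follows the same route as the paper, whose entire proof of Theorem~\ref{t2.24} is the one-line remark that it follows from Theorem~\ref{t1.14}: your corestriction argument (Dirac measures are Radon, naturality squares restrict, and $\delta_X(X)$ closed in $\Pt(X)$ remains closed in the subspace $\HP(X)$) is exactly the implicit content of that deduction. You in fact go beyond the paper by supplying a correct proof of the uniqueness claim via the singleton $\HP(*)$ and naturality along constant maps $c_x:*\to X$ --- a point the paper never argues, since the word ``unique'' does not even appear in Theorem~\ref{t1.14}, so your proposal is, if anything, more complete than the original.
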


In a similar fashion to Theorem \ref{t1.15} we can prove

\begin{theorem}\label{t2.25} The functor $\HP$ preserves the density of Tychonoff spaces, i.e. $d(\HP(X))=d(X)$ for any infinite Tychonoff space $X$.
\end{theorem}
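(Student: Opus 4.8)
The plan is to follow the proof of Theorem~\ref{t1.15} almost verbatim, the only genuinely new point being that the approximating measures used there already lie in $\HP(X)$. First I would fix a dense subset $A\subset X$ with $|A|=d(X)$ and form
$$
B=\Bigl\{\textstyle\sum_{i=1}^n r_i\,\delta_X(x_i)\ \Bigm|\ n\in\IN,\ x_i\in A,\ r_i\ \text{nonnegative rational},\ \textstyle\sum_{i=1}^n r_i=1\Bigr\}.
$$
Each element of $B$ is a probability measure whose support is the finite, hence compact, set $\{x_1,\dots,x_n\}\subset X$, so it is Radon; thus $B\subset\HP(X)$. This is exactly the observation that distinguishes the present argument from the one for $\Pt$, where one only needed $B\subset\Pt(X)$.

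Next I would show that $B$ is dense in $\HP(X)$. Since $A$ is dense in $X$ and $X$ is dense in $\beta X$, the set $A$ is dense in the compactum $\beta X$; by the classical fact that finitely supported probability measures with rational weights concentrated on a dense subset of a compactum are dense in the space of all probability measures, $B$ is dense in $P(\beta X)$. Because $B\subset\HP(X)\subset P(\beta X)$, density in the ambient space passes to the subspace, since $\cl_{\HP(X)}B=\cl_{P(\beta X)}B\cap\HP(X)=P(\beta X)\cap\HP(X)=\HP(X)$. As $d(X)$ is infinite, $|B|=|A|=d(X)$, which already yields $d(\HP(X))\le d(X)$.

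For the reverse inequality one is tempted, as in Theorem~\ref{t1.15}, simply to invoke Theorem~\ref{t2.24}, which makes $\delta_X\colon X\to\HP(X)$ a closed embedding; this is where I expect the real care to be needed, because passing to a closed subspace does not in general decrease density (the Niemytzki plane is separable yet has a closed discrete subspace of size $\frak c$). I would therefore argue $d(X)\le d(\HP(X))$ directly: given a dense $D\subset\HP(X)$, for each $x\in X$ and each open $U\ni x$ choose, by complete regularity, a function $f\in C_b(X)$ with $f(x)=1$ and $f\equiv 0$ off $U$; then $\{\nu\in\HP(X):\nu(f)>1/2\}$ is a neighbourhood of $\delta_X(x)$, so some $\mu\in D$ has $\mu(U)\ge\mu(f)>0$, whence $U$ meets $\supp(\mu)\cap X$. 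Collecting a countable dense subset of each (separable, cf.\ Remark~\ref{r2.12}) support $\supp(\mu)\cap X$, $\mu\in D$, produces a dense subset of $X$ of cardinality at most $|D|\cdot\aleph_0=|D|$, so $d(X)\le|D|$. Applying this to a dense $D$ of minimal cardinality $d(\HP(X))$ gives $d(X)\le d(\HP(X))$, and the two inequalities yield $d(\HP(X))=d(X)$.
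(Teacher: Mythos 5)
Your first half — the set $B$ of finitely supported measures with rational weights over a dense $A\subset X$, the observation that such measures are Radon because their supports are finite, and the passage $\cl_{\HP(X)}B=\cl_{P(\beta X)}B\cap\HP(X)=\HP(X)$ — is exactly the argument the paper intends: its proof of this theorem is literally ``in a similar fashion to Theorem~\ref{t1.15}'', and the proof of Theorem~\ref{t1.15} consists of exhibiting precisely this set $B$ and noting $|B|=d(X)$. So for the inequality $d(\HP(X))\le d(X)$ you and the paper agree. You are also right that the paper is silent about the reverse inequality, and your example (density is not monotone under closed subspaces) correctly identifies why $\delta_X$ being a closed embedding does not finish the job. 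Unfortunately, your patch for $d(X)\le d(\HP(X))$ contains a genuine error: it rests on the claim that the support of \emph{any} Radon measure is separable, which is false for general Tychonoff spaces. The standard counterexample: on the Cantor cube $\{0,1\}^\kappa$ with $\kappa>\frak c$, the product measure is Radon (the space is compact, and on compacta regular Borel measures are Radon) and its support is the whole cube, whose density is $\min\{\lambda : 2^\lambda\ge\kappa\}>\aleph_0$. Remark~\ref{r2.12}, which you cite, is made in the context of Propositions~\ref{p2.6} and~\ref{p2.10}, i.e.\ for \emph{metric} spaces, where the claim does hold (the support of a $\tau$-smooth measure is ccc, and ccc plus metrizability gives separability); transplanted to arbitrary Tychonoff $X$ it fails, and with it your bound $d(X)\le|D|\cdot\aleph_0$.

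Note that the failure is not peripheral: taking $X=\{0,1\}^\kappa$ itself (compact, so $\HP(X)=P(X)$), your method would need every measure in a dense $D\subset P(X)$ to have separable support, which is exactly what the product measure refutes; the reverse inequality in the compact case is the nontrivial classical fact that $P$ preserves density of compacta (\cite{2}, \cite{9}), and it cannot be recovered by the union-of-supports count. Your support argument does correctly prove the reverse inequality whenever every Radon measure on $X$ has separable support — in particular for metrizable $X$ — and the first step of it (that $\bigcup_{\mu\in D}(\supp(\mu)\cap X)$ is dense in $X$, after normalizing your Urysohn function to satisfy $0\le f\le 1$ so that $\mu(U)\ge\mu(f)$) is sound in general; what is missing is any control on the \emph{density} of the individual supports. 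For the general statement one has to import the compact-case preservation of density in some form rather than argue through separable supports; as written, your proof establishes only $d(\HP(X))\le d(X)$ in full generality, i.e.\ the same half that the paper's sketch makes explicit.
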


Theorem \ref{t1.16}---\ref{t1.22}, and also Lemma \ref{l2.17} imply

\begin{theorem}\label{t2.26} The functor $\HP$ preserves the weight of Tychonoff spaces, i.e.
  $w(\HP(X))=w(X)$ for any infinite Tychonoff space $X$.
\end{theorem}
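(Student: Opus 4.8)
The plan is to establish $w(\HP(X))=w(X)$ by proving the two inequalities $w(\HP(X))\le w(X)$ and $w(X)\le w(\HP(X))$ separately, exactly mirroring the argument used for $\Pt$ in Theorem~\ref{t1.16}.

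For the inequality $w(\HP(X))\le w(X)$ I would exploit the fact that $\HP(X)$ sits inside $\Pt(X)$ as a subspace. Since $\HP$ is a subfunctor of $\Pt$ and $\Pt$ preserves embeddings (Theorem~\ref{t1.4}), the space $\HP(X)=\{\mu\in\Pt(X)\mid\mu_*(X)=1\}$ carries the topology induced from $\Pt(X)$. Because the weight of a subspace never exceeds the weight of the ambient space (just restrict a base), Theorem~\ref{t1.16} yields $w(\HP(X))\le w(\Pt(X))=w(X)$. Alternatively, and more in the spirit of the proof of Theorem~\ref{t1.16}, I would pick a compactification $cX$ with $w(cX)=w(X)$ (\cite[3.5.2]{6}), observe that $\HP(X)\subset\Pt(X)=\{\mu\in P(cX)\mid\mu^*(X)=1\}$ embeds into $P(cX)$, and use that $P$ preserves weight (\cite[VII.3.9]{9}) to conclude $w(\HP(X))\le w(P(cX))=w(cX)=w(X)$.

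For the reverse inequality $w(X)\le w(\HP(X))$ I would use the Dirac transformation. By Theorem~\ref{t2.24} the map $\delta_X:X\to\HP(X)$ is a closed embedding, so $X$ is homeomorphic to a subspace of $\HP(X)$, whence $w(X)=w(\delta_X(X))\le w(\HP(X))$. Combining the two inequalities gives $w(\HP(X))=w(X)$, and the hypothesis that $X$ is infinite guarantees $w(X)\ge\aleph_0$, so no degenerate finite-weight issues arise.

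Since both steps reduce to the standard monotonicity of weight under passage to subspaces and to results already established for $\Pt$, for $P$, and for the Dirac embedding, there is no genuine obstacle here; the only point demanding a little care is verifying that $\HP(X)$ really carries the subspace topology inherited from $\Pt(X)$ (equivalently, from $P(\beta X)$), which is precisely what the embedding-preservation results of the previous section guarantee.
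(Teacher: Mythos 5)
Your proof is correct and matches the paper's intended argument: the paper gives no explicit proof, merely noting that Theorems \ref{t1.16}--\ref{t1.22} (and Lemma \ref{l2.17}) imply the result, and your filling-in --- the upper bound $w(\HP(X))\le w(\Pt(X))=w(X)$ via subspace monotonicity of weight (or directly via a compactification $cX$ with $w(cX)=w(X)$ and weight-preservation of $P$), and the lower bound via the closed Dirac embedding $\delta_X:X\to\HP(X)$ of Theorem \ref{t2.24} --- is exactly the natural reading of that citation.
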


\begin{theorem}\label{t2.27} The functor $\HP$ preserves the class of metrizable spaces.
\end{theorem}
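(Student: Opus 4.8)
The plan is to exploit the fact that $\HP(X)$ sits inside $\Pt(X)$ as a genuine topological subspace, and then to invoke the metrizability of $\Pt(X)$ already established in Theorem \ref{t1.17}. For any Tychonoff space $X$ we have the chain $\HP(X)\subset\Pt(X)\subset P(\beta X)$, and since $\HP$ is a subfunctor of $\Pt$ that preserves embeddings (Theorem \ref{t2.4}), the topology that $\HP(X)$ inherits as a subspace of $\Pt(X)$ coincides with the one it inherits directly from $P(\beta X)$. Thus $\HP(X)$ is a topological subspace of $\Pt(X)$, and this identification is exactly what the proof will rest on.

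First I would observe that if $X$ is metrizable, then by Theorem \ref{t1.17} the space $\Pt(X)$ is metrizable. Next I would recall the elementary and completely standard fact that metrizability is hereditary: every subspace of a metrizable space is itself metrizable, since one simply restricts the metric. Applying this to the subspace $\HP(X)\subset\Pt(X)$ immediately yields that $\HP(X)$ is metrizable, which closes the argument in essentially one line.

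The only point demanding a moment of care, and the closest thing to an obstacle, is confirming that $\HP(X)$ really carries the subspace topology of $\Pt(X)$ rather than some a priori distinct topology. This is handled by the transitivity of the subspace topology along the chain $\HP(X)\subset\Pt(X)\subset P(\beta X)$, combined with the convention (justified by the embedding preservation of Theorem \ref{t2.4}) that $\HP$ is treated as a subfunctor of $\Pt$. Since no genuine difficulty arises beyond this bookkeeping, I would keep the proof brief and simply reduce it to Theorem \ref{t1.17} together with the hereditariness of metrizability.
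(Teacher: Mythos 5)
Your proof is correct and coincides with the paper's own (implicit) argument: the paper derives Theorem~\ref{t2.27} directly from Theorem~\ref{t1.17}, regarding $\HP(X)$ as a subspace of the metrizable space $\Pt(X)$ and invoking the hereditariness of metrizability, exactly as you do. Nothing further is needed.
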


\begin{proposition}\label{p2.28} The functor $\HP$ preserves \v Cech-complete spaces.
\end{proposition}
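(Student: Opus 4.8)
The plan is to reduce the proposition to Corollary~\ref{c1.20}, which already records that $\Pt$ preserves \v Cech-completeness, by observing that on a \v Cech-complete space the two functors coincide: $\HP(X)=\Pt(X)$. The whole argument then becomes a short chain of citations, so the real content is packaged into the equality of the two measure spaces.

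First I would recall the standard characterization \cite[3.9.1]{6}: a Tychonoff space $X$ is \v Cech-complete exactly when $X$ is a $G_\delta$-subset of $\beta X$ (equivalently, of any compactification). In particular a \v Cech-complete $X$ is a \emph{Borel} subset of the compact space $\beta X$. This is precisely the hypothesis under which, as noted in the introduction, every $\tau$-smooth measure on $X$ is already Radon.

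Next I would establish $\HP(X)=\Pt(X)$ rigorously. The clean way is to apply Lemma~\ref{l2.17} with the ambient Tychonoff space taken to be the compactum $\beta X$ and its Borel subset taken to be $X$; this yields $\HP(X)=\Pt(X)\cap\HP(\beta X)$ inside $P(\beta X)$. Since $\beta X$ is compact, every $\mu\in P(\beta X)$ is regular with respect to closed (hence compact) sets, so $\HP(\beta X)=P(\beta X)$ and the intersection collapses to $\HP(X)=\Pt(X)$. Equivalently and more directly: for Borel $X\subset\beta X$ one has $\mu^*(X)=\mu_*(X)=\mu(X)$ for every $\mu\in P(\beta X)$, whence both $\{\mu\mid\mu^*(X)=1\}$ and $\{\mu\mid\mu_*(X)=1\}$ equal $\{\mu\mid\mu(X)=1\}$.

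Finally I would invoke Corollary~\ref{c1.20} to conclude that $\Pt(X)$ is \v Cech-complete, and then transport this through the equality $\HP(X)=\Pt(X)$ to obtain that $\HP(X)$ is \v Cech-complete as well. The one step deserving care---and the only place the argument could go wrong---is the justification that $\HP(\beta X)=P(\beta X)$, i.e. that regularity of a Borel measure on the compact Hausdorff space $\beta X$ automatically gives inner regularity with respect to compacta; once this is secured, the coincidence $\HP(X)=\Pt(X)$ and hence the whole proposition follow immediately.
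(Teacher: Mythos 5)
Your proof is correct and follows essentially the same route the paper intends: Proposition~\ref{p2.28} is stated there without a separate argument, being attributed to Theorems~\ref{t1.16}--\ref{t1.22} (which include Corollary~\ref{c1.20}) together with Lemma~\ref{l2.17}, and the intended reduction is exactly yours --- a \v Cech-complete $X$ is a $G_\delta$, hence Borel, subset of $\beta X$, so $\HP(X)=\Pt(X)$ (as the introduction also notes for spaces Borel in $\beta X$), and Corollary~\ref{c1.20} finishes the proof. Your one flagged point, $\HP(\beta X)=P(\beta X)$, is secured by the paper's own remark that a regular measure on a compact Hausdorff space is automatically Radon, since closed subsets of $\beta X$ are compact.
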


\begin{proposition}\label{p2.29} If $A$ is a Baire subset of a Tychonoff space $X$, then the function $\hat\chi_A:\HP(X)\to[0,1]$,
where $\hat\chi_A(\mu)=\mu(A)$, $\mu\in\HP(X)$, is measurable with respect to the $\sigma $-algebra of Baire subsets of $\HP(X)$.
\end{proposition}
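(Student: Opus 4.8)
The plan is to realize $\hat\chi_A$ as the restriction to $\HP(X)$ of the function $\dot\chi_A$ already analysed in Lemma~\ref{l1.19} and Corollary~\ref{c1.21}, and then to invoke the fact that Baire measurability passes to subspaces. This reduces the whole statement to an observation about traces of Baire $\sigma$-algebras.

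First I would check that $\hat\chi_A=\dot\chi_A|\HP(X)$. Indeed, for a measure $\mu\in\HP(X)\subset\Pt(X)$ the Radon measure on $X$ associated with $\mu$ is exactly the induced measure $\tilde\mu$ of Remark~\ref{r1.2}, so that for every Baire (hence Borel) subset $A\subset X$ we have $\hat\chi_A(\mu)=\mu(A)=\tilde\mu(A)=\mu^*(A)=\dot\chi_A(\mu)$. Since the functor $\HP$ preserves embeddings (Theorem~\ref{t2.4}), the space $\HP(X)$ carries the topology inherited as a subspace of $\Pt(X)$, so that $\hat\chi_A$ is genuinely the restriction of $\dot\chi_A$.

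Next I would use the following elementary fact. The Baire $\sigma$-algebra $\mathcal B_0(Z)$ of a space $Z$ is generated by its functionally closed subsets, i.e. by the zero-sets $g^{-1}(\{0\})$, $g\in C(Z)$. If $Y\subset Z$ is a subspace, then each $g\in C(Z)$ restricts to $g|Y\in C(Y)$ and $g^{-1}(\{0\})\cap Y=(g|Y)^{-1}(\{0\})$ is functionally closed in $Y$; consequently the trace $\{B\cap Y\mid B\in\mathcal B_0(Z)\}$ is a $\sigma$-algebra contained in $\mathcal B_0(Y)$. Applying this with $Z=\Pt(X)$ and $Y=\HP(X)$, for every Borel set $C\subset[0,1]$ one gets $\hat\chi_A^{-1}(C)=\dot\chi_A^{-1}(C)\cap\HP(X)$, which lies in $\mathcal B_0(\HP(X))$ because $\dot\chi_A^{-1}(C)\in\mathcal B_0(\Pt(X))$ by Corollary~\ref{c1.21}. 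Hence $\hat\chi_A$ is measurable with respect to the $\sigma$-algebra of Baire subsets of $\HP(X)$.

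The only genuinely delicate points are the identity $\hat\chi_A=\dot\chi_A|\HP(X)$, which rests on the coincidence of the Radon value $\mu(A)$ with the outer measure $\mu^*(A)$ for $\mu\in\HP(X)$, and the observation that the trace of the Baire $\sigma$-algebra is contained in the Baire $\sigma$-algebra of the subspace. Both are routine once spelled out, and the substance of the argument is the single appeal to Corollary~\ref{c1.21}; everything else is bookkeeping.
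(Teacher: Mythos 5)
Your proof is correct and follows essentially the same route as the paper, which states Proposition~\ref{p2.29} as an immediate consequence of the $\Pt$-results (in particular Corollary~\ref{c1.21}) transferred to $\HP(X)$ as a subspace of $\Pt(X)$ --- exactly your restriction argument. The two details you isolate, namely the identity $\hat\chi_A=\dot\chi_A|\HP(X)$ (which holds since $\tilde\mu(A)=\mu^*(A)$ by Lemma~\ref{l1.1}) and the inclusion of the trace of $\mathcal B_0(\Pt(X))$ on $\HP(X)$ in $\mathcal B_0(\HP(X))$, are precisely the routine steps the paper leaves unspoken.
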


\begin{theorem}\label{t2.30} The functor $\HP$ preserves Baire subsets. Moreover, for any ordinal number $\xi$, if $A\in\M_\xi(X)$, then
$\HP(A)\in\M_\xi(\HP(X))$; for every even ordinal number $\alpha$, if $A\in
\F_\alpha(X)$, then $\HP(A)\in\F_\alpha(\HP(X))$.
\end{theorem}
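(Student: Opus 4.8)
The plan is to reduce the statement to the corresponding assertion for the functor $\Pt$, namely Theorem \ref{t1.22}, using Lemma \ref{l2.17} together with the fact that the Baire hierarchy is inherited by subspaces.

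First I would record the following elementary \emph{trace} fact, proved by transfinite induction: if $Y$ is a subspace of a Tychonoff space $Z$, then for every ordinal $\xi$ the trace $S\cap Y$ of a set $S\in\M_\xi(Z)$ belongs to $\M_\xi(Y)$, and likewise for the classes $\A_\xi$; analogously $S\in\F_\alpha(Z)$ (resp. $S\in\G_\alpha(Z)$) implies $S\cap Y\in\F_\alpha(Y)$ (resp. $S\cap Y\in\G_\alpha(Y)$). At the base level $\xi=0$ this holds because the restriction of a continuous function $f:Z\to\IR$ to $Y$ is continuous, so $f^{-1}(0)\cap Y=(f|Y)^{-1}(0)$ is functionally closed in $Y$ (and dually for functionally open sets). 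The inductive step is immediate, since the operation $S\mapsto S\cap Y$ commutes with countable unions and countable intersections.

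Second, let $A\in\M_\xi(X)$. By Theorem \ref{t1.22} we have $\Pt(A)\in\M_\xi(\Pt(X))$. A Baire set is in particular a Borel subset of $X$, so Lemma \ref{l2.17} applies and gives the exact equality $\HP(A)=\Pt(A)\cap\HP(X)$, where $\HP(X)$ is regarded as a subspace of $\Pt(X)$. Applying the trace fact with $Z=\Pt(X)$ and $Y=\HP(X)$ then yields $\HP(A)\in\M_\xi(\HP(X))$. Running the same argument with the additive Borel class $\F_\alpha$ (for even $\alpha$) in place of $\M_\xi$, and invoking the corresponding part of Theorem \ref{t1.22}, gives $\HP(A)\in\F_\alpha(\HP(X))$ whenever $A\in\F_\alpha(X)$. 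Since every Baire subset of $X$ lies in some $\M_\xi(X)$, it follows that $\HP$ carries Baire subsets to Baire subsets.

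The only point that needs genuine care is the on-the-nose set equality $\HP(A)=\Pt(A)\cap\HP(X)$ supplied by Lemma \ref{l2.17}: it is precisely what lets me transfer the complexity estimate from $\Pt(X)$ to its subspace $\HP(X)$ without losing a level of the hierarchy. The trace lemma itself is routine, but I would state it explicitly, because the preservation of the Baire classes under intersection with a subspace is exactly the mechanism that turns the $\Pt$-statement into the $\HP$-statement; everything else is a direct appeal to results already established.
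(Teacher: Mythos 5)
Your proof is correct and follows essentially the same route as the paper, which derives Theorem \ref{t2.30} without further argument from Theorem \ref{t1.22} together with Lemma \ref{l2.17}; your explicit trace lemma (hereditarity of the classes $\M_\xi$ and $\F_\alpha$ under passing to the subspace $\HP(X)\subset\Pt(X)$) simply spells out the routine step that the paper leaves implicit in the phrase ``Theorems \ref{t1.16}---\ref{t1.22}, and also Lemma \ref{l2.17} imply.'' Nothing more is needed.
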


Theorems  \ref{t2.2} and \ref{t2.27} imply

\begin{theorem}\label{t2.31} The functor $\HP$ preserves the class of $p$-paracompact spaces.
\end{theorem}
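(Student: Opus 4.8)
The plan is to unwind the definition of $p$-paracompactness and then simply feed a perfect map into the functor $\HP$, using the two cited results as black boxes. By definition a Tychonoff space $X$ is $p$-paracompact precisely when it is a perfect preimage of a metrizable space, i.e.\ when it admits a perfect map $f\colon X\to M$ onto a metrizable space $M$. So I would begin by fixing, for an arbitrary $p$-paracompact space $X$, such a perfect map $f\colon X\to M$ with $M$ metrizable. This is the same strategy used for the functor $\Pt$ in Theorem \ref{t1.18}.

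Next I would apply the functor to obtain $\HP(f)\colon\HP(X)\to\HP(M)$. Theorem \ref{t2.2} guarantees that this map is again perfect, and Theorem \ref{t2.27} guarantees that $\HP(M)$ is metrizable. Consequently $\HP(X)$ carries a perfect map onto a metrizable space, which is exactly the defining property of $p$-paracompactness; hence $\HP(X)$ is $p$-paracompact, as required. The whole argument is thus a direct composition of the two preservation theorems, and no new estimate or construction is needed.

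The one point I would want to nail down carefully is that the metrizable target is genuinely available even if $\HP(f)$ fails to be surjective. Since perfect maps are closed, the image $\HP(f)(\HP(X))$ is a closed subset of $\HP(M)$, and a closed subspace of a metrizable space is metrizable; restricting $\HP(f)$ to its image therefore still exhibits $\HP(X)$ as a perfect preimage of a metrizable space. This is the only mild subtlety, and it is not a real obstacle: the main thing to watch is matching the formal definition of $p$-paracompactness (a perfect map onto a metrizable space) with the hypotheses and conclusions of Theorems \ref{t2.2} and \ref{t2.27}.
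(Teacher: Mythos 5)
Your proposal is correct and takes essentially the same route as the paper, which proves Theorem \ref{t2.31} precisely by citing Theorem \ref{t2.2} (preservation of perfect maps) together with Theorem \ref{t2.27} (preservation of metrizability). Your additional remark about restricting $\HP(f)$ to its closed (hence metrizable) image when it fails to be surjective is a sensible precaution --- indeed $\HP$ does not preserve surjections (Example \ref{e2.5}) --- which the paper leaves implicit.
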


\begin{theorem}\label{t2.32} The functor $\HP$ preserves projective subsets of metrizable compacta. Furthermore, for every $n\ge 0$, if
$A\in\P_{2n}(X)$, then $\HP(A)\in\P_{2n+1}(P(X))$.
\end{theorem}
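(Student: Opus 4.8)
The plan is to follow the scheme of the proof of Theorem~\ref{t1.23}, but to work with $\HP(A)$ directly instead of passing to its complement. The condition $\mu_*(A)=1$ unfolds as a \emph{countable intersection of existential} conditions, each of which is a single projection; this is exactly the shape that lands in $\P_{2n+1}$. Concretely, I would fix a metrizable compactum $X$ and, for $A\subset X$, identify $\HP(A)$ with $\{\mu\in P(X)\mid \mu_*(A)=1\}\subset P(X)$, as explained after Corollary~\ref{c1.5}. Since every measure on the compact metric space $X$ is Radon, inner regularity gives $\mu_*(A)=\sup\{\mu(K)\mid K\subset A\text{ compact}\}$, so that
$$
\HP(A)=\bigcap_{m=1}^\infty H_m,\qquad
H_m=\{\mu\in P(X)\mid \exists\text{ compact }K\subset A,\ \mu(K)\ge 1-\tfrac1m\}.
$$

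Next I would realize each $H_m$ as a projection. Writing $\pr_1\colon P(X)\times\exp(X)\to P(X)$ for the first projection and reusing the set $R(a)=\{(\mu,K)\mid\mu(K)\ge a\}$, which is \emph{closed} in $P(X)\times\exp(X)$ by the computation inside the proof of Theorem~\ref{t1.23}, one has $H_m=\pr_1(G_m)$, where
$$
G_m=R(1-\tfrac1m)\cap\bigl(P(X)\times\exp(A)\bigr),\qquad
\exp(A)=\{K\in\exp(X)\mid K\subset A\}.
$$

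The counting of projective classes is then routine. Assuming $A\in\P_{2n}(X)$, the hyperspace result \cite{13} already invoked in Theorem~\ref{t1.23} gives $\exp(A)\in\P_{2n}(\exp(X))$, so $P(X)\times\exp(A)=\pr_2^{-1}(\exp(A))\in\P_{2n}(P(X)\times\exp(X))$ as a continuous preimage. Intersecting with the closed (hence $\P_0\subset\P_{2n}$) set $R(1-\tfrac1m)$ keeps us inside $\P_{2n}$, so $G_m\in\P_{2n}(P(X)\times\exp(X))$; its continuous image $H_m=\pr_1(G_m)$ then lies in $\P_{2n+1}(P(X))$ by the very definition of $\P_{2n+1}$. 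Finally $\HP(A)=\bigcap_m H_m\in\P_{2n+1}(P(X))$, because $\P_{2n+1}$ is closed under countable intersections \cite{12}.

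I expect the two load-bearing facts to be imported rather than reproved: that $R(a)$ is closed (established inside Theorem~\ref{t1.23}) and that $\exp$ carries $\P_{2n}$-subsets to $\P_{2n}$-subsets of the hyperspace \cite{13}. The one point that must be arranged carefully is the logical form of $\mu_*(A)=1$: it has to be written as a countable intersection of existential statements, so that each conjunct is a \emph{single} projection lowering the complexity by exactly one quantifier, after which closure of $\P_{2n+1}$ under countable intersections closes the argument. The auxiliary closure properties of the projective pointclasses under finite intersection and continuous preimage are standard \cite{12}.
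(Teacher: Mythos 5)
For $n\ge 1$ your argument coincides with the paper's proof step for step: the same decomposition $\HP(A)=\bigcap_{m=1}^\infty \pr_1(E_m)$ with $E_m=R(1-\frac1m)\cap(P(X)\times\exp(A))$, the same import of the closedness of $R(a)$ from the proof of Theorem~\ref{t1.23}, the same appeal to \cite{13} for $\exp(A)\in\P_{2n}(\exp(X))$, and the same closure properties of the projective classes from \cite{12}. So in the main range the proposal is correct and identical in approach.

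The genuine gap is at $n=0$, which you fold into the uniform projection argument. There the chain requires $\exp(A)\in\P_0(\exp(X))$ for Borel $A$, and this is false: by a classical theorem of Hurewicz, the set $\exp(A)$ of compact subsets of the set $A$ of rational points of $[0,1]$ is coanalytic but not Borel in $\exp([0,1])$, so the hyperspace functor does not preserve the class $\P_0=\mathcal B$. The result of \cite{13} concerns the classes $\P_{2n}$ only for $n\ge 1$ (complements of continuous images, i.e.\ coanalytic-type classes), which is precisely why the paper invokes it only under the hypothesis $n\ge 1$. If you instead use the true estimate $\exp(A)\in\P_2(\exp(X))$ for Borel $A$, your computation yields only $\HP(A)\in\P_3(P(X))$, strictly weaker than the claimed $\HP(A)\in\P_1(P(X))$. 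The paper avoids this by treating $n=0$ separately: for Borel (equivalently Baire, since $X$ is a metrizable compactum) $A$, Theorem~\ref{t2.30} gives directly that $\HP(A)$ is Borel, hence $\HP(A)\in\P_0(P(X))\subset\P_1(P(X))$, which is even stronger than what the statement demands. Your proof is repaired simply by splitting off the case $n=0$ and citing Theorem~\ref{t2.30} there, exactly as the paper does.
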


\begin{proof} Let $X$ be a metric compact. For $n=0$
the statement of the theorem follows from Theorem \ref{t2.30}. By $\exp(X)$ we denote the hyperspace of non-empty closed subsets of $X$, equipped with the Vietoris topology.

Now let $n\ge 1$ and $A\in\P_{2n}(X)$. In this case $\HP(A)=\{\mu\in
P(X)\mid\mu_*(A)=1\}=\{\mu\in P(X)\mid$ for any $m\ge 1$ there exists a compact $K\subset A$ such that $\mu(K)\ge 1-\frac
1m\}=\bigcap_{m=1}^\infty \pr_1(E_m)$, where $E_m=\{(\mu,K)\in
P(X)\times\exp(X)\mid K\subset A$ and $\mu(K)\ge 1-\frac 1m\}$ and
$\pr_1:P(X)\times\exp(X)\to P(X)$, the projection onto the first factor.
Let $\exp(A)=\{C\in \exp(X)\mid C\subset A\}$. In \cite{13} it was proved that
$\exp(A)\in\P_{2n}(\exp(X))$. Then for any $m\in\IN$ \ $E_m=R(1-\frac
1m)\cap (P(X)\times\exp(A))$. Since the set $R(1-\frac 1m)\subset
P(X)\times\exp(X)$ is closed (see the proof of Theorem 1.22), $E_m\in\P_{2n}(P(X)\times\exp(X))$.
Consequently, $\pr_1(E_m)\in\P_{2n+1}(P(X))$ and
$\HP(A)=\bigcap_{m=1}^\infty \pr_1(E_m)\in\P_{2n+1}(P(X))$ \cite[\S38, III,
Theorem 3]{12}.
\end{proof}

\begin{remark}\label{r2.33} If $A$ is an analytic subset of the a compact space $X$, then $\HP(A)\in\P_4(P(X))$. This follows from Theorem \ref{t1.23} and the equality
$\HP(A)=\Pt(A)$, which is the result of the fact that analytic subsets of metric compacta are measurable with respect to any measure.
\end{remark}
\newpage

\end{document}